\documentclass[A4paper,12pt]{article}
\usepackage{fullpage}
\usepackage{amssymb, amsmath, amsthm, amsfonts,comment}
\usepackage{mathtools}
\usepackage{thm-restate}
\usepackage{tikz}
\usepackage{subcaption}

\usepackage[hidelinks]{hyperref}
\usepackage[capitalise]{cleveref}

\usepackage[dvipsnames,table]{xcolor}

\usepackage{enumerate}

\newtheorem{thm}{Theorem}[section]
\newtheorem{lem}[thm]{Lemma}
\newtheorem{fact}[thm]{Fact}
\newtheorem{cor}[thm]{Corollary}
\newtheorem{conj}[thm]{Conjecture}
\newtheorem{prop}[thm]{Proposition}

\newtheorem*{thm*}{Theorem}
\newtheorem*{lem*}{Lemma}
\newtheorem*{fact*}{Fact}
\newtheorem*{cor*}{Corollary}
\newtheorem*{conj*}{Conjecture}

\newtheorem{claim}{Claim}[thm]
\newtheorem*{claim*}{Claim}

\theoremstyle{definition}

\newtheorem{define}[thm]{Definition}
\newtheorem{defs}[thm]{Definitions}

\newtheorem*{define*}{Definition}
\newtheorem*{examp*}{Example}

\newtheorem{setup}[thm]{Setup}

\newcommand{\eps}{\varepsilon}

\newcommand*{\claimproofname}{Proof of Claim}
\newenvironment{claimproof}[1][\claimproofname]{\begin{proof}[#1]}{\end{proof}}

\title{A minimum-degree threshold for colour-biased Hamilton cycles in hypergraphs}

\author{Natalie Behague\thanks{School of Mathematical Sciences, Dublin City University, Dublin, Ireland. \\ \indent\indent \hspace{-0.185cm} email: \texttt{ natalie.behague@dcu.ie}. } \and Felix Christian Clemen\thanks{Department of Mathematics and Statistics, University of Victoria, Victoria, B.C., Canada.}$\text{ }^{,}$\thanks{email: \texttt{fclemen@uvic.ca}.} \and Joseph Hyde\thanks{School of Mathematics, Watson Building, University of Birmingham, Edgbaston, Birmingham, B15 2TT. \\ \indent\indent \hspace{-0.185cm}  email: \texttt{ j.f.hyde@bham.ac.uk}.} \and Natasha Morrison\footnotemark[2]$\text{ }^{,}$\thanks{email: \texttt{ nmorrison@uvic.ca}.}}

\begin{document}

\maketitle 
\begin{abstract}
We determine the asymptotically best possible minimum vertex degree condition forcing a two-coloured $3$-graph to contain a colour-biased tight Hamilton cycle. This confirms a conjecture of H\`an, Lang, Marciano, Pavez-Sign\'e, Sanhueza-Matamala, Treglown and Z\'arate-Guer\'en. 
\end{abstract}

\section{Introduction}
In recent years, increasing attention has been given to \textit{colour-bias} problems, which lie within discrepancy theory. Broadly speaking, discrepancy theory examines how far a given system must deviate from perfect uniformity. In this sense, it can be viewed as a counterpart to Ramsey theory: whereas Ramsey theory demonstrates that structured patterns inevitably emerge within disorder, discrepancy theory shows when some amount of imbalance is unavoidable.

Roughly speaking, colour-bias problems concern conditions under which a (hyper)graph $H$ coloured with $r \geq 2$ colours always contains a given substructure $C$ with (significantly) more than $|E(C)|/r$ edges in some colour $c \in [r]$. Note that when all copies of $C$ are (close to) disjoint, there exists a colouring where each copy of $C$ essentially uses all colours equally often. So in order to ensure a colour-bias on some copy of $C$, the conditions on $H$ must guarantee there are many intersecting copies of $C$ in $H$. 

Questions on colour-bias were first investigated in the setting of graphs; see, for example,~\cite{BaloghCsabaJingPluhar2020, BaloghCsabaPluharTreglown2021,Bradac2022,BradaChristophGishboliner2024,FreschiHydeLadaTreglown2021,GishbolinerKrivelevichMichaeli2022b,GishbolinerKrivelevichMichaeli2022}. More recently, attention has shifted to hypergraphs, with a particular focus on conditions for obtaining colour-biased Hamilton cycles and perfect matchings. 
For $k \in \mathbb{N}$, let $H$ be a $k$-uniform hypergraph ($k$-graph) with vertex set $V(H)$.
For $1\leq \ell \leq k$, the \emph{minimum $\ell$-degree} of $H$ is defined to be $\delta_\ell(H) := \min_{S \subseteq V(H); |S| = \ell}\bigl|\{\, e \in E(H) : S \subset e \,\}\bigr|$. 
For $\ell = k-1$, $\delta_{k-1}(H)$ is often referred to as the minimum {\it codegree of $H$} while $\delta_1(H)$ is often referred to as the minimum {\it vertex degree of $H$}.

In this paper, we focus on minimum vertex degree conditions for colour-biased \emph{tight Hamilton cycles}. A tight Hamilton cycle in a $k$-graph $H$ on $n$ vertices is a cyclic ordering 
$v_1, v_2, \ldots, v_n$ of the vertices such that every consecutive $k$-tuple $\{v_i, v_{i+1}, \ldots, v_{i+k-1}\}$
(with indices taken modulo $n$) forms an edge of $H$.

Before turning to the coloured setting, we briefly discuss some relevant extremal thresholds. Building on a number of earlier results (see \cite{GlebovPersonWeps2012,RodlRucinski2014,RodlRucinskiSchachtSzemeredi2017}), Reiher, R\"odl, Ruci\'nski, Schacht, and Szemer\'edi~\cite{3hamcycle} gave an asymptotically best possible condition for a $3$-graph to contain a tight Hamilton cycle. 
\begin{thm}[Reiher, R\"odl, Ruci\'nski, Schacht, and Szemer\'edi~\cite{3hamcycle}]\label{thm:hamcyclenocolour}
For every $\alpha>0$ there exists an integer $n_0$ such that every $3$-graph $H$ on $n\geq n_0$ vertices with minimum vertex degree $\delta_1(H) \ge \left(\frac{5}{9} + \alpha\right) \binom{n}{2}$
contains a tight Hamilton cycle. 
\end{thm}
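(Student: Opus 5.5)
This is the tight Hamilton cycle theorem of Reiher, R\"odl, Ruci\'nski, Schacht and Szemer\'edi, which the paper cites rather than proves. A self-contained proof would follow the \emph{absorbing method} combined with hypergraph regularity. There are three components: an absorbing path, a reservoir set for connections, and an almost spanning cover by tight paths. These are then linked into a cycle.

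\textbf{Connectivity.} First I would establish a connecting lemma. Call two ordered pairs (two consecutive vertices at the end of a tight path) \emph{connected} if some short tight path joins them. I would show that $\delta_1(H)\ge(5/9+\alpha)\binom n2$ forces the following. There is a large family of pairs of positive link degree such that any two of them are joined by $\Omega(n^{L})$ tight paths of bounded length $L$. The key extremal input is that the link graphs $L(v)$ of vertices, each with more than $\frac59\binom n2$ edges, must share edges in a way that makes the ``tight component'' structure essentially unique.

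The threshold $5/9$ enters here. The extremal example (partition $V=A\cup B$ with $|A|=n/3$, taking the edges with exactly $0$ or $2$ vertices in $A$) has density $5/9$ and no tight Hamilton cycle. I expect the main work to be a case analysis of the link graphs showing that density above $5/9$ prevents this parity obstruction.

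\textbf{Absorption and reservoir.} Next I would build an absorbing tight path $P_A$ of length $\le\beta n$. Every set $S$ of $\le\beta^2 n$ vertices can be absorbed into it, i.e.\ there is a tight path with the same end-pairs covering $V(P_A)\cup S$. For a single vertex $v$, an absorber is a short tight path $x_1x_2x_3x_4$ such that $x_1x_2vx_3x_4$ is also tight. Counting these uses that the link of $v$ and the pairs $x_2x_3$ overlap, which holds with room to spare at density $5/9$. A random selection with Chernoff bounds, plus the connecting lemma, strings these absorbers into $P_A$. I would also set aside a small random reservoir $R$ so that connections can later be made through $R$ alone.

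\textbf{Almost cover.} Applying the weak hypergraph regularity lemma to $H\setminus(P_A\cup R)$ gives a reduced $3$-graph that inherits the vertex-degree condition, up to error. I would show that the reduced graph contains an almost perfect fractional matching inside one tight component that is also ``non-bipartite''. The degree condition must rule out the space barrier for matchings, and $5/9>1-(2/3)^{\dots}$ suffices, as in the Han--Zhao perfect matching thresholds. The regularity blow-up then turns each matching edge into a long tight path covering almost all of its cluster triple.

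\textbf{Closing the cycle.} Finally, I would connect these paths and $P_A$ through $R$ into one tight cycle and absorb the leftover vertices. I expect the main obstacle to be the connectivity step: proving that all relevant pairs lie in a single tight component whose reduced structure supports a fractional matching. This is exactly where the $5/9$ bound is sharp.
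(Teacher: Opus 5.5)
The paper does not prove this theorem; it cites Reiher, R\"odl, Ruci\'nski, Schacht and Szemer\'edi. Your skeleton (connecting lemma, reservoir, absorbing path, regularity-based almost cover, closing up) is the architecture of that proof, and its components are reused in Section~\ref{sec:manys}. But the mathematics you commit to aims at the wrong obstruction.

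Your construction (edges with exactly $0$ or $2$ vertices in $A$, $|A|=n/3$) does not have minimum degree $\frac59\binom n2$. A vertex of $A$ lies only in edges of type $AAB$, so its link has about $\frac29n^2\approx\frac49\binom n2$ edges. With $|A|=an$, the links of vertices in $A$ and in $B$ have about $2a(1-a)\binom n2$ and $(a^2+(1-a)^2)\binom n2$ edges. These sum to $\binom n2$, so parity obstructions of this kind are already impossible above $\frac12$.

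What is sharp at $\frac59$ on the connectivity side is a different example. Take $|B|=n/3$, all triples inside $A$, and all triples with at least two vertices in $B$. Here $\delta_1\approx\frac59\binom n2$, and any two link graphs share edges (as they must, since $\frac59+\frac59>1$). Yet there are two tight components, and neither carries a tight Hamilton cycle: the first misses $B$, and the second would need $3|B|\ge 2n$. So ``links share edges'' cannot be your key input. In the cited proof, connectivity goes through robust subgraphs $R_v$ and connectable pairs. This example violates exactly the bound $|V(R_v)|>\frac23n$ that Proposition~\ref{prop:robust_alt} gives for $d=\frac59+\alpha$: for $a\in A$, $L_a$ is a clique on $A$ plus a clique on $B$.

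Likewise, in the space barrier $\frac59=1-(\frac23)^2$ exactly. So your ``$\frac59>1-(\frac23)^{\dots}$'' is an equality, and only the $+\alpha$ beats it.

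The absorbing step also rests on a false claim. For $x_1x_2x_3x_4$ to absorb $v$ you need $x_1x_2x_3x_4$ to be a path in $L_v$ and also $x_1x_2x_3,x_2x_3x_4\in E(H)$. This is a genuinely $3$-uniform condition, and it does not hold ``with room to spare'' at $\frac59$. Let $V=\{v\}\cup A\cup B$ with $|A|=n/3$, let $H-v$ consist of all triples meeting both $A$ and $B$, and let $L_v$ be all pairs inside $A$ or inside $B$. Then $\delta_1(H)=(\frac59-o(1))\binom n2$, yet $v$ has no absorber of your form. Indeed, $x_1x_2,x_2x_3\in L_v$ puts $x_1,x_2,x_3$ in one part, so $x_1x_2x_3\notin E(H)$.

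So showing that every vertex has $\Omega(n^4)$ absorbers needs either an argument that uses the slack $\alpha$ essentially, or more flexible absorbers. Together with the connecting lemma and the almost-cover lemma, this is the real content of the theorem, and your proposal asserts all three rather than proving them. A smaller slip: tight paths with $L$ edges between two fixed end-pairs have only $L-2$ free vertices, so there are at most $n^{L-2}$ of them, not $\Omega(n^L)$.
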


The corresponding perfect matching threshold was first studied by H\`an, Person, and Schacht~\cite{HPS09}, who obtained asymptotic bounds, and later determined precisely by K\"uhn, Osthus, and Treglown~\cite{KOT10}. Theorem~\ref{thm:hamcyclenocolour} shows that, asymptotically, the minimum vertex-degree thresholds for perfect matchings and tight Hamilton cycles in $3$-graphs coincide. One might expect a similar phenomenon for 4-graphs. However, the thresholds do not coincide there, as shown by a construction outlined in~\cite[Section 5.2]{color-bias25}. 

Returning to the coloured setting, minimum vertex-degree conditions for colour-biased perfect matchings have attracted attention in the past few years~\cite{BTZ24, color-bias25, LMX26}. Most recently, asymptotically tight minimum vertex degree conditions for colour-biased perfect matchings have been established by H\`an,  Lang, Marciano, Pavez-Sign\'e,  Sanhueza-Matamala, Treglown and Z\'arate-Guer\'en~\cite{color-bias25}. They also
conjectured~\cite[Conjecture 5.1]{color-bias25} that in 3-graphs edge-coloured with 2 colours, the two thresholds should coincide, at least asymptotically. We prove this conjecture.

\begin{restatable}{thm}{main}
\label{thm:main}
    For all $\alpha > 0$ there exist $\delta, n_0 > 0$ such that the following holds. Let $H$ be a red/blue coloured 3-graph on $n \geq n_0$ vertices with $\delta_1(H) \geq (\frac{3}{4} + \alpha)\binom{n}{2}$. Then $H$ contains a tight Hamilton cycle with at least $(\frac{1}{2} + \delta)n$ edges of the same colour.
\end{restatable}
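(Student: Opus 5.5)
We will use the absorption method, as in the proof of Theorem~\ref{thm:hamcyclenocolour}, but where the object to be absorbed is a colour imbalance rather than a set of leftover vertices. The plan has three ingredients.

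First, a structural dichotomy on the colouring. Call the colouring \emph{balanced-rigid} if every tight walk of bounded length between two fixed ordered pairs uses essentially a fixed number of red edges. This fixed number is determined by a potential function: one looks for $\phi$ on ordered pairs (or on vertices) such that the colour of an edge $xyz$ equals $\phi(x,y)-\phi(y,z)$ plus a constant, modulo the relevant parameters. We aim to prove the following.
\begin{enumerate}[(i)]
\item If $\delta_1(H)\ge(\frac34+\alpha)\binom n2$ and the colouring is not $\eps$-close to such a ``gradient'' structure, then $H$ contains many (at least $\Omega(n^{c})$ for a suitable constant $c$) vertex-disjoint \emph{switchers}. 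A switcher is a pair of tight paths with the same end pairs and the same vertex set whose numbers of red edges differ by at least one.
\item If the colouring is close to a gradient structure, then the degree condition forces a direct contradiction. The extremal colourings at $\frac34$ (for example, colouring by parity of the number of vertices in a half $A$) are exactly those in which every Hamilton cycle is balanced. Above $\frac34$ there are enough edges of the ``wrong'' type to build switchers anyway.
\end{enumerate}
We expect to adapt the corresponding structural lemma for colour-biased perfect matchings from~\cite{color-bias25}, since at the same threshold their extremal examples are the obstructions here as well.

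Second, the Hamilton cycle construction. Following~\cite{3hamcycle}, we find a small absorbing path $P_A$ and a reservoir, and we cover almost all remaining vertices by tight paths found in a regular partition. Here the reduced $3$-graph inherits minimum vertex degree above $\frac34$, which is well above the $\frac59$ needed for connectivity and almost-cover. We then connect everything into a tight Hamilton cycle $C$ via the reservoir and absorb the leftover vertices with $P_A$. Before this step we set aside $\delta' n$ of the disjoint switchers from the first part, each given as a short tight path segment, and link them into the cycle. Flipping a switcher changes the red count by a nonzero amount while keeping the cycle Hamiltonian.

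Third, the final counting step. Suppose $C$ has red count $r$ with $|r-n/2|<\delta n$. Flipping every switcher in a suitable direction (all in the direction that increases red, or all in the direction that decreases it) shifts the count monotonically by at least $\delta' n$. So at least one of the two extreme choices yields at least $(\frac12+\delta)n$ edges of one colour, provided $\delta\le\delta'/2$.

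The main obstacle is the first ingredient: showing that $\delta_1>(\frac34+\alpha)\binom n2$ guarantees linearly many disjoint switchers, or else a structure that itself yields a biased cycle. We would first try the following. Take a link graph $L(v)$, which has density above $\frac34$ and hence contains many triangles and odd structures. Consider short tight walks $xy\to\cdots\to xy$ through vertices $v$ in $L(x)\cap L(y)$, and show that the ``holonomy'' (red minus expected red around closed tight walks) is nonzero for many configurations. Otherwise the colouring would be globally a gradient, and we would then check directly, using the vertex degree, that every gradient colouring of such an $H$ is $o(n^3)$-close to one of the extremal $\frac34$-constructions, which contradicts the additive $\alpha$. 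Finally, a greedy argument together with the positive density turns many switchers into $\Omega(n)$ vertex-disjoint ones.
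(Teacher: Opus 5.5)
Your architecture matches the paper's: switchers threaded into a Hamilton cycle built with the absorbing and reservoir machinery of \cite{3hamcycle}, plus a structure theorem for when switchers are scarce. The structural half, however, is wrong as stated, and the key step is left unproven.

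\textbf{The invariant is misidentified.} Having no switchers does not make a colouring balanced-rigid. It only forces the colour sum of a tight path to be determined by its end pairs \emph{together with its vertex set}. What the paper extracts (Lemma~\ref{lem:key_lemma_alternative}) is an additive vertex potential: $c(uxy)=P(u)+P(x)+P(y)+c$ with $P:V(H)\to\{\pm1\}$ and $c\in\{-2,0,+2\}$, valid for all but $\gamma n^3$ robust edges. Your ordered-pair gradient does not capture this. If it must hold for every order in which a path may traverse an edge, it collapses: summing $c(xyz)=\phi(x,y)-\phi(y,z)+k$ over the three cyclic rotations gives $c(xyz)=k$, so the colouring is monochromatic.

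\textbf{Both (i) and (ii) fail.} Let $|Y|=n/10$ and $X=V\setminus Y$. Let $H$ consist of all triples meeting $Y$ in at most one vertex, coloured red inside $X$ and blue otherwise.
\begin{enumerate}[(a)]
\item $\delta_1(H)=(0.81-o(1))\binom n2$.
\item $H$ contains no switcher at all, since the potential above holds exactly with $c=-2$.
\item For $x_i\in X$ and $y\in Y$, the paths $x_1x_2x_3x_4x_5x_6x_7$ and $x_1x_2x_3yx_5x_6x_7$ have the same end pairs and the same length, but $5$ versus $2$ red edges.
\end{enumerate}
So $H$ is far from balanced-rigid yet switcher-free, and (i) fails. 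Claim (ii), and your closing claim that structured colourings are close to the $\frac34$-construction, fail already for monochromatic colourings. There the structure does not contradict the degree condition; it directly yields a biased cycle.

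\textbf{How the paper handles the structured case.} The endgame must split into cases.
\begin{enumerate}[(a)]
\item In Cases A and C, the paper picks $y$ in the minority side incident to few bad robust edges. It compares the lower bound on $e(R_y)$ from Proposition~\ref{prop:robust} with the upper bound $|X_y|^2/2+O(\gamma n^2)$, which shows that side has at most $0.15n$ vertices. Then any tight Hamilton cycle inside the robust edges obeying the structure (found by Theorem~\ref{thm:hamcyclenocolour}) has at most $0.46n$ minority-colour edges.
\item Only Case B is excluded by the degree condition, via $ab+\frac{a^2}{2}+\frac{(1-a-b)^2}{2}\le\frac38$.
\end{enumerate}

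\textbf{Connectability is missing.} This is precisely what makes the vertex-degree setting hard. Many pairs may lie in few or no edges, so an arbitrary short tight path cannot be linked into the cycle. Lemmas~\ref{lem:connecting} and~\ref{lem:reservoir} only join $\zeta$-connectable pairs, which are defined through the robust subgraphs $R_v$ of the link graphs. Hence the switchers you set aside must have connectable end pairs. The structure theorem must then be proved under the weaker hypothesis that there are no \emph{$\zeta$-connectable} bounded switchers. This is why its conclusion concerns only robust edges. It is also why the paper needs the following to pass from the local relation $c(uxy)-c(vxy)=P(u)-P(v)$ to a global one:
\begin{enumerate}[(a)]
\item extendable vertices;
\item the agreeable-component analysis (Lemmas~\ref{annoying} and~\ref{lem:c_uv});
\item long switchers with the rotation trick (Lemma~\ref{lem:colours_are_consistent});
\item Lemma~\ref{lem:anotherconnecting}.
\end{enumerate}
Your holonomy sketch and the hope of adapting the perfect-matching lemma of \cite{color-bias25} address none of this, so the key step remains unproven.

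\textbf{A smaller point.} No stability statement is needed to obtain $\Omega(n)$ disjoint switchers. The paper takes a maximal disjoint family of $\zeta$-connectable $7\ell$-bounded switchers. If it has fewer than $4\delta n$ members, deleting its at most $28\ell\delta n$ vertices leaves a switcher-free $H'$, to which Lemma~\ref{lem:key} applies.
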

The degree condition in Theorem~\ref{thm:main} is asymptotically best possible. To see this, let $V = V_1 \sqcup V_2$ be a partition of the vertex set with 
$||V_1|-|V_2||\leq 1$. 
Define a $3$-graph $H$ whose edge set consists of all triples that intersect both $V_1$ and $V_2$. 
Colour an edge red if it contains exactly two vertices in $V_1$, and blue if it contains exactly one vertex in $V_1$ (see Figure~\ref{fig:extremal}). 
This construction satisfies $\delta_1(H)\geq \binom{n-1}{2} - \max_{i=1,2} \binom{|V_i|-1}{2} \ge  \frac{3}{4}\binom{n-1}{2} +\frac{n-1}{8}$, but every tight Hamilton cycle yields a difference of at most a constant between its numbers of red and blue edges. 
\begin{figure}[h!]
\begin{center}
\tikzset{
vtx/.style={inner sep=1.1pt, outer sep=0pt, circle, fill,draw}, 
hyperedge/.style={fill=pink,opacity=0.5,draw=black}, 
vtxBig/.style={inner sep=12pt, outer sep=0pt, circle, fill=white,draw}, 
hyperedge/.style={fill=red,opacity=0.5,draw=black}, 
hyperedge2/.style={fill=blue,opacity=0.5,draw=black}, 
}
\begin{tikzpicture}[scale=2.0]
\draw (0,0) coordinate(x1) ellipse (0.25cm and 0.7cm);
\draw (1,0) coordinate(x1) ellipse (0.25cm and 0.7cm);
\draw
(0,-0.2) coordinate(1) 
(0,-0.4) coordinate(2) 
(0,0.3) coordinate(3) 
(1,0.2) coordinate(4) 
(1,-0.3) coordinate(5) 
(1,0.4) coordinate(6) 
;
\draw[hyperedge] (1) to[bend left] (2) to[bend left=5] (5) to[bend left=5] (1);
\draw[hyperedge2] (4) to[bend left] (6) to[bend left=5] (3) to[bend left=5] (4);
\draw (0,0) node{$V_1$};
\draw (1,-0) node{$V_2$};
\end{tikzpicture}
\end{center}
\caption{Example showing the minimum vertex degree condition in Theorem~\ref{thm:main} is asymptotically best possible.}\label{fig:extremal}
\end{figure}

The proof of Theorem~\ref{thm:main} is based on the concept of \textit{switchers}, pairs of collections of tight paths on the same vertex set whose red-blue edge counts differ.  If there are few switchers, then our key lemma, Lemma~\ref{lem:key}, yields a strong global characterisation of the colouring. If there are many switchers, we build a cycle containing them, and utilise their `switching' property to obtain the required colour-bias. In the minimum vertex degree setting, many pairs of vertices may fail to lie in any edge, and consequently our switchers cannot be connected by simple local extension arguments. Given this, we can only utilise switchers that satisfy a global connectivity property. This means, in contrast to previous literature on colour-bias problems, we require additional technical arguments to overcome these barriers.



Working with codegree conditions is more straightforward, as if every pair of vertices is contained in many edges it is easy to extend any tight path by another edge. Indeed, analogous results exist when one considers codegree, as opposed to minimum vertex degree. Results establishing codegree conditions that guarantee tight Hamilton cycles were obtained in~\cite{RodlRucinskiSzemeredi2006,RodlRucinskiSzemeredi2011}. More recently, Gishboliner, Glock, and Sgueglia~\cite{gishboliner2025tight} determined asymptotically optimal minimum codegree (as opposed to minimum vertex degree) conditions for colour-biased tight Hamilton cycles in $r$-edge-coloured $k$-uniform hypergraphs for all $r \ge 2$ and $k \ge 3$. 

The paper is organised as follows. In Section~\ref{sec:Not} we introduce the notation used throughout. In Section~\ref{sec:proofoutline} we provide a more detailed proof outline, expanding upon the brief sketch given above. In Section~\ref{sec:manys} we treat the case where there are many switchers, while in Section~\ref{sec:key_lemma} we consider the case with no switchers. In Section~\ref{sec:fews} we complete the proof by handling the regime with few switchers. Finally, in Section~\ref{sec:concl} we conclude with some natural directions for future research.

\section{Definitions and proof outline}

\subsection{Notation and basic definitions}
\label{sec:Not}

A \emph{tight path} in a $k$-graph $H$ on $t$ vertices is an ordering 
$v_1, v_2, \ldots, v_t$ of the vertices such that every consecutive $k$-tuple $\{v_i, v_{i+1}, \ldots, v_{i+k-1}\}$
 forms an edge of $H$. We say that a (tight) path has \emph{length $\ell$} for some $\ell \in \mathbb{N}$ if the path is on $\ell$ edges. For a 2-graph $G$ and distinct vertices $x,y \in V(G)$, an \emph{$x$-$y$-path} is a path with endpoints $x$ and $y$. Similarly, for a $3$-graph $H$ and distinct pairs of vertices $(x,y)$ and $(z,w)$ - that is, $x \neq y \neq z \neq w$ - an \emph{$(x, y)$---$(z, w)$-path} is a tight path beginning with vertices $x$ and $y$ and ending with vertices $z$ and $w$. That is, $x$ and $w$ are the only vertices of vertex degree one in the tight path and $y$ and $z$ are the only vertices of vertex degree two in the tight path. For a set of vertices $U$ in $G$, we define $N(U) := \{v \in V(G)\setminus U: \exists u \in U \text{ s.t. } vu \in E(G)\}$.

It will be useful to associate a value to each colour so that we can find the difference in the number of edges of each colour just by summing the values.
\begin{define}
For an edge $e$ in $H$ let
\[
c(e) =
\begin{cases}
    +1 & \text{if $e$ is red}, \\
    -1 & \text{if $e$ is blue},
\end{cases}
\]
and for any set of edges $E$, let 
\[c(E) \coloneqq \sum_{e \in E} c(e)\]
which we call the \emph{edge colour sum of $E$}.
\end{define}
Observe that a  tight Hamilton cycle $C$ has at least $(\frac{1}{2} + \delta)n$ edges of the same colour if and only if $|c(C)| \geq 2\delta n$.

\begin{define}
    Let $H = (V,E)$ be a 3-graph and $v \in V$. We define the \emph{link graph $L_v$ of $v$} to be the 2-graph with $V(L_v) := V$ and $E(L_v) := \{yz : vyz \in E(H)\}$.
\end{define} 

Finally, we use the standard hierarchy notation $a \ll b$ to denote that there exists a non-decreasing function $f: (0, 1] \to (0,1]$ such that $f(a) \leq b$.

\subsection{Proof outline}\label{sec:proofoutline}

In this section, we give a more detailed outline of the proof of Theorem~\ref{thm:main}.

Recall from the previous section that the proof utilises `switchers', which consist of pairs of collections of tight paths on the same vertex set with differing colour counts. It is vital for us to be able to connect and extend these via tight paths. Having no minimum codegree condition means that we cannot hope to extend tight paths in a greedy way, that is, appending one edge at a time. We overcome this limitation by exploiting our minimum vertex degree condition to find large highly connected components of the link graphs of each vertex. Since these `robust' components are large, they overlap significantly, providing us with pairs of vertices contained in many of these components. These `well-connected' pairs of vertices will help us to extend and join together tight paths as well as switchers.

\begin{define}[$(\beta,\ell)$-robust {\cite[Definition 2.2]{3hamcycle}}]
       Let $\beta>0$ and $\ell$ be a positive integer. A 2-graph $R$ is said to be $(\beta,\ell)$-robust if for any two distinct vertices $x,y \in V(R)$ the number of $x$-$y$-paths in $R$ of length $\ell$ is at least $\beta|V(R)|^{\ell-1}$.
\end{define}
The following proposition asserts that each link graph contains a robust subgraph with many vertices and edges. Since its proof is similar to that of \cite[Prop 2.3]{3hamcycle}, we defer it to Appendix~\ref{app:robust}, where we prove a more general statement. 

\begin{restatable}[cf.~{\cite[Prop.~2.3]{3hamcycle}}]{prop}{robust}
\label{prop:robust} Let $\alpha > 0$ and let $H$ be a 3-graph on $n$ vertices, where $n$ is sufficiently large, satisfying $\delta_1(H) \geq (\frac{3}{4} + \alpha)\binom{n}{2}$. There exist $\beta>0$ and an odd integer $\ell\geq 3$ such that the following holds. For every $v \in V(H)$, there exists an induced subgraph $R_v\subseteq L_v$ satisfying 
\begin{itemize}
\item[(i)] $|V(R_v)|\geq \left(\frac{2+\sqrt{2}}{4} +\frac{\alpha}{2}\right)n \ge \left(0.853 +\frac{\alpha}{2}\right)n$,
\item[(ii)] $e_L(V(R_v),V\setminus V(R_v))\leq \alpha n^2/4$ and $e(R_v)\geq \left(\frac{3}{4}+\frac{\alpha}{2}\right)\frac{n^2}{2}-\frac{(n-|V(R_v)|)^2}{2}$,
\item[(iii)] $R_v$ is $(\beta,\ell)$-robust. 
\end{itemize}
\end{restatable}
We may refer to $R_v$ as a \emph{robust} subgraph. Given~\cref{prop:robust}, we introduce the following `setup' for brevity.
\begin{setup} \label{setup} In what follows, 
fix $\alpha > 0$ and take $n_0 = n_0(\alpha)$ sufficiently large. Let $H$ be a red/blue edge-coloured 3-graph  on $n \ge n_0$ vertices with $\delta_1(H) \geq (\frac{3}{4} + \alpha)\binom{n}{2}$, as in the statement of Theorem~\ref{thm:main}. Further, take $\beta > 0$, an odd integer $\ell$ and graphs $R_v \subseteq L_v$ for $v \in V(H)$ as given by Proposition~\ref{prop:robust} with inputs $\alpha$ and $H$. Moreover, define constants $\gamma, \zeta$ and $\delta$ using the following hierarchy, choosing from right to left:\begin{equation}\label{eq:hierarchy} 
\frac{1}{n_0} \ll \delta \ll \zeta \ll \beta, \frac{1}{\ell}, \gamma \ll \alpha.
\end{equation}

\end{setup}

Let us now continue with the following important notion for connecting up tight paths into a tight Hamilton cycle, which formalises our intuitive idea of `well-connected' pairs. 

\begin{define}[$\rho$-connectable]\label{def:connectable}
    For a constant $\rho > 0$, a pair $xy$ of vertices in $V$ is said to be $\rho$-connectable if the set $$U_{xy} = \{v \in V: xy \in E(R_v)\}$$ of all vertices $v$ having $xy$ as an edge of their robust subgraph has size $|U_{xy}| \geq \rho|V|$.
\end{define}

We now formally define the notion of a \emph{switcher}.

\begin{defs}[Switchers]
A \emph{potential switcher} in a red-blue coloured $3$-graph $H$ is a pair $(\mathcal{S}, \mathcal{S}')$ of collections of vertex disjoint tight paths $\mathcal{S} = \{P_1,\ldots,P_t\}$ and $\mathcal{S}' = \{P'_1,\ldots,P'_t\}$  (for some $t \in \mathbb{N}$) such that $V(\bigcup \mathcal{S}) = V(\bigcup \mathcal{S}')$, and such that both $P_i$ and  $P_i'$ have the same initial and final pairs of vertices for all $1 \le i \le t$.

A potential switcher is a \emph{switcher} (also called a \emph{colour-changing switcher}) if the edge colour sums of these collections of tight paths are not equal, that is $c(\mathcal{S}) \ne c(\mathcal{S}')$.

For a constant $\zeta > 0$, a (potential) switcher is called \emph{$\zeta$-connectable} if the initial and final pairs of all paths (in $\mathcal{S} \cup \mathcal{S}'$) are $\zeta$-connectable, and is called \emph{$h$-bounded} if $|V(\bigcup \mathcal{S})| \leq h$, for an integer $h > 0$. 

We say a set of switchers is \emph{disjoint} if their vertex sets are pairwise disjoint. 
\end{defs}

\bigskip

For the proof of Theorem~\ref{thm:main}, given in Section~\ref{sec:fews}, we conclude differently depending on whether there are `few' or `many' $\zeta$-connectable switchers. When there are few switchers, we delete them and apply our Key Lemma (Lemma~\ref{lem:key} below) to the resulting switcher-free 3-graph. In order to state this, we need the following definition.

\begin{define}[Robust edges]
    An edge $xyz$ in $H$ is said to be \emph{robust} if at least one of the following holds: $$xy \in E(R_z), \ xz \in E(R_y), \ yz \in E(R_x).$$ 
\end{define}

\begin{restatable}[Key Lemma]{lem}{key}
\label{lem:key}
   Given Setup \ref{setup}, suppose that $H$ contains no $\zeta$-connectable $7\ell$-bounded switchers. Then there exists a bipartition of the vertex set of $H$ into (possibly empty) parts $X$ and $Y$ such that one of the following cases holds. 
    \begin{enumerate}[{Case} A.]
        \item For all but at most $\gamma n^3$ robust edges $e$, we have $|e\cap Y| \in \{0,1\}$ such that
        \begin{itemize}
            \item if $|e\cap Y| = 0$ then $e$ is red,
            \item if  $|e\cap Y| = 1$ then $e$ is blue.
        \end{itemize}
        \item For all but at most $\gamma n^3$ robust edges $e$, we have $|e\cap Y| \in \{1,2\}$ such that
        \begin{itemize}
            \item if $|e\cap Y| = 1$ then $e$ is red,
            \item if  $|e\cap Y| = 2$ then $e$ is blue.
        \end{itemize}
       \item For all but at most $\gamma n^3$ robust edges $e$, we have $|e\cap Y| \in \{2,3\}$ such that
        \begin{itemize}
            \item if $|e\cap Y| = 2$ then $e$ is red,
            \item if  $|e\cap Y| = 3$ then $e$ is blue.
        \end{itemize}
    \end{enumerate}
\end{restatable}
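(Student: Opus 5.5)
We plan to extract a global $\pm$-structure from the local consistency that the absence of switchers forces. The basic gadget is a small potential switcher. Take two tight paths on the same vertex set with the same end pairs, for instance $a,b,x,y,c,d$ versus $a,b,y,x,c,d$, or more generally orderings that swap two interior vertices. Their edge colour sums must agree whenever their end pairs are $\zeta$-connectable. Such paths can be built inside link graphs using the $(\beta,\ell)$-robustness of the $R_v$ from Proposition~\ref{prop:robust}: a path $u_1u_2\dots u_{\ell+1}$ in $R_v$ gives tight-path pieces through $v$, and the bound $7\ell$ leaves room for a few such pieces. Applying this to a swap gadget yields a linear relation among the colours of a bounded number of robust edges, typically of the form
\[
c(vxy)-c(uxy)=c(vx'y')-c(ux'y'),
\]
valid for most choices. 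The first step is to establish, by counting with robustness and using that $|V(R_v)|\ge 0.853n$ (so that most pairs are $\zeta$-connectable, since $\sum_v e(R_v)\ge (3/8)n^3$ and $\zeta\ll\beta$), that for all but $\gamma n^3$ robust edges the colour depends on the edge in an ``additive'' way. Concretely, we aim to show that there is a function $f:V\to\{0,1\}$ and constants such that $c(xyz)$ is determined by $f(x)+f(y)+f(z)$ on almost all robust edges.

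The second step produces $f$. Fix a typical vertex $v$ and look at its robust link $R_v$, whose edges are coloured by $c(v\cdot\cdot)$. The no-switcher relations for paths inside $R_v$, of the form $v,x,y,\dots$ versus a rerouting, show that colour changes along $R_v$ behave like a vertex potential. So the colouring of $R_v$ is, up to $o(n^2)$ edges, either monochromatic or ``bipartite-type'', with colour determined by $|\{x,y\}\cap Y_v|$. Robustness (connectivity with many short paths, and odd $\ell$ preventing a pure bipartite structure) then forces the colour to be an affine function of $|\{x,y\}\cap Y_v|$ taking only two values. Because the sets $V(R_v)$ overlap heavily, consistency of these structures across different $v$ forces the $Y_v$ to agree (up to complement and $o(n)$ errors) with a single global $Y$. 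We then set $X=V\setminus Y$.

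The third step pins down the cases. The colour of a robust edge is a function of $j=|e\cap Y|\in\{0,1,2,3\}$. This function is affine mod the two colours and is realised on three consecutive values only if at most two values of $j$ occur among almost all robust edges. If three consecutive values of $j$ occurred with positive density, chaining them through a switcher would give either a colour contradiction (non-monotone pattern) or a genuine switcher. Hence almost all robust edges have $j$ in one of $\{0,1\},\{1,2\},\{2,3\}$. Swapping the roles of red and blue, or of $X$ and $Y$, normalises these into Cases A, B and C.

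We expect the main obstacle to be the second step. The difficulty is turning the local no-switcher constraints, which are only available for $\zeta$-connectable end pairs and bounded length, into a single global partition with only $\gamma n^3$ exceptional robust edges. This needs careful counting of path gadgets via robustness and a stability-type argument that almost consistent local labelings glue together. Our first attempt would be to define $Y$ by majority vote over $v$ and bound the defects by double counting gadgets.
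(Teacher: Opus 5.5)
\textbf{Verdict: genuine gap.} Your Step 2 is where the proof has to happen, and as written there is no gadget behind it.

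\textbf{Why single-link analysis cannot work.} A tight path visits $v$ only once, so a switcher contains at most three edges through $v$. A path in $R_v$ therefore yields at most three consecutive edges $u_{i-1}u_iv,\ u_ivu_{i+1},\ vu_{i+1}u_{i+2}$ of a tight path. There is no ``no-switcher relation for paths inside $R_v$'' comparing $c(vxy)$ with $c(vx'y')$ for distant pairs, so one link cannot be analysed on its own. Your swap gadget (like the paper's short switcher $\{abucd,wxvyz\}$ versus $\{abvcd,wxuyz\}$) only compares two \emph{different} centres on the \emph{same} pair, giving $c(uxy)-c(vxy)=c_{uv}$.

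\textbf{Where the partition should come from.} The useful consequence of that relation is not a per-link $Y_v$ glued by majority vote. That plan fails anyway: in Case A a vertex $v\in Y$ has an essentially monochromatic $R_v$, so $Y_v$ is undetermined. Instead, note that $c_{uw}=c_{uv}+c_{vw}$ and $c_{uv}\in\{-2,0,2\}$. Hence there are at most two classes, and you directly get a global $P:V\to\{\pm1\}$ with $c(uxy)-c(vxy)=P(u)-P(v)$.

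\textbf{The missing step.} What remains, and what you never address, is showing that $g(uxy)=c(uxy)-P(u)-P(x)-P(y)$ does not depend on the pair $xy$. The paper does this with a long switcher, as follows.
\begin{enumerate}
\item Take a set $U$ of more than $(\ell+3)/2$ extendable centres whose common $\zeta$-connectable link $\bigcap_{u\in U}\hat R_u$ contains $xy$, $x'y'$ and paths joining $y$ to $x'$ and $x$ to $y'$.
\item Turn these $2$-paths into tight paths by interleaving distinct centres $u_1,\dots,u_{(\ell+3)/2}$, i.e.\ $q_1q_2u_1q_3q_4u_2\cdots$.
\item Balance the vertex sets with short paths $Q_v,Q'_v$, whose colour sums differ by exactly $3P(v)-3P(v')$.
\item Sum the two resulting no-switcher identities over all cyclic rotations of $(u_1,\dots,u_{(\ell+3)/2})$. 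Everything then cancels except $3c(u_jxy)-3c(u_jx'y')$ and $P$-terms.
\item Use $|U|>(\ell+3)/2$ to isolate a single centre.
\end{enumerate}
A separate averaging lemma over $(\beta,\ell)$-robust paths then supplies, for any two $\eps$-good edges, an intermediate pair $ab$ and at least $2\zeta n$ centres for which these gadgets exist. Non-extendable, non-connectable and problematic configurations are discarded at a cost of $\gamma n^3$ edges. Nothing in your ``rerouting''/``stability'' sketch replaces this mechanism.

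\textbf{Two smaller errors.} First, oddness of $\ell$ (non-bipartiteness of $R_v$) places no restriction on which edge-colourings of $R_v$ occur. For example, colouring $vxy$ red iff $|\{x,y\}\cap Y|$ is even is compatible with any $R_v$. So oddness cannot force affineness; what kills such patterns is the additive form of the switcher identities. In the paper, oddness is used so that $(\ell+3)/2$ is an integer in the interleaving. Second, once $c(e)=\sum_{v\in e}P(v)+c$ holds for all but $\gamma n^3$ robust edges, your Step 3 is immediate. Since $c(e)\in\{\pm1\}$, the sum $\sum_{v\in e}P(v)$ takes only two consecutive values, and $c=-2,0,2$ give Cases A, B, C respectively; no further switcher chaining is needed.
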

\begin{figure}[h!]
\centering

\tikzset{
vtx/.style={inner sep=1.1pt, outer sep=0pt, circle, fill,draw}, 
vtxBig/.style={inner sep=12pt, outer sep=0pt, circle, fill=white,draw}, 
hyperedge/.style={fill=red,opacity=0.5,draw=black}, 
hyperedge2/.style={fill=blue,opacity=0.5,draw=black}
}

\begin{subfigure}{0.3\textwidth}
\centering
\begin{tikzpicture}[scale=2.0]
\draw (0,0) ellipse (0.25cm and 0.7cm);
\draw (1,0) ellipse (0.25cm and 0.7cm);
\draw
(0,-0.2) coordinate(1)
(0,-0.4) coordinate(2)
(0,0.3) coordinate(3)
(1,0.2) coordinate(4)
(1,-0.3) coordinate(5)
(1,0.4) coordinate(6);
\draw[hyperedge2] (1) to[bend left] (2) to[bend left=5] (5) to[bend left=5] (1);
\draw[hyperedge] (-0.1,0.3) to[bend right] (0.1,0.5) to[bend right=5] (0.05,0.1) to[bend right=5] (-0.1,0.3);
\draw (0,0) node{$X$};
\draw (1,0) node{$Y$};
\end{tikzpicture}
\caption{Case A}
\end{subfigure}
\hfill
\begin{subfigure}{0.3\textwidth}
\centering
\begin{tikzpicture}[scale=2.0]
\draw (0,0) ellipse (0.25cm and 0.7cm);
\draw (1,0) ellipse (0.25cm and 0.7cm);
\draw
(1,-0.2) coordinate(1)
(1,-0.4) coordinate(2)
(1,0.3) coordinate(3)
(0,0.2) coordinate(4)
(0,-0.3) coordinate(5)
(0,0.4) coordinate(6);
\draw[hyperedge2] (1) to[bend right] (2) to[bend right=5] (5) to[bend right=5] (1);
\draw[hyperedge] (4) to[bend right] (6) to[bend right=5] (3) to[bend right=5] (4);
\draw (0,0) node{$X$};
\draw (1,0) node{$Y$};
\end{tikzpicture}
\caption{Case B}
\end{subfigure}
\hfill
\begin{subfigure}{0.3\textwidth}
\centering
\begin{tikzpicture}[scale=2.0]
\draw (0,0) ellipse (0.25cm and 0.7cm);
\draw (1,0) ellipse (0.25cm and 0.7cm);
\draw
(0,-0.2) coordinate(1)
(0,-0.4) coordinate(2)
(0,0.3) coordinate(3)
(1,0.2) coordinate(4)
(1,-0.3) coordinate(5)
(1,0.4) coordinate(6);
\draw[hyperedge2] (0.9,-0.4) to[bend right] (1.1,-0.2) to[bend right=5] (1.05,-0.6) to[bend right=5] (0.9,-0.4);
\draw[hyperedge] (4) to[bend left] (6) to[bend left=5] (3) to[bend left=5] (4);
\draw (0,0) node{$X$};
\draw (1,0) node{$Y$};
\end{tikzpicture}
\caption{Case C}
\end{subfigure}

\caption{Illustration of the majority of the robust edges in the three cases from Lemma~\ref{lem:key}.}

\end{figure}

Lemma~\ref{lem:key} is proved in Section \ref{sec:key_lemma}, and two specific types of switcher are used in its proof. The first is a `short' switcher where each of $\mathcal{S}$ and $\mathcal{S}'$ contains two tight 3-paths on three edges. Precisely, $\mathcal{S} = \{abucd, wxvyz\}$ and $\mathcal{S}' = \{abvcd, wxuyz\}$ for distinct vertices $a,b,c,d,u,v,w,x,y,z$.  See Figure~\ref{fig:short_switcher} for a diagram. The absence of this short switcher will be used to show a local property of the edge colouring. That is, for almost all vertices $u,v \in V(H)$, the quantity $c(uxy) - c(vxy)$ is constant on almost all edges $xy \in R_u \cap R_v$.

\begin{figure}[h]
    \centering
    \includegraphics[scale=1]{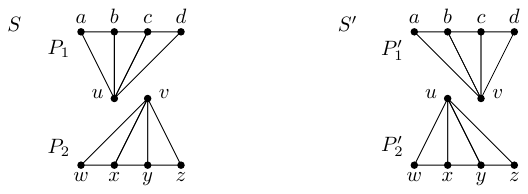}
    \caption{A `short' switcher where each of $S$ and $S'$ contains two tight 3-paths on three edges.}
    \label{fig:short_switcher}
\end{figure}

The second type of switcher is a `long' switcher, where each of $\mathcal{S}$ and $\mathcal{S}'$ contains one long path on $\tfrac{3}{2}(\ell+3) $ edges and $\ell$ short paths on three edges (see Figure~\ref{fig:long_switcher}). We  show that if there are none of these long switchers then the local property of the edge colourings is in fact globally consistent. 
 
\begin{figure}[h!]
    \centering
    \includegraphics[scale=1]{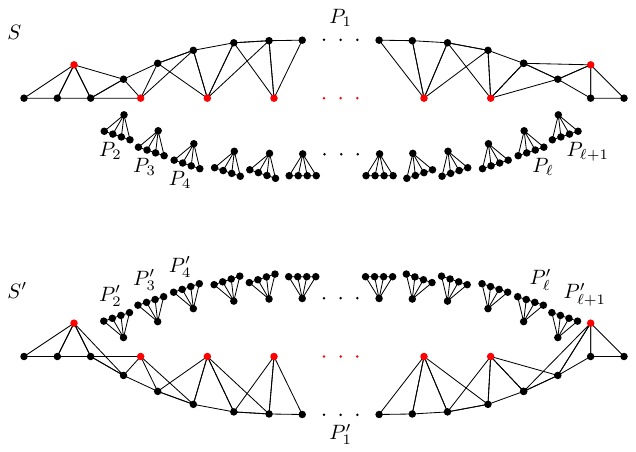}
    \caption{A `long' switcher. There are $\tfrac{1}{2}(\ell+3)$ highlighted (in red) vertices on each of the long paths $P_1$ and $P_1'$, which each contain $\tfrac{3}{2}(\ell+3)$ edges. Note that, although the figure does not show it, for $2 \le i \le \ell+1$, corresponding short paths $P_i,P_i'$ only differ in the central vertex.}
    \label{fig:long_switcher}
\end{figure}

When there are many switchers, we will conclude via the following lemma, proved in Section~\ref{sec:manys}.

\begin{restatable}{lem}{lotsofswitchers}
\label{lem:lots_of_switchers}
     Given Setup \ref{setup}, if $H$ contains at least $4\delta n$ disjoint $\zeta$-connectable $7\ell$-bounded switchers then $H$ contains a tight Hamilton cycle with at least $(\frac{1}{2} + \delta)n$ edges of the same colour.
\end{restatable}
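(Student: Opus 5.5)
We will prove Lemma~\ref{lem:lots_of_switchers} by building one tight Hamilton cycle that passes through all the given switchers, and then choosing, for each switcher, which of the two path collections to use.

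\textbf{Setup.} Let $(\mathcal{S}_1,\mathcal{S}_1'),\dots,(\mathcal{S}_m,\mathcal{S}_m')$ with $m=4\delta n$ be the disjoint $\zeta$-connectable $7\ell$-bounded switchers. Together they contain at most $28\ell\delta n \ll \zeta n$ vertices, and at most $7\ell m$ paths in total.

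\textbf{Step 1: a connecting lemma.} The main tool should be the following. For any two disjoint $\zeta$-connectable pairs $(x,y)$ and $(z,w)$, and any set $W$ of forbidden vertices with $|W|\le \sqrt{\zeta}\, n$, there is an $(x,y)$---$(z,w)$-path of bounded length, depending only on $\ell$, avoiding $W$. We prove it in three moves.
\begin{itemize}
\item Since $|U_{xy}|\ge\zeta n$, choose $u\in U_{xy}$. Then $xyu$ is an edge, so the path can step from the pair $(x,y)$ into the robust link $R_u$.
\item Use $(\beta,\ell)$-robustness of $R_u$ and $R_v$ for suitable $u,v$, with $\ell$ odd. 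Walking along a path $a_1a_2\cdots$ in $R_u$ by interleaving $u$ yields tight path segments of the form $a_1 a_2 u a_3 a_4\ldots$. This is the standard Reiher--R\"odl--Ruci\'nski--Schacht--Szemer\'edi trick of threading a path in a link graph into a tight path.
\item Pass between different links via $\rho$-connectable pairs lying in both $R_u$ and $R_v$. By Proposition~\ref{prop:robust}(i) any two robust subgraphs share at least $0.7n$ vertices and many edges, so such common edges exist.
\end{itemize}
Counting shows there are $\Omega(n^{c})$ such paths for a bounded exponent $c$, while only $o$ of them meet $W$, so one avoiding $W$ exists. We also need a reservoir version: first set aside a random set $\mathcal{R}$ of size $\sqrt{\zeta}\,n$ such that every connectable pair can be linked through $\mathcal{R}$, even after small portions of $\mathcal{R}$ have been used.

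\textbf{Step 2: assembling and covering.} String all paths of $\mathcal{S}_1,\dots,\mathcal{S}_m$ into one tight path $Q$ using the connecting lemma, each time forbidding the vertices used so far; this consumes at most $O(\ell\delta n)\ll\zeta n$ vertices. Next, absorb the leftover vertices outside $Q\cup\mathcal{R}$. We would follow the absorbing strategy of~\cite{3hamcycle}:
\begin{itemize}
\item build an absorbing path beforehand, disjoint from everything;
\item cover almost all remaining vertices by long tight paths, via a path-tiling or almost-perfect-matching argument in the robust links under $\delta_1\ge(3/4+\alpha)\binom n2 > (5/9)\binom n2$;
\item connect everything into a cycle through $\mathcal{R}$;
\item absorb the leftover set.
\end{itemize}
Since Theorem~\ref{thm:hamcyclenocolour} is only stated in black-box form, we will need to reproduce its absorber and reservoir lemmas, with endpoints of the pieces being $\zeta$-connectable pairs. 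We expect this step to be the main obstacle: verifying that the connection pairs used by those lemmas are connectable in our sense, and that the absorbers still work once the switcher vertices are removed.

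\textbf{Step 3: choosing colours.} This gives a Hamilton cycle $C$ in which, for each $i$, we may swap $\mathcal{S}_i$ for $\mathcal{S}_i'$. The swap is valid because both collections have the same vertex set and the same end pairs, and each $\mathcal{S}_i$ contributes only interior edges. Each swap changes $c(C)$ by a nonzero even integer. Let $C^-$ use the choice minimizing $c$ on every switcher and $C^+$ the choice maximizing it. Then
\[
c(C^+)-c(C^-)\ \ge\ 2m = 8\delta n,
\]
so one of $|c(C^+)|$ and $|c(C^-)|$ is at least $4\delta n \ge 2\delta n$. By the observation after the definition of $c$, that cycle has at least $(\frac12+\delta)n$ edges of one colour.
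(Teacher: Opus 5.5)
Your plan is the paper's proof. You glue all paths of the switchers into one tight path (the paper's $\mathcal{W}$, your $Q$) with the connecting lemma. You then run the machinery of \cite{3hamcycle}: a reservoir $\mathcal{R}$, an absorbing path $P_A$ avoiding $\mathcal{R}\cup V(\mathcal{W})$, an almost spanning path avoiding $V(P_A)\cup V(\mathcal{W})$, closing the cycle through $\mathcal{R}$, and absorbing the leftover. Finally you switch. Your Step~3 is correct, and your parity remark even gives a gap of $8\delta n$ where the paper uses only $4\delta n$.

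The paper does not reprove anything from \cite{3hamcycle}. It takes Lemmas~\ref{lem:connecting}, \ref{lem:reservoir}, \ref{lem:absorb} and \ref{lem:bigpath} as black boxes. It observes only that the last two still hold when a set of at most $2\vartheta_*^2 n$ vertices (resp.\ a sufficiently small linear set) is avoided. So the ``main obstacle'' you anticipate in Step~2 reduces to the bound $|V(\mathcal{W})|\le 100\ell^2\delta n$. Connectability of all end pairs is automatic, since $\zeta$-connectable pairs are $\zeta_{**}$-connectable.

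What you must fix is the scale $\sqrt{\zeta}\,n$ in Step~1, which is wrong as stated.
\begin{itemize}
\item A $\zeta$-connectable pair $xy$ is only guaranteed $|U_{xy}|\ge\zeta n$. Nothing in Setup~\ref{setup} prevents every $u$ with $xyu\in E(H)$ from lying in a set of about $\zeta n<\sqrt{\zeta}\,n$ vertices. A forbidden set $W$ containing that set blocks every tight path starting with $(x,y)$.
\item Quantitatively, Lemma~\ref{lem:connecting} guarantees only $\vartheta n^{3\ell+1}$ paths with $\vartheta\ll\zeta$. Each forbidden vertex lies on up to $(3\ell+1)n^{3\ell}$ of them, so you need $|W|\ll\vartheta n$. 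Your claim that ``only $o$ of them meet $W$'' fails for $|W|=\sqrt{\zeta}\,n$.
\item Likewise, a reservoir of size $\sqrt{\zeta}\,n$ leaves roughly $\sqrt{\zeta}\,n$ unused reservoir vertices to absorb. The absorbing path of \cite{3hamcycle} only absorbs sets of size at most $2\vartheta_*^2 n$.
\end{itemize}

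The remedy is to adopt the hierarchy $\delta\ll\vartheta_{**}\ll\zeta_{**}\ll\vartheta_*\ll\zeta_*\ll\vartheta\ll\zeta$ with $|\mathcal{R}|\le\vartheta_*^2 n$. You should then compare the $O(\ell^2\delta n)$ vertices of the switcher path with $\vartheta_{**}^2 n$ rather than with $\zeta n$. Since $\delta$ is chosen last, the forbidden sets you actually use in Step~2 are then harmless.
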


The proof of Lemma~\ref{lem:lots_of_switchers} is similar to the proof of Theorem~\ref{thm:hamcyclenocolour} from \cite{3hamcycle}. The main difference is that we create a Hamilton cycle that `contains' the disjoint switchers, in the sense that it contains one collection of paths from each switcher; this is made possible by requiring the switchers to be $\zeta$-connectable. We can then `switch' between the collections of paths in each switcher to get a colour-biased Hamilton cycle.

\section{Many switchers: proof of Lemma~\ref{lem:lots_of_switchers}}
\label{sec:manys}

Say that a tight path or cycle \emph{contains} a switcher $(\mathcal{S}, \mathcal{S}')$ if it contains the tight paths in $\mathcal{S}$. Using Lemma~\ref{lem:connecting} (below), we will glue the $4\delta n$ disjoint $7\ell$-bounded $\zeta$-connectable switchers together to find a tight path $\mathcal{W}$ containing them all. 

First, we set aside a `reservoir' $\mathcal{R}$, a small set of vertices that can be used to link together any endpoints of paths (\cref{lem:reservoir}). We then construct an `absorbing path' $P_A$ which can be extended to a path containing any small set of unused leftover vertices (\cref{lem:absorb}). 
For the final ingredient, we find an almost spanning path $Q$ disjoint from $\mathcal{W}$, $P_A$ and most of $\mathcal{R}$ (\cref{lem:bigpath}).

Putting this all together, we use the reservoir $\mathcal{R}$ to glue together the paths $\mathcal{W}, P_A$ and $Q$ into an almost spanning cycle. We then extend the absorbing path $P_A$ to cover the small number of unused vertices, thus obtaining a Hamilton cycle $C$ containing all of the switchers. For each switcher, we have the choice to switch to using $\mathcal{S}'$ instead of $\mathcal{S}$, giving a colour-biased tight Hamilton cycle as required. 

We begin by laying out results from \cite{3hamcycle} we will need. Throughout the section we assume Setup \ref{setup} holds.

\begin{lem}[Connecting Lemma, {\cite[Proposition~2.6]{3hamcycle}}]
   \label{lem:connecting} Let $\zeta > 0$. Then there exists $\vartheta > 0$
    such that every two disjoint $\zeta$-connectable ordered pairs $(x, y)$ and $(z, w)$ are connected by
    at least $\vartheta n^{3\ell + 1}$ tight $(x, y)$---$(z, w)$-paths of length $3(\ell+1)$ in $H$.
\end{lem}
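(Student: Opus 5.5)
This is Proposition~2.6 of \cite{3hamcycle}, and the plan is to reproduce its argument in our setting. The idea is to build the connecting path in three stages: leave $(x,y)$ through the link of a typical vertex of $U_{xy}$, cross over to a typical vertex of $U_{zw}$ by a middle segment, and arrive at $(z,w)$ through that vertex's link. All three stages are driven by the robustness of the graphs $R_v$ from Proposition~\ref{prop:robust}.

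\textbf{Step 1: a walk in the robust graph $R_v$ gives a tight path.} For a vertex $v$ and a walk $a_0a_1\dots a_\ell$ in $R_v$, interleave $v$ into the walk. If $v$ is placed between every pair of consecutive vertices, every consecutive triple has the form $a_{i}va_{i+1}$, which is an edge of $H$; but $v$ may appear only once in a tight path, so this is not yet usable. I would therefore follow \cite{3hamcycle} and use a sequence of \emph{different} vertices $v_1,v_2,\dots$ chosen so that consecutive link edges are shared.

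\textbf{Step 2: the concrete construction.} Starting from $(x,y)$, pick $v\in U_{xy}$; there are at least $\zeta n$ choices. Then $xyv$ is an edge of $H$, and $yv$ lies in the link of $x$. To continue, we need pairs $(y,v)$ that are again connectable. Counting over choices of $v$ with $xy\in R_v$ and using the size bounds on the $R_u$ from Proposition~\ref{prop:robust}(i), one shows that a positive proportion of these pairs $yv$ lie in $\Omega(n)$ graphs $R_u$. By Proposition~\ref{prop:robust}(i), $|V(R_u)|\ge 0.853n$, so any two of the sets $V(R_u)$ overlap in more than $0.7n$ vertices, and each $R_u$ is $(\beta,\ell)$-robust. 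Inside a common link $R_u$, robustness gives $\beta n^{\ell-1}$ paths $y=b_0b_1\dots b_\ell=b'$ of odd length $\ell$. A path of this kind yields the tight path $\dots b_0\,u\,b_1\dots$, and the $\beta$-fraction lets us pass between the neighbourhoods of $(x,y)$ and $(z,w)$. The total length is $3(\ell+1)$, which accounts for the $3\ell+1$ free interior vertices (the length-$3(\ell+1)$ path has $3\ell+5$ vertices, four of which are $x,y,z,w$).

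\textbf{Step 3: counting.} Each free choice has $\Omega(n)$ options: $\zeta n$ choices for the vertices taken from $U_{xy}$ and $U_{zw}$, and $\beta n^{\ell-1}$ choices for the robust paths. Degenerate choices with repeated vertices number only $O(n^{3\ell})$, so subtracting them leaves at least $\vartheta n^{3\ell+1}$ paths, with $\vartheta=\vartheta(\zeta,\beta,\ell)$.

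\textbf{Main obstacle.} The hard part is the middle step: guaranteeing that the link graphs reached from $(x,y)$ and from $(z,w)$ share a common robust structure, meaning a vertex $u$ whose $R_u$ contains edges adjacent to both ends. This is exactly where the bound $0.853n>n/2$ in Proposition~\ref{prop:robust}(i) is essential, since it forces the vertex sets of distinct robust components to intersect substantially. If this argument becomes delicate, I would fall back on citing \cite[Proposition~2.6]{3hamcycle} directly, since its hypotheses are exactly the conclusions of Proposition~\ref{prop:robust}.
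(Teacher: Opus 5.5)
The paper itself gives no argument for this lemma; it imports the statement verbatim from \cite{3hamcycle}. Your fallback of citing it is therefore exactly the paper's route, and that suffices. The reconstruction in Steps 1--3, however, is not a proof, and its central step fails.

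In Step 2 you turn a path $b_0b_1\cdots b_\ell$ of a single $R_u$ into ``the tight path $\dots b_0\,u\,b_1\dots$''. Your own Step 1 rules this out. A vertex occupying one slot $v_i$ of a tight path accounts for only the three link edges $v_{i-2}v_{i-1}$, $v_{i-1}v_{i+1}$, $v_{i+1}v_{i+2}$. To convert a link path $q_1q_2\cdots q_{2k+2}$ into the tight path $q_1q_2u_1q_3q_4u_2\cdots u_kq_{2k+1}q_{2k+2}$ (the $Q_{\vec u}$ construction in Lemma~\ref{lem:colours_are_consistent}), you need $k$ \emph{distinct} vertices whose robust graphs all contain the relevant stretch. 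Producing $\Omega(n^{\ell-1})$ link paths, each lying in $\bigcap_{u\in U}R_u$ for a linear-size set $U$, is a separate averaging step (the one in the proof of Lemma~\ref{lem:anotherconnecting}). Robustness of each individual $R_u$ does not give it. So the count in Step 3, which multiplies $\zeta n$ choices of $u$ by $\beta n^{\ell-1}$ robust paths as if they were independent, is unfounded.

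The greedy claim that a positive proportion of the pairs $yv$ with $v\in U_{xy}$ are again connectable is also unproved. Fact~\ref{fact:connectable} says nothing about a fixed pair $(x,y)$: its $\rho n^3$ exceptional triples could contain all $O(n^2)$ triples through $y$ and $U_{xy}$. The claim is not needed anyway.

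Your diagnosis of the hand-off is also off. Large overlap of $V(R_u)$ and $V(R_{u'})$ does not give a common edge, since two graphs on the same vertex set can be edge-disjoint. And no single vertex can carry the path across. What works is edge overlap, via the following steps.
\begin{enumerate}[(1)]
\item By Lemma~\ref{lem:robust_intersections}, $e(R_u\cap R_{u'})\ge 0.22n^2$ for all $u,u'$. Summing over $U_{xy}\times U_{zw}$ yields $\Omega(\zeta^2n^2)$ pairs $ab$, each lying in $R_u$ for $\Omega(\zeta^2n)$ vertices $u\in U_{xy}$ and in $R_{u'}$ for $\Omega(\zeta^2n)$ vertices $u'\in U_{zw}$.
\item For each such $ab$, averaging as in Lemma~\ref{lem:anotherconnecting} gives $\Omega(n^{\ell-1})$ paths $yr_1\cdots r_{\ell-1}a$, each contained in $R_u$ for all $u$ in a linear-size set $U$ of such vertices. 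Likewise it gives paths $zs_1\cdots s_{\ell-1}b$ with a linear-size set $U'$.
\item Interleave $(\ell+1)/2$ vertices of $U$, followed by $(\ell+1)/2$ vertices of $U'$, into the link path $x\,y\,r_1\cdots r_{\ell-1}\,a\,b\,s_{\ell-1}\cdots s_1\,z\,w$. This gives a tight $(x,y)$---$(z,w)$-path on $3\ell+5$ vertices, i.e.\ of length $3(\ell+1)$.
\end{enumerate}
Oddness of $\ell$ is exactly what places $ab=q_{\ell+2}q_{\ell+3}$ at the slot shared by the last vertex taken from $U$ and the first taken from $U'$. The choices multiply to $\Omega(n^{2+2(\ell-1)+(\ell+1)})=\Omega(n^{3\ell+1})$, and degenerate choices are of lower order.
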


\cref{lem:connecting} gives us a new constant $\vartheta$, which we may assume satisfies $\vartheta \ll \zeta$. In addition, we introduce new constants $\zeta_*, \vartheta_*, \zeta_{**}$ and $\vartheta_{**}$, whose scope is only this section and which are necessary to use certain lemmas from~\cite{3hamcycle} as black boxes. These augment the hierarchy in \cref{setup} as follows:

\[ \frac{1}{n_0} \ll \delta \ll \vartheta_{**} \ll \zeta_{**} \ll \vartheta_* \ll \zeta_* \ll \vartheta \ll  \zeta \ll \beta, \frac{1}{\ell} \ll \alpha \]

These constants are obtained in~\cite{3hamcycle} by applying \cref{lem:connecting} more times, first with some $\zeta_* \ll \vartheta$ and then with some $\zeta_{**} \ll \vartheta_*$. We keep the same symbol names to ease the use of the lemmas from that paper as black boxes.

We can now state the reservoir and absorbing lemmas that are central to the proof of Lemma~\ref{lem:lots_of_switchers}. 

\begin{lem}[Reservoir Lemma, {\cite[Proposition~$2.7$]{3hamcycle}}]\label{lem:reservoir}
    There exists a `reservoir' set $\mathcal{R} \subseteq V(H)$ with $\frac{\vartheta_{*}^2 n}{2} \leq |\mathcal{R}| \leq \vartheta_{*}^2 n$ such that for all disjoint pairs of $\zeta_{**}$-connectable pairs $(x,y)$ and $(z,w)$ there are at least $\vartheta_{**}|\mathcal{R}|^{3\ell + 1}/2$ tight $(x,y)$---$(z,w)$-paths of length $3(\ell + 1)$ in $H$ whose internal vertices belong to $\mathcal{R}$.
\end{lem}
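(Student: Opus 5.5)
The plan is a random-sampling argument, as in \cite[Proposition~2.7]{3hamcycle}. We choose $\mathcal{R}$ by including each vertex of $V(H)$ independently with probability $p = \tfrac{3}{4}\vartheta_*^2$. We then show that, with positive probability, $\mathcal{R}$ has the right size and is a good connector for every relevant pair of pairs.

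Size first. We have $\mathbb{E}|\mathcal{R}| = pn$, so Chernoff's inequality gives $\tfrac{\vartheta_*^2 n}{2} \le |\mathcal{R}| \le \vartheta_*^2 n$ with probability $1-e^{-\Omega(n)}$.

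Connections next. Fix disjoint $\zeta_{**}$-connectable ordered pairs $(x,y)$ and $(z,w)$. Lemma~\ref{lem:connecting}, applied with $\zeta_{**}$ in place of $\zeta$, gives at least $\vartheta_{**}' n^{3\ell+1}$ tight $(x,y)$---$(z,w)$-paths of length $3(\ell+1)$. We take $\vartheta_{**}$ smaller than the constant produced by that application, which is allowed by the hierarchy. Each such path has exactly $3\ell+1$ internal vertices. Let $Z$ be the number of these paths whose internal vertices all lie in $\mathcal{R}$. Then
\[
\mathbb{E}Z \ge \vartheta_{**}' p^{3\ell+1} n^{3\ell+1}.
\]
We regard $Z$ as a function of the independent indicators $\mathbf{1}[v\in\mathcal{R}]$ for $v \notin\{x,y,z,w\}$. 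Changing a single indicator changes $Z$ by at most the number of paths through that vertex, which is $O(n^{3\ell})$. McDiarmid's (bounded differences) inequality therefore gives
\[
\Pr\bigl[Z < \tfrac{2}{3}\mathbb{E}Z\bigr] \le \exp\bigl(-\Omega(n^{2(3\ell+1)}/(n\cdot n^{6\ell}))\bigr) = e^{-\Omega(n)}.
\]
Alternatively, Janson's inequality works: pairs of paths sharing an internal vertex contribute $\Delta = O(n^{6\ell+1})$, which is negligible against $(\mathbb{E}Z)^2 = \Theta(n^{6\ell+2})$.

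A union bound over the $O(n^4)$ choices of $(x,y,z,w)$, together with the size event, leaves positive probability that everything holds simultaneously. On that event,
\[
Z \ge \tfrac23 \vartheta_{**}'p^{3\ell+1}n^{3\ell+1} \ge \tfrac{\vartheta_{**}}{2}|\mathcal{R}|^{3\ell+1}.
\]
This last inequality holds because $|\mathcal{R}|\le \vartheta_*^2 n = \tfrac43 pn$, so $|\mathcal{R}|^{3\ell+1} \le (4/3)^{3\ell+1}(pn)^{3\ell+1}$, and $\vartheta_{**} \le \vartheta_{**}'(3/4)^{3\ell+1}$ absorbs the constant.

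The only delicate point is getting the constants consistent. The density of connecting paths from Lemma~\ref{lem:connecting} must be compared against $|\mathcal{R}|^{3\ell+1}$ rather than $n^{3\ell+1}$, which forces $\vartheta_{**}$ to depend on $\ell$ and on $\zeta_{**}$ but not on $\vartheta_*$ beyond the hierarchy. The concentration step is routine because $\ell$ is a constant and $p$ is a fixed positive constant.
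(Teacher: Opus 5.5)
Your argument is correct and is essentially the standard proof of \cite[Proposition~2.7]{3hamcycle}, which the paper imports as a black box without reproving it. That proof samples $\mathcal{R}$ at random, uses Chernoff for $|\mathcal{R}|$, uses an exponentially strong (Janson or bounded-differences) lower tail for the number of connecting paths inside $\mathcal{R}$, and finishes with a union bound over the $O(n^4)$ quadruples. Shrinking $\vartheta_{**}$ by $(3/4)^{3\ell+1}$ is harmless but unnecessary: Chernoff gives $|\mathcal{R}|\le(1+\varepsilon)pn$ for any fixed $\varepsilon>0$, and with the slack factor $1/2$ in the statement this lets you keep $\vartheta_{**}$ equal to the constant that Lemma~\ref{lem:connecting} supplies for $\zeta_{**}$, as the paper's hierarchy intends.
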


We will need to find an absorbing path avoiding both this `reservoir' set $\mathcal{R}$, given by Lemma~\ref{lem:reservoir}, and a path $\mathcal{W}$ containing the switchers. In \cite{3hamcycle}, only the reservoir set need be avoided, but inspection of the proof of \cite[Proposition~2.9]{3hamcycle} reveals that the same argument works if we instead avoid some specified set of size $2\vartheta^2_{*} n$. We will see later that $|V(\mathcal{W})| \leq \vartheta^2_{*} n$, so this will suffice for our purposes.

\begin{lem}[Absorbing path, cf. {\cite[Proposition~$2.9$]{3hamcycle}}]\label{lem:absorb}
    Let $\mathcal{R}' \subseteq V(H)$ be a set of at most $2\vartheta^2_{*} n$ vertices. There exists a tight path $P_A$ which is a subhypergraph of $H - \mathcal{R}'$ and has the following properties:
    \begin{itemize}
        \item[(i)] $|V(P_A)| \leq \vartheta_{*}n$,
        \item[(ii)] the end-pairs of $P_A$ are $\zeta_{*}$-connectable, and
        \item[(iii)] for every set $X \subseteq V \setminus V(P_A)$ with $|X| \leq 2\vartheta_{*}^2n$ there is a tight path in $H$ whose set of vertices is $V(P_A) \cup X$ and whose end-pairs are the same as those of $P_A$.
    \end{itemize}
\end{lem}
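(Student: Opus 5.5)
The plan is to rerun the proof of \cite[Proposition~2.9]{3hamcycle} verbatim, changing only the step where the reservoir is avoided. The objects are \emph{absorbers}. For a vertex $v$, a $v$-absorber is a tight path $A$ of bounded length (depending only on $\ell$) with $\zeta_*$-connectable end-pairs, together with a second tight path $A^v$ with the same end-pairs and vertex set $V(A)\cup\{v\}$.

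The first step is counting: I would show that every vertex $v$ lies in at least $\xi n^{m}$ $v$-absorbers, where $m$ is the number of vertices of an absorber and $\xi$ depends only on $\zeta,\beta,\ell$ (so $\vartheta_*\ll\xi$). For this I would copy the construction in \cite{3hamcycle}: use the robust subgraph $R_v$ from Proposition~\ref{prop:robust} to find many edges $xy\in E(R_v)$ whose surroundings allow $v$ to be inserted, and use Lemma~\ref{lem:connecting} to supply the connecting paths inside the absorber. Since this is exactly their argument, I would take the count as given from their proof.

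Next I would select absorbers randomly, including each potential absorber independently with probability $p=\vartheta_*^{3/2} n^{1-m}$ (or whatever value \cite{3hamcycle} uses). By Chernoff and Markov bounds, with positive probability the selected family $\mathcal{F}$ satisfies the following:
\begin{itemize}
\item[(a)] $|\mathcal{F}|\le 2p\,n^{m}$, so that $|\mathcal{F}|\,m$ is much smaller than $\vartheta_* n$;
\item[(b)] the number of intersecting pairs in $\mathcal{F}$ is $O(p^2n^{2m-1})$, which is negligible;
\item[(c)] every $v$ has at least $\tfrac12 p\,\xi n^m$ absorbers in $\mathcal{F}$.
\end{itemize}
The only new point is avoiding $\mathcal{R}'$. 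The number of $m$-vertex absorbers meeting $\mathcal{R}'$ is at most $m|\mathcal{R}'|n^{m-1}\le 2m\vartheta_*^2 n^m$. So the expected number of selected absorbers meeting $\mathcal{R}'$ is at most $2m\vartheta_*^2 p n^m$. This is far below $\tfrac14 p\xi n^m$, since $\vartheta_*\ll\xi$, and by Markov it stays so with positive probability. Deleting these absorbers, together with one absorber from each intersecting pair, leaves a disjoint family $\mathcal{F}'$ in $H-\mathcal{R}'$. In $\mathcal{F}'$ every vertex still has at least $\tfrac14 p\xi n^m\ge 2\vartheta_*^2 n$ absorbers, and this inequality is what fixes the choice of $p$.

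I would then join the members of $\mathcal{F}'$ one by one into a single tight path $P_A$ using Lemma~\ref{lem:connecting}, each connection having length $3(\ell+1)$. Each connection must avoid $\mathcal{R}'$ and all vertices already used. The forbidden set has size at most $2\vartheta_*^2 n+O(\ell|\mathcal{F}'|)$, which is much smaller than $\vartheta n$. Each new internal vertex kills at most $n^{3\ell}$ of the $\vartheta n^{3\ell+1}$ available connecting paths, so a valid connection always exists. This gives (i), and (ii) holds because the end-pairs of $P_A$ are end-pairs of absorbers.

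For (iii), given $X\subseteq V\setminus V(P_A)$ with $|X|\le 2\vartheta_*^2 n$, I would assign distinct absorbers to the vertices of $X$ greedily: each $x$ has at least $2\vartheta_*^2 n\ge|X|$ absorbers in $\mathcal{F}'$, and the absorbers are disjoint. For each $x$ I would then replace $A$ by $A^x$ inside $P_A$; this keeps the end-pairs and covers $V(P_A)\cup X$.

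The main obstacle is the counting step, which is the genuinely hypergraph-specific part. Since I am reusing \cite{3hamcycle}, the real check is that nothing in their absorber count or connection steps used properties of the reservoir beyond its size $\le\vartheta_*^2 n$. Doubling that size to $2\vartheta_*^2 n$ only affects constants, and these are absorbed by the hierarchy.
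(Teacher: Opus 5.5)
Your proposal is correct and follows the same route as the paper, which also just reruns the proof of \cite[Proposition~2.9]{3hamcycle} and observes that avoiding an arbitrary set of at most $2\vartheta_*^2 n$ vertices, rather than the reservoir, changes only constants. One small slip: if the absorbers' end-pairs are only $\zeta_*$-connectable, Lemma~\ref{lem:connecting} gives $\vartheta_* n^{3\ell+1}$ connecting paths, not $\vartheta n^{3\ell+1}$; the greedy connection step still works, because your forbidden set has size $O(\ell\vartheta_*^{3/2}n)\ll\vartheta_* n$.
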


The final ingredient needed for the proof of Lemma~\ref{lem:lots_of_switchers} is the following lemma. As with Lemma~\ref{lem:absorb}, this is an analogue of a result from \cite{3hamcycle} where we additionally remove a set of vertices $\mathcal{W}$ containing our $4\delta n$ switchers (this time without also removing the reservoir $\mathcal{R}$). As above, inspection of the proof of \cite[Lemma~7.1]{3hamcycle} reveals that the same argument works if we instead avoid a small enough linearly sized set.

\begin{lem}[{\cite[Lemma~$7.1$]{3hamcycle}}]\label{lem:bigpath}
    Let $\mathcal{R} \subseteq V$ be a reservoir set as given by Lemma~\ref{lem:reservoir}, let $\mathcal{W} \subseteq V$ be a set of at most $100\delta \ell^2 n$ vertices and let $P_A$ be an absorbing path given by Lemma~\ref{lem:absorb}. There exists a tight path $Q \subseteq \hat{H} = H - V(P_A) - V(\mathcal{W})$ such that: 

    \begin{itemize}
        \item[(i)] $|V(\hat{H})\setminus (\mathcal{R} \cup V(Q))| \leq \vartheta_{*}^2 n$. 
        \item[(ii)] $|V(Q) \cap \mathcal{R}| \leq \vartheta_{**}^2 n$.
        \item[(iii)] The end pairs of $Q$ are $\zeta_{**}$-connectable.
    \end{itemize}
\end{lem}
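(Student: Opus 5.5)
The plan is to rerun the proof of \cite[Lemma~7.1]{3hamcycle} inside $\hat H = H - V(P_A) - V(\mathcal{W})$. The only new ingredient is checking that every tool used there survives deleting the set $D := V(P_A)\cup V(\mathcal{W})$. By Lemma~\ref{lem:absorb}(i) and the hypothesis on $\mathcal{W}$, $|D| \le \vartheta_* n + 100\delta\ell^2 n \le 2\vartheta_* n$. By the hierarchy, $\vartheta_*$ is tiny compared with $\vartheta,\zeta,\beta,1/\ell,\alpha$.

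First I would record the robustness facts about $\hat H$. Deleting $2\vartheta_* n$ vertices lowers every vertex degree by at most $2\vartheta_* n^2$, so $\delta_1(\hat H) \ge (\tfrac34+\tfrac{\alpha}{2})\binom{|V(\hat H)|}{2}$. I would keep the robust subgraphs $R_v$ of the original $H$, so the notion of $\rho$-connectable pairs does not change. The Connecting Lemma~\ref{lem:connecting} gives at least $\vartheta n^{3\ell+1}$ connecting paths of length $3(\ell+1)$ between any two disjoint $\zeta$-connectable pairs. Those meeting $D$ number at most $(3\ell+1)\cdot 2\vartheta_* n \cdot n^{3\ell} \le \tfrac{\vartheta}{2} n^{3\ell+1}$. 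Hence the connecting property holds in $\hat H$ with $\vartheta/2$, and the same counting applies at the $\zeta_*$ and $\zeta_{**}$ levels and for reservoir-internal paths. For the reservoir itself, I would observe that Lemma~\ref{lem:reservoir} applies to $\mathcal{R}$ directly, provided $\mathcal{R}\cap D=\emptyset$. This holds because $P_A$ was built avoiding $\mathcal{R}' \supseteq \mathcal{R}\cup V(\mathcal{W})$.

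Next I would follow the original argument in $\hat H - \mathcal{R}$. Apply the (weak) hypergraph regularity lemma to obtain a reduced $3$-graph $\mathcal{K}$ on a bounded number $t$ of clusters. The degree condition on $\hat H - \mathcal{R}$ passes to $\mathcal{K}$ up to a small error. As in \cite{3hamcycle}, use it to find in $\mathcal{K}$ an almost perfect fractional structure (fractional matching within one tightly connected component). Inside each regular triple with positive weight, greedily embed long tight paths covering all but a $\vartheta_*^2/2$-fraction of the cluster vertices, with end-pairs chosen $\zeta_{**}$-connectable; this is possible since almost all pairs are connectable by Proposition~\ref{prop:robust}. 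The number of resulting paths is bounded in terms of $t$, hence by a constant independent of $n$. Join them consecutively into one tight path $Q$ using connecting paths of length $3(\ell+1)$ with internal vertices in $\mathcal{R}$, via Lemma~\ref{lem:reservoir}. This uses only $O(t\ell)$ reservoir vertices, far below $\vartheta_{**}^2 n$, which gives (ii). Choosing paths that avoid previously used vertices costs only $O(n^{3\ell})$ of the $\tfrac{\vartheta_{**}}{2}|\mathcal{R}|^{3\ell+1}$ available paths. Condition (iii) holds by construction, and (i) follows since the uncovered vertices of $\hat H-\mathcal{R}$ are the exceptional ones from regularity plus the leftovers in the clusters.

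The main obstacle I expect is the step in $\mathcal{K}$: verifying that the RRRSS argument producing an almost perfect fractional matching within a single tight component still goes through after deleting $D$. I would handle it by noting that the argument only uses $\delta_1 \ge (\tfrac34+\alpha')\binom{n}{2}$ together with the robust link structure, both of which degrade by $O(\vartheta_*)$. Since all constants there are chosen with room to spare relative to $\alpha$, replacing $\alpha$ by $\alpha/2$ throughout should suffice.
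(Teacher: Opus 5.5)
Your plan is essentially the paper's. The paper gives no separate argument, only the remark that the proof of \cite[Lemma~7.1]{3hamcycle} goes through when one also avoids a small linear-sized set; that proof already works in $H-V(P_A)$, so $V(\mathcal{W})$, with $|V(\mathcal{W})|\le 100\delta\ell^2 n\ll\vartheta_{**}^2 n$, is the only genuinely new deletion, and rerunning that proof in $\hat H$ is exactly what you propose.

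Two details of your sketch should be fixed, though neither changes the strategy. First, $\mathcal{R}$ need not be disjoint from $V(\mathcal{W})$, since only $P_A$ is constructed to avoid $\mathcal{R}$. The reservoir connections must therefore dodge $\mathcal{R}\cap V(\mathcal{W})$ by counting, which works because $|V(\mathcal{W})|\ll\vartheta_{**}|\mathcal{R}|$. Second, it is not true that almost all pairs are $\zeta_{**}$-connectable: a pair of codegree below $\zeta_{**}n$ never is, and a constant proportion of pairs can have codegree $0$ under the degree hypothesis. So the connectable end-pairs must be arranged as in \cite{3hamcycle} from the robust structure; Fact~\ref{fact:connectable} only guarantees that almost all pairs $xy\in R_z$ are connectable, and Proposition~\ref{prop:robust} gives nothing more.
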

Given Lemmas~\ref{lem:reservoir}, \ref{lem:absorb}, and \ref{lem:bigpath}, we are able to 
prove Lemma~\ref{lem:lots_of_switchers}, restated below for convenience.

\lotsofswitchers*

\begin{proof}[Proof]
     Let the disjoint $7\ell$-bounded switchers be denoted by $W_1, \ldots, W_{\lceil 4 \delta n \rceil}$.
     By definition, for each $1 \le i \le \lceil 4 \delta n \rceil$, 
     there exists $t_i \leq 7\ell$ such that each $W_i$ consists of two sets of paths $\mathcal{S}_i = \{P_{i,1}, \ldots, P_{i,t_i}\}$ and $\mathcal{S}_i' = \{P_{i,1}', \ldots, P_{i,t_i}'\}$. In addition, $P_{i,j}$ and $P_{i,j}'$ have the same initial pairs of vertices $(x^1_{i,j}, x^2_{i,j})$ and the same final pairs of vertices $(y^1_{i,j}, y^2_{i,j})$ for each $j$. 
    Observe that since $c(\mathcal{S}_i) \neq c(\mathcal{S}_i')$ for all  $1 \le i  \le \lceil 4 \delta n \rceil$, 
     we may without loss of generality relabel for each $i$ such that $c(\mathcal{S}_i) \geq c(\mathcal{S}_i') + 1$. 

     We will begin by repeatedly applying Lemma~\ref{lem:connecting} to join the paths $P_{i,j}$ together. 
     For each $1 \le i \le \lceil 4 \delta n \rceil$ and  $1 \le s \le t_i-1$, apply Lemma~\ref{lem:connecting} to create vertex disjoint tight paths of length $3(\ell + 1)$ in $H$ between $(y^1_{i,s}, y^2_{i,s})$ and $(x^1_{i,s+1}, x^2_{i,s+1})$.
     Further, for each $1 \le i \le \lceil 4 \delta n \rceil - 1$, apply Lemma~\ref{lem:connecting} to create vertex disjoint tight paths of length $3(\ell + 1)$ in $H$ between $(y^1_{i,t_i}, y^2_{i,t_i})$ and $(x^1_{i+1,1}, x^2_{i+1,1})$.
     
     Note that with each application of Lemma~\ref{lem:connecting}, we make sure the internal vertices of the tight paths added are vertex disjoint from both each other and $\bigcup_{i =1}^{\lceil4\delta n\rceil} V(W_i)$.  This is possible, as we apply the lemma
     at most $\sum_{i=1}^{\lceil 4 \delta n \rceil} t_i \le \lceil 4 \delta n \rceil \cdot 7\ell$ times, adding at most $\lceil 4 \delta n \rceil  \cdot 7\ell \cdot (3\ell +1)$ vertices. Thus we have at most $\lceil 4 \delta n \rceil  \cdot 7\ell \cdot (3\ell +2) < 100\delta \ell^2 n$ vertices to avoid (those in the switchers and those added in previous paths). For any $x,y,w,z$, each fixed vertex to avoid can be in at most $n^{3\ell}$ tight $(x,y)$---$(z,w)$-paths on $3(\ell+1)$ edges, so there are at most $100\delta \ell^2 n^{3\ell+1}$ paths ruled out at each step. Since there are at least $\vartheta n^{3\ell+1}$ tight $(x,y)$---$(z,w)$-paths to choose from and $\vartheta \gg \delta \ell^2$, we can therefore always find a path that is internally disjoint from the previously used vertices.
          
     Denote by $\mathcal{W}$ the tight path we have thus created and let $\mathcal{W}'$ be the tight path on the same vertex set obtained by replacing the paths in $\mathcal{S}_i$ by the paths in $\mathcal{S}_i'$. Note that $\mathcal{W}$ and $\mathcal{W}'$ have the same initial and final pairs
     $(w_1, w_2) := (x^1_{1,1}, x^2_{1,1})$ and $(w'_1, w'_2) := (y^1_{ \lceil 4 \delta n \rceil, t_{ \lceil 4 \delta n \rceil}}, y^2_{ \lceil 4 \delta n \rceil, t_{ \lceil 4 \delta n \rceil}})$. Moreover, since $c(\mathcal{S}_i) \ge c(\mathcal{S}_i') + 1$, we have $c(\mathcal{W}) \geq c(\mathcal{W}') + 4\delta n$. Note further that $|V(\mathcal{W})| \leq 100\ell^2 \delta n$. 
     
    Now apply Lemmas~\ref{lem:reservoir}, \ref{lem:absorb} and \ref{lem:bigpath} to $H$, yielding a reservoir set $\mathcal{R}$, an absorbing path $P_A \subseteq H - \mathcal{R} - V(\mathcal{W})$ and a tight path $Q \subseteq \hat{H} := H  - V(P_A) - V(\mathcal{W})$ such that: 
 
     \begin{itemize}
         \item[(i)] $|V(\hat{H})\setminus (\mathcal{R} \cup V(Q))| \leq \vartheta_{*}^2 n$.
         \item[(ii)] $|V(Q) \cap \mathcal{R}| \leq \vartheta_{**}^2 n$.
         \item[(iii)] The end pairs of $Q$, denoted $(q_1,q_2)$ and $(q_1',q_2')$, are $\zeta_{**}$-connectable.
     \end{itemize}
     
     By Lemma~\ref{lem:reservoir}, every two disjoint $\zeta_{**}$-connectable pairs in $H$ are connected by at least $\vartheta_{**}|\mathcal{R}|^{3\ell + 1}/2$ tight paths of length $3(\ell + 1)$ in $H$ whose internal vertices belong to $\mathcal{R}$.

     Our goal now is to (one by one) join the end pairs of $\mathcal{W}$, $P_A$ and $Q$ via disjoint short  `connecting' paths in $\mathcal{R} - V(\mathcal{W}) - Q$ into an almost spanning cycle. 
     Let $(p_1,p_2), (p_1',p_2')$ be the end pairs of $P_A$. Note that $(p_1,p_2),(p_1',p_2'),(q_1,q_2),(q_1',q_2'),(w_1,w_2),(w_1',w_2')$ are all $\zeta_{**}$-connectable pairs in $H$. 
     
     Using Lemma~\ref{lem:reservoir} three times, connect $(q_1,q_2)$ to $(w_1,w_2)$, then $(w_1',w_2')$ to $(p_1,p_2)$, and then $(p'_1,p'_2)$ to $(q_1',q_2')$ each via a tight path of length $3(\ell +1)$. 
     We will choose these three tight connecting paths to be internally vertex disjoint from each other and the vertices of $Q$ and $\mathcal{W}$. This is possible as the bounds $|V(Q) \cap \mathcal{R}| \leq \vartheta_{**}^2 n $, $|V(\mathcal{W})| \leq 100\ell^2 \delta  n \ll \vartheta_{**}^2 n$ and $2\cdot 3(\ell+1) \ll  \vartheta_{**}^2 n$ mean that the number of vertices to avoid at each step is at most $ 3\vartheta_{**}^2 n \ll \vartheta_{**}\frac{\vartheta^2_{*} n}{4} \le \vartheta_{**}|\mathcal{R}|/2$. 
     For any vertices $x,y,w,z$, each vertex to avoid can be in at most $|\mathcal{R}|^{3\ell}$ tight $(x,y)$---$(z,w)$-paths on $3(\ell+1)$ edges with internal vertices in $|\mathcal{R}|$. Since there are at least $\vartheta_{**} |\mathcal{R}|^{3\ell+1}/2$ tight paths to choose from, we can therefore always find a path that is internally disjoint from the previously used vertices.
     
     As $|\mathcal{R}| \leq \vartheta_{*}^2 n$ and $|V(\hat{H})\setminus (\mathcal{R} \cup V(Q))| \le \vartheta_{*}^2 n$, the number of vertices not used in $\mathcal{W},Q,P_A$ or the final three `connecting' paths is at most $2\vartheta_{*}^2 n$.
     Thus we may apply Lemma~\ref{lem:absorb} to absorb these unused vertices into $P_A$, turning the almost spanning tight cycle into a tight Hamilton cycle $C$. Replacing $\mathcal{W}$ with $\mathcal{W}'$ (that is, replacing $\mathcal{S}_i$ with $\mathcal{S}_i'$ in each switcher) gives an alternative tight Hamilton cycle $C'$. Since $c(\mathcal{W}) \geq c(\mathcal{W}') + 4\delta n$, we have $c(C) \ge c(C') + 4 \delta n$. In particular, either  $c(C') \le - 2\delta n$, in which case $C'$ has at least $(1/2 + \delta)n$ blue edges, or $c(C) \ge 2\delta n$ in which case $C$ has at least $(1/2 + \delta)n$ red edges.
\end{proof}

\section{Proof of the Key Lemma}\label{sec:key_lemma}
In this section, we prove \cref{lem:key}, the Key Lemma.
In fact, it will be more convenient to prove the following alternative statement of Lemma~\ref{lem:key}.
\begin{restatable}[Key Lemma alternative statement]{lem}{keyalt}
\label{lem:key_lemma_alternative}
       Given \cref{setup},  
      suppose that $H$ contains no $\zeta$-connectable $7\ell$-bounded switchers. Then there exists a function $P:V(H) \rightarrow \{-1,+1\}$ and a constant $c \in \{-2,0,+2\}$ such that
      for all but $\gamma n^3$ robust edges $uxy$,
      \[c(uxy) = P(u) + P(x) + P(y) + c.\]
\end{restatable}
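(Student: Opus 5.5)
We follow the outline given before Lemma~\ref{lem:lots_of_switchers}: first derive a local rule for the colouring from the absence of short switchers, then make it global using the absence of long switchers. Throughout, "almost all" means all but $o_\gamma(1)$-fractions, controlled by $\zeta,\beta,1/\ell\ll\gamma$.

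\textbf{Step 1 (local rule from short switchers).} Let $u,v$ be vertices and let $xy,wz$ be two edges of $R_u\cap R_v$. We want to embed the short switcher $\mathcal{S}=\{abucd,wxvyz\}$, $\mathcal{S}'=\{abvcd,wxuyz\}$ with suitable renaming. This requires $\zeta$-connectable end pairs $ab,cd$ and tight paths around the pairs through $u$ and $v$. Robustness of $R_u$ and $R_v$ (Proposition~\ref{prop:robust}(iii)) together with the sizes $|V(R_v)|\ge 0.853n$ give many choices of neighbours. Counting shows that almost all pairs are $\zeta$-connectable. Since $\sum_v e(R_v)\ge 0.37n^3$, an averaging argument shows that a pair failing this appears in only $\zeta n\cdot n^2$ robust-edge incidences. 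Because no such switcher exists, the colour sums of $\mathcal{S}$ and $\mathcal{S}'$ agree. Writing out the edges, this says
\[c(uxy)-c(vxy)=c(uwz)-c(vwz)\]
for almost all such configurations. We therefore obtain a value $d(u,v)\in\{-2,0,2\}$ such that $c(uxy)-c(vxy)=d(u,v)$ for almost all $xy\in E(R_u\cap R_v)$, for almost all pairs $u,v$. Note that $|R_u\cap R_v|$ is large, since each has $\ge 0.37n^2$ edges inside a set of density $>0.85$.

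\textbf{Step 2 (consistency via long switchers).} We want $d(u,v)=p(u)-p(v)$ with $p$ integer-valued. Transitivity, $d(u,v)+d(v,w)=d(u,w)$, holds almost always by comparing on edges of $R_u\cap R_v\cap R_w$, which is nonempty by density, since $3\cdot 0.37>1/2\cdot$ area. The real content is that the structure is not "twisted". Along a closed robust walk of length $\ell$ in some link graph, the long switcher of Figure~\ref{fig:long_switcher} compares a long path threading the vertices with its shifted version. The $\ell$ short paths correct the differences at each central vertex, and the absence of such switchers forces the cyclic sum of the $d$ values to vanish. Since $\ell$ is odd, this also rules out a parity obstruction, so that $d(u,v)\in\{-2,0,2\}$ forces $p(u)-p(v)\in\{-2,0,2\}$ with $p$ taking two values. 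We normalise to $p=P+\text{const}$ with $P\in\{-1,+1\}$. Fix a typical $u_0$ and define $p(v)=d(v,u_0)$. This step is where I expect the main difficulty: the odd length $\ell$ is needed to exclude a bipartite-type colouring in which $d$ alternates. The long switcher must also be built with all $\ell+2$ paths having connectable ends inside $7\ell$ vertices, again via counting with Lemma~\ref{lem:connecting}-style robust path counts.

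\textbf{Step 3 (the form of the colouring).} Set $f(xy):=c(uxy)-P(u)$, which by Step 2 is independent of $u$ for almost all robust edges. Applying the same argument with the roles of the vertices permuted, using that a robust edge $uxy$ has $xy\in R_u$, and symmetrising, yields $f(xy)=P(x)+P(y)+c$ for a constant $c$ on almost all pairs. The fact that $c(e)=\pm1$ and $P=\pm1$ forces $c\in\{-2,0,2\}$ by parity. The exceptional edges are counted by summing the exceptions of Steps 1--3, which gives at most $\gamma n^3$.
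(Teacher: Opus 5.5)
\textbf{The main gap is Step 3.} Steps 1--2 give you, for typical data, $c(uxy)=P(u)+f(xy)$ whenever $xy$ is a typical edge of $\hat R_u$. The whole content of the lemma is that $h(xy):=f(xy)-P(x)-P(y)$ takes (almost) the same value on all pairs, and your Step 3 only asserts this. Permuting the roles of the vertices needs $ux\in R_y$ or $uy\in R_x$, and robustness of $uxy$ does not give this, since it guarantees only one direction. Even on edges that are robust in all three directions, you would only learn that $h$ agrees on the three pairs of that edge. You would then still need a connectivity statement for pairs linked through such edges, which you do not supply. The tight paths that robust link graphs actually provide, such as $q_1q_2u_1q_3q_4u_2\cdots$ with $q_iq_{i+1}\in\bigcap_u\hat R_u$, do not propagate $h$: each of their edges is known to be robust only in the direction of its $u_j$. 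So the local rule alone never compares $c(uxy)$ with $c(ux'y')$.

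This is exactly where the paper uses the long switchers. In Lemma~\ref{lem:colours_are_consistent}, take pairs $xy$, $x'y'$ joined by the gadget of Figure~\ref{fig:xycycle} inside $\bigcap_{u\in U}\hat R_u$. The paper compares the potential switchers $(S_{\vec u},S'_{\vec u})$ and $(T_{\vec u},T'_{\vec u})$ and sums over the cyclic rotations of $\vec u$. Everything cancels except $3c(u_jxy)-3c(u_jx'y')$ and the corrections $c(Q_v)-c(Q'_v)=3P(v)-3P(v')$. This gives $c(uxy)-P(x)-P(y)=c(ux'y')-P(x')-P(y')$. Lemma~\ref{lem:anotherconnecting} then supplies at least $2\zeta n$ indices $i$ whose links $\hat R_{w_i}$ (respectively $\hat R_{w'_i}$) share the required $\ell$-paths. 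Two disjoint good edges are chained as $g(vxy)=g(uxy)=g(uab)=g(u'ab)=g(u'x'y')=g(v'x'y')$, through an auxiliary pair $ab$ that is agreeable in $\hat R_{w_i}\cap\hat R_{w'_i}$. None of this appears in your proposal.

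\textbf{Steps 1 and 2.} In Step 1, $c(\mathcal S)=c(\mathcal S')$ does not say $c(uxy)-c(vxy)=c(uwz)-c(vwz)$. Writing $f_{uv}(e)=c(ue)-c(ve)$, it says only that $f_{uv}(ab)+f_{uv}(bc)+f_{uv}(cd)=f_{uv}(wx)+f_{uv}(xy)+f_{uv}(yz)$, i.e.\ equality of sums over disjoint $3$-edge paths. Getting a constant $d(u,v)$ from this needs three further ingredients:
\begin{enumerate}[(i)]
\item the shift argument, giving $f_{uv}(ab)=f_{uv}(de)$ on $4$-edge paths;
\item a configuration such as the tree $\mathcal T$, to force constancy on a component;
\item a bound on the components carrying the genuinely non-constant solutions $2,0,-2,2,0,-2,\dots$, namely the non-agreeable components of Lemma~\ref{annoying}.
\end{enumerate}
Conversely, Step 2 needs no long switcher and no parity of $\ell$. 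Once $d(u,w)=d(u,v)+d(v,w)$ is verified on an edge of a giant component of $\hat R_u\cap\hat R_v\cap\hat R_w$, your own choice $p(v)=d(v,u_0)$ already gives $p(u)-p(v)=d(u,v)$, so no ``twist'' is possible. Moreover, $d\in\{-2,0,2\}$ forces $p$ to take at most two values, and the oddness of $\ell$ plays no role here. In short, you have placed the long switchers where they are unnecessary and omitted them where they are essential.
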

One can check that this is equivalent to the original statement of \cref{lem:key}, where the bipartition is given by $X = P^{-1}(+1)$ and $Y= P^{-1}(-1)$ and the three cases $A,B,C$ correspond to $c = -2,0,+2$ respectively.

\subsection{Preliminaries on robust graphs}

The following lemma gives bounds on the sizes of vertex and edge sets of intersections of the robust subgraphs of our link graphs.  
\begin{lem}\label{lem:robust_intersections}
Let $u,v,w\in V(H)$. Given Setup \ref{setup}, we have
\begin{itemize}
    \item  $|V(R_u) \cap V(R_v)|\geq\frac{n}{\sqrt{2}}$,
    \item $e(R_u\cap R_v) \geq 0.22n^2$,  

\item $|V(R_u)\cap V(R_v)\cap V(R_w)|\geq \left(3\frac{2+\sqrt{2}}{4}-2 \right)n\geq 0.56n$,
\item $e(R_u\cap R_v\cap R_w)\geq 0.09n^2.$
\end{itemize}
\end{lem}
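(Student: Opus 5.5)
The plan is a plain inclusion--exclusion argument. The only inputs are the vertex and edge lower bounds in Proposition~\ref{prop:robust}(i) and~(ii); the robustness property~(iii) is not needed. Write $a:=\frac{2+\sqrt{2}}{4}$. By Proposition~\ref{prop:robust}(i), each of $V(R_u),V(R_v),V(R_w)$ has size at least $an$. Note that $\frac{2+\sqrt 2}{4}\approx 0.8536$.

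\emph{Vertex bounds.} Since all three sets lie in $V$, which has $n$ vertices, we get
\[
|V(R_u)\cap V(R_v)|\ge |V(R_u)|+|V(R_v)|-n\ge (2a-1)n=\frac{n}{\sqrt2},
\]
because $2a-1=\frac{\sqrt2}{2}$. For the triple intersection, the complements of the three sets have total size at most $3(1-a)n$. Hence the triple intersection has size at least $(1-3(1-a))n=(3a-2)n=\bigl(3\frac{2+\sqrt2}{4}-2\bigr)n\approx 0.5607n\ge 0.56n$.

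\emph{Edge bounds.} First I obtain a uniform lower bound on $e(R_v)$ from Proposition~\ref{prop:robust}(ii):
\[
e(R_v)\ge \Bigl(\tfrac34+\tfrac{\alpha}{2}\Bigr)\tfrac{n^2}{2}-\tfrac{(n-|V(R_v)|)^2}{2}.
\]
The right-hand side increases with $|V(R_v)|$, so it is smallest when $|V(R_v)|=an$. Since $(1-a)^2=\bigl(\frac{2-\sqrt2}{4}\bigr)^2=\frac{3-2\sqrt2}{8}\approx 0.02145$, this gives
\[
e(R_v)\ge \Bigl(0.375-0.01073\Bigr)n^2\ge 0.3642\,n^2
\]
for every $v$, even ignoring the $\alpha$ term. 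All of $R_u,R_v,R_w$ are subgraphs of the complete graph on $V$, which has $\binom n2<n^2/2$ edges. Edge-set inclusion--exclusion then gives
\[
e(R_u\cap R_v)\ge e(R_u)+e(R_v)-\tbinom n2\ge (0.7284-0.5)n^2\ge 0.22n^2.
\]
Similarly, the edge sets of the three graphs miss at most $3\bigl(\binom n2-0.3642n^2\bigr)$ pairs in total, so
\[
e(R_u\cap R_v\cap R_w)\ge 3(0.3642)n^2-2\tbinom n2\ge (1.0926-1)n^2\ge 0.09n^2.
\]

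\emph{Main obstacle.} There is no conceptual difficulty; the only delicate point is the triple edge bound, whose margin is about $0.0026n^2$. Two things have to be done with care there. First, I must check that the minimum of the bound in~(ii) really occurs at the smallest allowed $|V(R_v)|$, which holds because the subtracted term decreases as $|V(R_v)|$ grows. Second, the numerical constants $(1-a)^2$ and $3a-2$ must be evaluated accurately rather than rounded carelessly. If the margin were in doubt, the unused $+\frac{\alpha}{2}\cdot\frac{n^2}{2}$ terms and the slack $\binom n2<n^2/2$ could be kept to provide extra room.
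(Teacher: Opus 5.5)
Your proposal is correct and matches the paper's proof: inclusion--exclusion on the vertex and edge sets, using only parts (i) and (ii) of Proposition~\ref{prop:robust}. The paper also uses the same uniform bound $e(R_v)\ge \frac{3+2\sqrt2}{8}\cdot\frac{n^2}{2}\approx 0.3643n^2$, obtained by substituting the smallest allowed $|V(R_v)|$, and your numerics, including the tight margin in the triple-edge bound, are accurate.
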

\begin{proof}
Recall that by Proposition~\ref{prop:robust} (i) we have $|V(R_u)|\geq \frac{2+\sqrt{2}}{4} n$ for every $u\in V(H)$.
Let $u,v,w\in V(H)$. Then 
\[
|V(R_u)\cap V(R_v)|\geq |V(R_u)| + |V(R_v)| - n\geq 2\frac{2+\sqrt{2}}{4} n-n=\frac{n}{\sqrt{2}},
\]
and 
\[
|V(R_u)\cap V(R_v)\cap V(R_w)|\geq |V(R_u)|+|V(R_v)|+|V(R_w)| - 2n\geq \left(3\frac{2+\sqrt{2}}{4}-2 \right)n.
\]
Note that by Proposition~\ref{prop:robust} (ii)  we have
\[
e(R_v)\geq  \frac{3}{4}\cdot \frac{n^2}{2}-\frac{(n-|V(R_v)|)^2}{2}\geq \frac{6}{8}\cdot \frac{n^2}{2}-\frac{(n-\frac{2+\sqrt{2}}{4}n)^2}{2}= \frac{3+2\sqrt{2}}{8} \frac{n^2}{2}.
\]
for every $v\in V(H)$.
Therefore, 
\[
e(R_u\cap R_v) \geq e(R_u)+e(R_v)-\binom{n}{2}\geq \left(2 \frac{3+2\sqrt{2}}{8}-1\right) \frac{n^2}{2}\geq 0.22n^2 
\]
and
\[
e(R_u\cap R_v\cap R_w) \geq e(R_u)+e(R_v)+e(R_w)-2\binom{n}{2}\geq \left(3 \frac{3+2\sqrt{2}}{8}-2\right) \frac{n^2}{2}\geq 0.09n^2. \qedhere
\]
\end{proof}

\bigskip

We will also need some results on the abundance of connectable pairs, starting with the following fact.

\begin{fact}[{\cite[Fact 4.1]{3hamcycle}}]
\label{fact:connectable}
For any $\rho > 0$, there are at most $\rho n^3$  triples $(x,y,z)\in V(H)^3$ such that 
$xy\in R_z$ and the pair $xy$ fails to be $\rho$-connectable. 
\end{fact}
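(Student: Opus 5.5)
The plan is a direct double count, organised by the pair $xy$ rather than by the vertex $z$. Nothing from the robustness or degree machinery of Proposition~\ref{prop:robust} is needed: the bound follows from the definition of $\rho$-connectability alone.

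Fix $\rho>0$. Let $T$ be the set of triples $(x,y,z)\in V(H)^3$ with $xy\in E(R_z)$ and $xy$ not $\rho$-connectable. For a fixed ordered pair $(x,y)$ with $x\neq y$, consider the triples in $T$ whose first two coordinates are $(x,y)$.

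If $xy$ is $\rho$-connectable, there are no such triples. Otherwise, by Definition~\ref{def:connectable}, the set $U_{xy}=\{v\in V : xy\in E(R_v)\}$ has $|U_{xy}|<\rho n$. A triple $(x,y,z)$ with $xy\in E(R_z)$ is precisely one with $z\in U_{xy}$. So each non-connectable ordered pair $(x,y)$ contributes at most $|U_{xy}|<\rho n$ triples to $T$.

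Triples with $x=y$ contribute nothing, since $R_z$ is a simple graph and has no loops. Summing over the at most $n^2$ ordered pairs $(x,y)$ gives
\[
|T| \;=\; \sum_{(x,y)\,:\,xy \text{ not } \rho\text{-connectable}} |U_{xy}| \;<\; n^2\cdot \rho n \;=\; \rho n^3 .
\]

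There is no real obstacle here. The only care needed is to check that the definition of $U_{xy}$ matches the condition $xy\in R_z$ exactly, which it does, and that the count uses ordered triples consistently with the statement. Since the bound already holds for ordered pairs, it holds a fortiori for unordered ones.
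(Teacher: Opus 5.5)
Your proof is correct: for a non-$\rho$-connectable pair $xy$, the admissible $z$ are exactly the elements of $U_{xy}$, of which there are fewer than $\rho n$, and summing over at most $n^2$ ordered pairs gives the bound. This paper imports the fact from \cite{3hamcycle} without proof, and your argument is the standard counting argument it rests on; it needs nothing beyond Definition~\ref{def:connectable}.
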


\begin{define}\label{def:bad}
     For each vertex $u \in V(H)$, let $\hat{R}_u$ be the subgraph of $R_u$ subgraph obtained by keeping only $\zeta$-connectable pairs.  We say $u$ is \emph{extendable} if $e(R_u)-e(\hat{R}_u)< n^2/100$, and \emph{non-extendable} otherwise.
     
\end{define}
\begin{lem}
\label{lem:badvertices}The number of non-extendable vertices is at most $100\zeta n$.
\end{lem}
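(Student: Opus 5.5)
This is a double-counting argument based on Fact~\ref{fact:connectable} with $\rho=\zeta$. We count the set $T$ of triples $(x,y,z)\in V(H)^3$ such that $xy\in E(R_z)$ but the pair $xy$ is not $\zeta$-connectable. Fact~\ref{fact:connectable} gives $|T|\le \zeta n^3$.

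Next we bound $|T|$ from below using the non-extendable vertices. Fix a vertex $z$. Each edge $xy\in E(R_z)\setminus E(\hat{R}_z)$ is an edge of $R_z$ that is not $\zeta$-connectable. It therefore gives the two ordered triples $(x,y,z)$ and $(y,x,z)$ in $T$. Since $\zeta$-connectability is a property of the unordered pair $xy$ (the set $U_{xy}$ is symmetric in $x$ and $y$), both orderings are counted by the Fact. So $z$ contributes at least $2\bigl(e(R_z)-e(\hat{R}_z)\bigr)$ triples to $T$, and different choices of $z$ give distinct triples. If $z$ is non-extendable, this contribution is at least $2n^2/100=n^2/50$.

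Write $N$ for the number of non-extendable vertices. Summing over these vertices gives $N\cdot n^2/50\le |T|\le \zeta n^3$, so $N\le 50\zeta n\le 100\zeta n$. The bound holds even if only one ordering per edge is counted, which gives exactly $N\cdot n^2/100\le \zeta n^3$ and hence $N\le 100\zeta n$.

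The only point needing care is how Fact~\ref{fact:connectable} counts: whether it uses ordered triples, and whether it takes $\rho$ to be the same constant as the connectability parameter. Taking $\rho=\zeta$ matches Definition~\ref{def:bad}, and the argument loses at most a factor of $2$ in either reading, which the stated constant $100$ absorbs.
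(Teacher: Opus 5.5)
Your argument is correct and is essentially the paper's proof. You apply Fact~\ref{fact:connectable} with $\rho=\zeta$ and observe that each non-extendable $z$ contributes at least $n^2/100$ to $\sum_z \bigl(e(R_z)-e(\hat{R}_z)\bigr)\le \zeta n^3$. Your remark on ordered versus unordered triples is extra care: the paper counts one ordering per edge and obtains exactly the bound $100\zeta n$.
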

\begin{proof}
By Fact~\ref{fact:connectable}, there are at most $\zeta n^3$ triples $(x,y,z)\in V^3$ such that $xy\in R_z$ fails to be $\zeta$-connectable. If the number of non-extendable vertices exceeds $100\zeta n$, then
$$
\sum_{z}\big(e(R_z)-e(\hat{R}_z)\big)> \frac{n^2}{100}100 \zeta n=\zeta n^3,
$$
a contradiction.
\end{proof}

\begin{lem} Let $u,v,w\in V(H)$ be extendable. Then 
\begin{itemize}
\item $e(\hat{R}_u \cap \hat{R}_v)\geq 0.2n^2$,
\item $e(\hat{R}_u \cap \hat{R}_v\cap \hat{R}_w)\geq 0.06n^2$.
\end{itemize}
\label{lem:Rhatint}
\end{lem}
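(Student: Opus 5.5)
The plan is to combine the intersection bounds from Lemma~\ref{lem:robust_intersections} with the definition of extendability: each $\hat{R}_u$ is obtained from $R_u$ by deleting fewer than $n^2/100$ edges. For any graphs on the common vertex set $V$, intersecting with a subgraph can only lose the edges that were removed. So we have the inclusion
\[
(R_u\cap R_v)\setminus(\hat{R}_u\cap \hat{R}_v)\subseteq \bigl(E(R_u)\setminus E(\hat{R}_u)\bigr)\cup\bigl(E(R_v)\setminus E(\hat{R}_v)\bigr),
\]
and similarly for three graphs with a union of three difference sets.

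Concretely, since $u$ and $v$ are extendable, the right-hand side has fewer than $2n^2/100$ edges. By Lemma~\ref{lem:robust_intersections}, $e(R_u\cap R_v)\ge 0.22n^2$, which gives
\[
e(\hat{R}_u\cap \hat{R}_v)\ \ge\ e(R_u\cap R_v)-\bigl(e(R_u)-e(\hat{R}_u)\bigr)-\bigl(e(R_v)-e(\hat{R}_v)\bigr)\ >\ 0.22n^2-0.02n^2=0.2n^2 .
\]
For three extendable vertices the same argument uses $e(R_u\cap R_v\cap R_w)\ge 0.09n^2$ and subtracts fewer than $3n^2/100$, giving more than $0.06n^2$.

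There is no genuine obstacle here. The only point to check is that the intersection bounds in Lemma~\ref{lem:robust_intersections} hold for arbitrary $u,v,w$, including when some of them coincide. In that case the intersection is at least the corresponding single or pairwise intersection, so the bounds remain valid. We also need the numerical slack $0.22-0.02=0.2$ and $0.09-0.03=0.06$, which is exact with room from the strict inequality in the definition of extendable.
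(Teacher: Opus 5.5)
Your proposal is correct and matches the paper's proof: start from the bounds $0.22n^2$ and $0.09n^2$ in Lemma~\ref{lem:robust_intersections}, then subtract the fewer than $n^2/100$ edges removed from $R_a$ to form $\hat{R}_a$ for each extendable vertex $a$. Your aside about coinciding vertices is harmless but unnecessary, since Lemma~\ref{lem:robust_intersections} is already stated and proved for arbitrary $u,v,w$.
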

\begin{proof}
By Lemma~\ref{lem:robust_intersections}, $e(R_u \cap R_v)\geq 0.22n^2$ and $e(R_u \cap R_v\cap R_w)\geq 0.09n^2$. Further, since $u, v, w$ are extendable, we have $|E(R_a)\setminus E(\hat{R}_a)| < n^2/100$ for each $a \in \{u,v,w\}$.
Combining these bounds, we obtain
\begin{align*}
e(\hat{R}_u \cap \hat{R}_v) &\geq e(R_u \cap R_v)- |E(R_u)\setminus E(\hat{R}_u)| - |E(R_v)\setminus E(\hat{R}_v)| \\
&\geq 0.22n^2-0.01n^2-0.01n^2=0.2n^2,
\end{align*}
and
\begin{align*}
e(\hat{R}_u \cap \hat{R}_v\cap \hat{R}_w) &\geq e(R_u \cap R_v\cap R_w)- \left(\sum_{x \in \{u,v,w\}}|E(R_x)\setminus E(\hat{R}_x)|\right)\\
&\geq 0.09n^2-0.03n^2=0.06n^2. \qedhere
\end{align*}
\end{proof}

\bigskip

\subsection{Edges that agree}

The goal of this section is to identify pairs of edges that are mutually consistent with one of the three bipartitions in the cases of the Key Lemma. We start by investigating pairs of edges $uxy$, $vxy$ where $xy \in \hat{R}_u \cap \hat{R}_v$ and we will show in~\cref{lem:c_uv} that most of these pairs of edges `agree', that is they are consistent with the same bipartition. However, there are possibly a small number of these pairs of edges that we are unable to say anything about directly. We describe these \emph{non-agreeable} edges via the following definition.

\begin{define} For a graph $G$, call a component of $G$ \emph{non-agreeable} if it is possible to label each edge using labels in $\{-2,0,+2\}$ so that not all labels receive the value $0$, but the labels on each path on 3 edges sum to $0$. An edge is \emph{non-agreeable} if it is in a non-agreeable component. 
\end{define}

Note that a component of $G$ that is agreeable must contain a path on 3 edges. We will show that there are only linearly many non-agreeable edges in a graph.

\begin{lem}
\label{annoying}
A non-agreeable component on $n$
vertices has at most $6n$ edges. 
\end{lem}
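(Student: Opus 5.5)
The plan is to argue by contradiction via a dense core. Let $C$ be a non-agreeable component on $n$ vertices. Fix a labelling $f\colon E(C)\to\{-2,0,+2\}$ that is not identically zero and in which every path $abcd$ on $3$ edges satisfies $f(ab)+f(bc)+f(cd)=0$. Suppose for contradiction that $e(C)>6n$. I will show that $f\equiv 0$ on a dense core of $C$, and then that this zero spreads to all of $C$.

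\emph{Step 1 (dense core).} Repeatedly delete vertices of degree at most $6$. Each deletion removes at most $6$ edges, and $e(C)>6n$, so the process stops at a nonempty subgraph $K\subseteq C$ with $\delta(K)\ge 7$.

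\emph{Step 2 (labels vanish on $K$).} I would first show that all $K$-edges at a vertex carry the same label.
\begin{itemize}
\item Fix an edge $bc$ of $K$ and two neighbours $a,a'\in N_K(b)\setminus\{c\}$. Since $\deg_K(c)\ge 7$, we can choose $d\in N_K(c)\setminus\{b,a,a'\}$.
\item The paths $abcd$ and $a'bcd$ give $f(ab)=-f(bc)-f(cd)=f(a'b)$. So all $K$-edges at $b$ other than $bc$ share one label.
\item Now run this argument with two different choices of $c\in N_K(b)$. Because $\deg_K(b)\ge 3$, the two resulting classes overlap and together cover all $K$-edges at $b$. Hence every $K$-edge at $b$ has a common label $s_b$.
\end{itemize}
Next I would show these labels are zero. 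For an edge $bc$ of $K$, the edge label equals both $s_b$ and $s_c$, so $s$ is constant on the component of $K$ containing $b$. Take any path on $3$ edges inside that component, which exists since $\delta(K)\ge 7$. It gives $3s=0$, so $s=0$. This shows $f\equiv 0$ on $K$.

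\emph{Step 3 (propagation to all of $C$).} I would show by BFS from $K$ that every edge of $C$ has label $0$, contradicting non-agreeability. The inductive invariant is that every vertex $w$ already reached has at least two incident edges known to be zero, leading to distinct vertices, at least one of which itself has a further zero edge. Vertices of $K$ satisfy this with room to spare, since they have $\ge 7$ zero edges inside $K$.
\begin{itemize}
\item Take a new edge $uv$ with $v$ already reached.
\item Choose a zero edge $vw$ and a zero edge $wp$ with $u\notin\{w,p\}$ and $p\ne v$. The invariant and the spare zero edges leave enough choices to do this.
\item The path $uvwp$ then gives $f(uv)=0$.
\item The new vertex $u$ now has the zero edge $uv$, and $v$ has further zero edges, so the invariant is maintained in the weak form actually needed for the next step.
\end{itemize}
Connectivity of $C$ ensures every edge is eventually reached.

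I expect the main obstacle to be the bookkeeping in Step 3. The vertices in a path on $3$ edges must be distinct, so one must check that the invariant always provides a path $uvwp$ avoiding $u$. The cleanest route is probably to order vertices by distance to $K$ and always route back toward $K$, where the high degree supplies spare choices. Step 2 is a routine check given minimum degree $7$, and the constant $6$ is exactly what Step 1 requires.
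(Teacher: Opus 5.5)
Your argument is correct, but it takes a different route from the paper. The paper fixes a longest path $P=v_1\cdots v_k$ and shows that, up to permutation, the labels along $P$ follow the pattern $2,0,-2,2,0,-2,\dots$, since two consecutive zeros would spread to give $f\equiv 0$. From this it deduces that each $v_i$ with $3\le i\le k-2$ has $N(v_i)\subseteq\{v_{i-2},v_{i-1},v_{i+1},v_{i+2}\}$ and that every edge meets $V(P)$, and then counts $e(G)\le 4(k-2)+2n\le 6n$. You instead use degeneracy: large minimum degree forces all labels at a vertex to coincide and hence to vanish, and the zeros then spread to the whole component. This avoids the case analysis on chords, path endpoints and pendant vertices. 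It also proves more than is needed. Step 2 only uses minimum degree $4$ in $K$, and Step 3 only uses $\deg_K\ge 2$, so deleting vertices of degree at most $3$ already suffices. Hence non-agreeable components are $3$-degenerate with at most $3n$ edges, and the constant $6$ is not what your argument requires. The paper's route, in exchange, gives an explicit structural picture of what a non-agreeable component looks like.

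The one place to tighten is Step 3. The invariant as stated is not literally preserved: a newly reached vertex may have only one known zero edge. Also, in a triangle $uvw$ the only zero edge at $w$ besides $wv$ may go to $u$ itself. The distance ordering you suggest makes this rigorous. Since $K$ is induced, every edge outside $K$ has an endpoint outside $V(K)$. Set $D(x)=\mathrm{dist}_C(x,V(K))$ and induct on $D(u)+D(v)$. Take $uv\notin E(K)$ with $D(u)\ge D(v)=m$, and choose $w,p$ as follows. If $m\ge 2$, take $w\in N(v)$ with $D(w)=m-1$ and $p\in N(w)$ with $D(p)=m-2$. If $m=1$, take $w\in N(v)\cap V(K)$ and any $p\in N_K(w)$. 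If $m=0$, take $w\in N_K(v)$ and $p\in N_K(w)\setminus\{v\}$. The four vertices $u,v,w,p$ are distinct: compare $D$-values, using $u\ne v$, $w\ne p$ and $p\ne v$ where needed. The edges $vw$ and $wp$ either lie in $K$ or have strictly smaller distance sum than $uv$. So the path $uvwp$ gives $f(uv)=-f(vw)-f(wp)=0$.
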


\begin{proof}
Let $G$ be a non-agreeable component, and fix a labelling
$w:E(G)\to\{-2,0,2\}$ witnessing this. Choose a longest path
$P=v_1v_2\cdots v_k$, and let $a_i := w(v_iv_{i+1})$. If $k \le 3$, then $G$ is a star or triangle and $e(G) \le n$. Therefore we will assume $k \ge 4$. 

If $P$ contains two consecutive edges with weight 0, then (as every copy of $P_3$ has weight 0) every edge of $P$ has weight 0. Now any edge incident to $P$ also has weight 0 (as it forms a 3 edge path with two edges already labelled 0). By induction on the distance of an edge from $P$, the same argument shows
that every edge in $G$ has weight $0$, a contradiction. So $P$ contains an edge with non-zero weight. So, up to permuting the values $2,0,-2$,
the weights along $P$ are $2,0,-2,2,0,-2,\ldots$. 

Let $3\le i\le k-2$. We show that $d(v_i) \le 4$. First suppose that
$x \in X := V(G) \setminus P$ is adjacent to $v_i$. Then both
$v_{i-2}v_{i-1}v_ix$ and $xv_iv_{i+1}v_{i+2}$ are paths of length three.
Hence $w(v_ix)=-(a_{i-2}+a_{i-1})=a_i$ and $w(v_ix)=-(a_i+a_{i+1})=a_{i+2}$, a contradiction as $a_i\ne a_{i+2}$. Now let
$e=v_iv_j$ be a chord with $|i-j|\ge 3$. Here the paths
$v_{i-2}v_{i-1}v_iv_j$ and $v_jv_iv_{i+1}v_{i+2}$ give a contradiction as above. So $N(v_i) \subseteq \{v_{i-2},v_{i-1},v_{i+1},v_{i+2}\}$, as required. By maximality of $P$, we obtain in addition that each vertex of $X$ is necessarily a neighbour of $v_2$ or $v_{k-1}$. 

Note that $v_1$ and $v_k$ cannot have a neighbour outside $P$, as otherwise $P$ is not the longest path. Thus by the observations above, $N(v_1) \subseteq \{v_2,v_3,v_{k-1},v_k\}$ and so $d(v_1) \le 4$ and correspondingly $d(v_k) \le 4$. 

Now consider $W = N(v_2) \setminus V(P)$. We will
show that vertices in $W$ are only adjacent to $v_2$. Suppose not and let $x \in W$ and $y \ne v_2$ be a neighbour of $x$. Note that $y \ne v_1$ as $x \not\in N(v_1) \subseteq V(P)$. Moreover $y \ne v_3$: if $3\le k-2$ then $x \not\in N(v_3) \subseteq V(P)$, and if $3=k-1$ then $y = v_3$ would result in the path $v_1v_2xv_3v_4$, which is longer than $P$. Thus both $v_1v_2xy$ and $v_3v_2xy$ are copies of $P_3$ and have weight 0. In particular, $w(v_1v_2) = w(v_2v_3)$, which is a contradiction as $a_1 \ne a_2$.
By a corresponding argument, vertices in $W' = N(v_{k-1})\setminus V(P)$ are only adjacent to $v_{k-1}$.

We conclude that every edge of $G$ contains a vertex of $P$. Hence, we have a crude bound of
$$e(G) \le \sum_{i=1}^k d(v_i) \le 4(k-2) + 2n  \le 6n.$$

\end{proof}
With a little more care it is possible to obtain a better bound on the number of non-agreeable edges, but the crude bound of $6n$ will suffice for our purposes.

\bigskip

For agreeable edges $xy \in \hat{R}_u \cap \hat{R}_v$, we are able to prove the following important result regarding a relationship between the colours of $uxy$ and $vxy$. This will naturally lead to the definition of the bipartition needed for the Key Lemma.

\begin{lem}\label{lem:c_uv}
Given Setup~\ref{setup}, assume $H$ contains no $\zeta$-connectable $7\ell$-bounded switchers. For each $u,v \in V(H)$ with $u,v$ extendable, the following holds. All agreeable edges $xy\in \hat{R}_u \cap \hat{R}_v$, satisfy the same one of the following: 
\begin{enumerate}[(i)]
\item $uxy$ is red and $vxy$ is blue;
\item $uxy$ and $vxy$ are the same colour; or
\item $uxy$ is blue and $vxy$ is red.
\end{enumerate}
 In particular, there is a constant $c_{uv} \in \{+2,0,-2\}$ such that  $c(uxy) - c(vxy) = c_{uv}$ for all agreeable edges $xy\in\hat{R}_u \cap \hat{R}_v$.
\end{lem}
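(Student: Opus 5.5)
Fix extendable $u,v$ and let $G := \hat{R}_u \cap \hat{R}_v$. For each edge $xy \in E(G)$ define the label $w(xy) := c(uxy) - c(vxy) \in \{-2,0,+2\}$. Both $uxy$ and $vxy$ are edges of $H$, since $xy \in R_u \subseteq L_u$ and $xy\in R_v\subseteq L_v$. The plan is to show that the labels on every path of three edges in $G$ are compatible with a short switcher, and then to deduce that $w$ is constant on each agreeable component. Finally I would use the robustness of $R_u$ and $R_v$, via the short switcher of Figure~\ref{fig:short_switcher}, to show that the constant is the same across different agreeable components.

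\textbf{Step 1: local constraint.} Consider any path $abcd$ in $G$ whose vertices are distinct from $u,v$. The tight paths $a\,b\,u\,c\,d$ and $a\,b\,v\,c\,d$ have edge sets $\{abu,buc,ucd\}$ and $\{abv,bvc,vcd\}$ respectively. Both exist since $ab,bc,cd \in R_u\cap R_v$, and they have the same end pairs $(a,b)$ and $(c,d)$, which are $\zeta$-connectable because they are edges of $\hat R_u\cap\hat R_v$. So $(\{abucd\},\{abvcd\})$ is a $\zeta$-connectable $5$-bounded potential switcher. Since $H$ has none, the colour sums agree:
\[w(ab)+w(bc)+w(cd)=0.\]
(This is even simpler than the two-path short switcher; if one insists on the paper's form with two paths, pair it with a second disjoint path.) The few paths through $u$ or $v$ I would handle by deleting $u,v$ from $G$ at the outset; this changes nothing essential because those edges are $O(n)$.

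\textbf{Step 2: constant on components.} Now take an agreeable component $K$. I would argue that every three-edge path has labels that are either all $0$, or some cyclic shift of $2,0,-2$. Propagating along paths as in the proof of Lemma~\ref{annoying}, a non-zero labelling satisfying the constraint of Step 1 exists only when $K$ is non-agreeable. Hence, by definition of agreeable, $w \equiv 0$ on $K$?

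This exposes the real issue. The constraint in Step 1 is shift-invariant only for the differences, so in fact I should label with $w(xy) - c_{K}$. The right formulation is therefore to use the two-path short switcher $\{abucd, wxvyz\}$ versus $\{abvcd, wxuyz\}$ on disjoint paths $abcd$ and $wxyz$ in $G$. This gives
\[w(ab)+w(bc)+w(cd) = w(wx)+w(xy)+w(yz).\]
So the sum $s$ of labels over a three-edge path is the same value $s$ for all pairs of disjoint paths. Since $G$ has at least $0.2n^2$ edges by Lemma~\ref{lem:Rhatint}, any two paths are disjoint from some third path, and so $s$ is a global constant. Setting $w'(e) := w(e) - s/3$ then gives labels summing to $0$ on every three-edge path.

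I expect the main obstacle to be the integrality of $s/3$ together with the reduction back to the agreeable definition. Since $w(e)\in\{-2,0,2\}$, I would argue as follows. In an agreeable component, one first shows that the labels along any three-edge path are all equal: if they were not equal, the difference labels $w(e)-w(e')$ would produce a non-zero $\{-2,0,2\}$-labelling, either by the argument of Lemma~\ref{annoying} or by case analysis showing that the path-sum constraint forces period-$3$ patterns. Equal labels then give $s=3w$, so $w$ is constant on every agreeable component and equals $s/3$ there. Because $s$ is global, this common value $c_{uv}:=s/3 \in\{-2,0,2\}$ is the same for all agreeable edges, which yields (i)--(iii).
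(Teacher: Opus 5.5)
\textbf{There is a genuine gap, at the step you flagged as the main obstacle.} You claim that in an agreeable component the labels on every $3$-edge path must be equal, since otherwise one could build a non-zero zero-sum $\{-2,0,2\}$-labelling. This is false, so the integrality of $s/3$ does not follow.

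Take the spider with centre $m$ and three legs $ma_1a_2a_3$, $mb_1b_2b_3$, $mc_1c_2c_3$, and suppose it carries a zero-sum labelling.
\begin{itemize}
\item Comparing the two $3$-paths through $m$ between legs $i$ and $j$ (say $a_2a_1mb_1$ and $a_1mb_1b_2$) shows that all middle labels agree; call the common value $\sigma$.
\item Comparing these with the paths along the legs shows that the first and last labels of all legs agree; call the common value $\tau$.
\item Then $2\tau+\sigma=0$, which in $\{-2,0,2\}$ forces $\tau=\sigma=0$.
\end{itemize}
So the spider is agreeable. Yet labelling every middle leg edge $2$ and every other edge $0$ makes every $3$-edge path sum to $2$.

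Hence agreeability of a single component cannot exclude $s\in\{\pm2,\pm4\}$. The shifted labelling $w-s/3$ you would need takes values outside $\{-2,0,2\}$, and the definition of agreeable says nothing about such labellings.

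A smaller point concerns your Step~1 pair $(\{abucd\},\{abvcd\})$. It is not a potential switcher at all, because the two vertex sets differ ($u$ versus $v$). That, rather than any shift-invariance, is why the second path with $u$ and $v$ swapped is needed. Had Step~1 been valid, it would wrongly force $c_{uv}=0$.

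\textbf{What is missing is an anchor.} You need to use the density of $G=\hat R_u\cap\hat R_v$ to force $s=3c$ with $c\in\{-2,0,2\}$. The paper does this in three steps:
\begin{itemize}
\item it derives $f(ab)=f(de)$ along $4$-edge paths;
\item it shows $f$ is constant on any component containing the tree $\mathcal{T}$;
\item it finds $\mathcal{T}$ via Erd\H{o}s--Stone--Simonovits, since $e(G)\ge 0.2n^2$.
\end{itemize}
It then takes $c_{uv}$ to be that constant and compares every other agreeable component against it.

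In your framework, with $s$ already shown to be global, a shorter anchor suffices. $G$ contains a $C_4$ by K\H{o}v\'ari--S\'os--Tur\'an. The four $3$-paths of this $C_4$ all have sum $s$, so its four labels are equal, giving $s=3c$.

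With that in hand, your final step goes through. If $c=\pm2$, then $s=\pm6$ forces label $c$ on every edge lying on a $3$-path, and every edge of an agreeable component lies on one. If $c=0$, then $w$ is itself a zero-sum $\{-2,0,2\}$-labelling of each agreeable component, so agreeability gives $w\equiv0$.
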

\begin{proof}
    Fix $u,v \in V(H)$. We will show that there is a constant $c_{uv} \in \{+2,0,-2\}$  such that  $c(uxy) - c(vxy) = c_{uv}$ for all edges $xy\in\hat{R}_u \cap \hat{R}_v$ except for non-agreeable edges. Then $c_{uv} = 2$ implies case (i), $c_{uv} = 0$ implies case (ii) and $c_{uv} = -2$ implies case (iii). To that end, let $f(xy) = c(uxy) - c(vxy)$ for $xy \in \hat{R}_u \cap \hat{R}_v$. Our goal is to show that $f$ is a constant function (on agreeable edges).
    Note that since $c(uxy), c(vxy)$ are equal to $\pm 1$ we have  $f(xy) = c(uxy) - c(vxy) \in \{-2, 0, +2\}$ for all $xy$. 
    \begin{claim}
    \label{claim:disjpath}
    Let $abcd$ and $wxyz$ be vertex disjoint paths in $\hat{R}_u\cap \hat{R}_v$. Then
    $$
             f(ab) + f(bc) + f(cd) = f(wx) + f(xy) + f(yz).
    $$
    \end{claim}
    \begin{claimproof}
    Let $abcd$ and $wxyz$ be vertex disjoint paths in $\hat{R}_u\cap \hat{R}_v$. 
    Consider the potential switcher $(\mathcal{S},\mathcal{S}')$ where $\mathcal{S} = \{abucd, wxvyz\}$ and $\mathcal{S}' = \{abvcd, wxuyz\}$.
    This is a `short' potential switcher, as shown in~\cref{fig:short_switcher} earlier.
     Since $H$ contains no $\zeta$-connectable $7\ell$-bounded switchers, we know that
     $c(\mathcal{S}) = c(\mathcal{S}'),$ which implies the claim.
     \end{claimproof}
         \begin{claim}
    \label{claim:path5}
    Let $abcde$ be a path in $\hat{R}_u\cap \hat{R}_v$. Then $f(ab)  = f(de)$.
    \end{claim}
\begin{claimproof}
Since $u,v\in V(H)$ are extendable, by Lemma~\ref{lem:Rhatint} the graph $\hat{R}_u \cap \hat{R}_v$ contains quadratically many edges, and thus there is some path $wxyz$ in $\hat{R}_u\cap \hat{R}_v$ vertex disjoint from $abcde$. Applying Claim~\ref{claim:disjpath} to the pairs $abcd, wxyz$ and $bcde,wxyz$ yields a pair of equations, which together show that $f(ab) + f(bc) + f(cd) = f(bc) + f(cd)+ f(de)$. Cancelling terms gives $f(ab) = f(de)$. 
\end{claimproof}

    Let $\mathcal{T}$ be the tree with vertices $z_1,z_2,\ldots, z_{10}$ and edges $z_1z_2, z_2z_3, z_3z_4,z_4z_5,z_5z_6$, $z_6z_7$, $z_7z_8$, $z_4z_9$ and $z_5z_{10}$, so $\mathcal{T}$ consists of a path on $7$ edges with two pendant edges attached to the 4th and 5th vertices -- see Figure~\ref{fig:treeT}.

\begin{figure}[h]
    \centering
    \includegraphics[scale = 1]{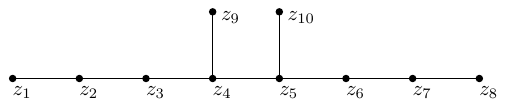}
    \caption{The tree $\mathcal{T}$ on ten vertices}
    \label{fig:treeT}
\end{figure}
            \begin{claim}
    \label{claim:T}
    Let $C$ be a connected component of $\hat{R}_u\cap \hat{R}_v$ containing a copy of $\mathcal{T}$. Then $f(e)=f(e')$ for all $e,e'\in E(C)$.
    \end{claim}
\begin{claimproof}
Let $T$ be a copy of $\mathcal{T}$ in $C$.
 Applying Claim~\ref{claim:path5}, for the edges of $T$ eight times gives
    \begin{align*}f(z_3z_4) &= f(z_6z_7) = f(z_4z_{9}) = f(z_1z_2) = f(z_4z_5)= f(z_7z_8) = f(z_5z_{10}) = f(z_2z_3) = f(z_5z_6).  
    \end{align*}
    In particular, $f$ is constant on $T$. 
    
    Consider an edge $ab$ in $C$.  Let $P$ be a shortest path from $ab$ to $T$; and suppose without loss of generality $P$ starts at $b$. By the structure of $T$, no matter which vertex in $T$ the path $P$ ends on we can extend $P$ to a path $P'$ starting with $ab$ and ending with  an edge $z_iz_j \in T$ such that the number of edges in $P$ is $1 \pmod{3}$. Thus $f(ab) = f(z_iz_j)$, by repeatedly applying Claim~\ref{claim:path5} along $P'$. Since the edge $ab$ was arbitrary, we conclude that $f$ is constant on $C$.
\end{claimproof}
   Since $u,v\in V(H)$ are extendable, Lemma~\ref{lem:Rhatint} implies that the graph $\hat{R}_u \cap \hat{R}_v$ has quadratically many edges. Hence $\hat{R}_u \cap \hat{R}_v$ contains a copy of $\mathcal{T}$ by the classical Erd\H{o}s--Stone--Simonovits' theorem~\cite{esimonovits66, estone46}. Let $C$ be a connected component containing a copy of $\mathcal{T}$ in $\hat{R}_u\cap \hat{R}_v$.
   By Claim~\ref{claim:T}, $f$ is constant on $C$. Let $c_{uv}$ be the value of $f$ on $C$. Now consider any component $C' \neq C$. As $C'$ is agreeable, every edge of $C'$ is contained in a 3-edge path. Let $P=z_1z_2z_3z_4$ be any length 3 path in $C'$.
   Applying Claim~\ref{claim:disjpath} to $P$ and a path of length 3 in $C$, we have 
   $$
   f(z_1z_2) + f(z_2z_3)+f(z_3z_4) = 3c_{uv}.
   $$
    If $c_{uv} = \pm 2$, then $f(e) = c_{uv}$ for all $e \in P$, which completes the proof in this case. If $c_{uv} = 0$, then every path in $C'$ has zero sum. But as $C'$ is agreeable, the only way this is possible is for $f(e) = c_{uv} = 0$ for all $e \in C'$. 
\end{proof}

We are now able to define the bipartition needed in the Key Lemma.
\begin{cor}\label{cor:c(uxy) - c(vxy)}
Given Setup~\ref{setup}, assume $H$ contains no $\zeta$-connectable $7\ell$-bounded switchers. Then there is a function $P:V(H) \rightarrow \{-1,+1\}$ such that for all extendable vertices $u,v$ and all agreeable edges $xy \in \hat{R}_u \cap \hat{R}_v$, we have
\[P(u) - P(v) =c_{uv} =  c(uxy) - c(vxy).\]
\end{cor}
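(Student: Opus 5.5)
We first establish that the constants $c_{uv}$ of Lemma~\ref{lem:c_uv} behave like differences of a potential: they are antisymmetric and additive. Antisymmetry $c_{vu}=-c_{uv}$ is immediate from the definition, since $c(vxy)-c(uxy) = -(c(uxy)-c(vxy))$ on the common set of agreeable edges of $\hat R_u\cap\hat R_v$. For additivity, fix extendable $u,v,w$. By Lemma~\ref{lem:Rhatint}, $\hat R_u\cap\hat R_v\cap\hat R_w$ has at least $0.06n^2$ edges. By Lemma~\ref{annoying}, the non-agreeable edges of each of the three pairwise intersections number at most $6n$ in total (summing over components, each contributing at most $6$ times its vertex count). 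Removing at most $18n \ll 0.06n^2$ edges, we therefore find an edge $xy$ which is agreeable in all three of $\hat R_u\cap\hat R_v$, $\hat R_v\cap\hat R_w$ and $\hat R_u\cap\hat R_w$. Evaluating at this edge gives
\[c_{uv}+c_{vw} = \bigl(c(uxy)-c(vxy)\bigr)+\bigl(c(vxy)-c(wxy)\bigr) = c(uxy)-c(wxy) = c_{uw}.\]

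Next we construct $P$. Fix an extendable vertex $u_0$, and for extendable $v$ set $P(v) := c(vxy)$ for some (equivalently, any) agreeable $xy\in\hat R_{u_0}\cap\hat R_v$, after a suitable normalisation. A cleaner way to organise this is via the relation $v\sim w$ iff $c_{vw}=0$. Additivity shows that $\sim$ is an equivalence relation on extendable vertices. Moreover, since $c_{vw}\in\{-2,0,2\}$, additivity rules out three extendable vertices with $c_{uv}=c_{vw}=2$, because then $c_{uw}=4$, which is impossible. Hence there are at most two classes. If there are two classes $A,B$, additivity forces $c_{ab}$ to take a single constant value in $\{\pm 2\}$ for all $a\in A$, $b\in B$; relabelling so that this value is $+2$, we set $P\equiv +1$ on $A$ and $P\equiv -1$ on $B$. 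If there is only one class, we set $P\equiv +1$ on all extendable vertices. Non-extendable vertices are assigned $+1$ arbitrarily, since the statement only concerns extendable $u,v$.

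Finally, we verify that $P(u)-P(v)=c_{uv}$ in every case, which together with Lemma~\ref{lem:c_uv} gives the stated identity for all agreeable $xy$. The only step that needs real care is the existence of a common agreeable edge in the triple intersection used for additivity. This is where the quadratic lower bound of Lemma~\ref{lem:Rhatint} and the linear bound of Lemma~\ref{annoying} on non-agreeable edges combine. Note that Lemma~\ref{annoying} must be applied to each of the three pairwise intersection graphs separately, because agreeability of an edge depends on which graph it is regarded as lying in.
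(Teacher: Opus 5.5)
Your proposal is correct and is essentially the paper's proof: you get additivity $c_{uw}=c_{uv}+c_{vw}$ from an edge of $\hat R_u\cap\hat R_v\cap\hat R_w$ that is agreeable in all three pairwise intersections, then use the equivalence relation $c_{uv}=0$, which has at most two classes, and set $P=\pm1$ on them. The one difference is cosmetic: you find the common agreeable edge by the global bound of at most $6n$ non-agreeable edges per pairwise intersection, whereas the paper takes an edge in a component of the triple intersection with quadratically many edges; also, your abandoned first sketch (``equivalently, any'') is false as written because $c(vxy)$ itself varies with $xy$, but the final argument does not use it.
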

\begin{proof} 
Consider any three distinct extendable vertices $u,v,w$. By Lemma~\ref{lem:Rhatint}, we have that $\hat{R}_u \cap \hat{R}_v \cap \hat{R}_w$ contains quadratically many edges, and so contains a component with quadratically many edges.
Let $xy$ be an edge of such a component. By Lemma~\ref{annoying} the edge $xy$ is agreeable in each of $\hat{R}_u \cap \hat{R}_v$, $\hat{R}_u \cap \hat{R}_w$ and $\hat{R}_v \cap \hat{R}_w$. 
In particular, by Lemma~\ref{lem:c_uv} we have $c(uxy) - c(vxy) = c_{uv}$, $c(uxy) - c(wxy) = c_{uw}$, and  $c(vxy) - c(wxy) = c_{vw}$. Thus $c_{uw} = c_{uv} + c_{vw}$.

For any extendable vertices $u,v$, we write $u\sim v$ if $c_{uv} = 0$. Since $c_{uw} = c_{uv} + c_{vw}$ for all extendable vertices $u,v,w$, this $\sim$ defines an equivalence relation. Moreover, we know that $c_{uv} \in \{-2,0,+2\}$ for all $u,v$ by Lemma~\ref{lem:c_uv}. Thus if $u \not \sim v$ and $v \not \sim w$, then $c_{uw} = c_{uv} + c_{vw} = \pm2 + \pm 2 \in \{-4,0,4\}$. And so, $c_{uw} = 0$, that is $u \sim w$. In particular, this implies that there are no three vertices all in different equivalence classes, and so there are at most two equivalence classes. 

If there is only one equivalence class then we can take  $P: V(H) \rightarrow \{-1,+1\}$ to be constant. Otherwise, there are two equivalence classes $A$ and $B$. Fix $a\in A$ and $b \in B$. Since $a \not\sim b$, we know that $c_{ab} \ne 0$. 
If $c_{ab} = -2$, set $P$ to be $-1$ on $A$ and $+1$ on $B$. Otherwise if $c_{ab} = +2$, set $P$ to be $+1$ on $A$ and $-1$ on $B$. Set $P$ on $V(H)\setminus (A \cup B)$ to arbitrarily say $+1$.

Finally, we check that $P$ has the required properties. Let $s,t \in V(H)$ be extendable vertices. If $s,t$ are in the same class, then $P(s) - P(t) = 0 = c_{st}$. If $s\in A$, $t \in B$, then \[P(s) - P(t) = c_{ab} = c_{as} + c_{sb} = c_{as} + c_{st} + c_{tb} = 0 + c_{st} + 0. \qedhere \]
\end{proof}

\bigskip

We have now obtained the bipartition and shown that certain colour agreement occurs in a local setting, and so our next goal is to prove that the colours agree in a more global sense. 
The following lemma is a first step towards this goal. In short, it shows that if pairs $xy$ and $x'y'$ are linked by a certain kind of gadget (shown in~\cref{fig:xycycle}) then we obtain corresponding colours on $uxy$ and $ux'y'$, for certain $u$. The gadget that we use is related to the long switchers: if the gadget is present then a whole family of potential long switchers can be found. Since we are assuming that none of these are colour-changing switchers, we can use these to deduce properties of the colouring. 

\begin{lem}\label{lem:colours_are_consistent}
      Given \cref{setup},  
      suppose that $H$ contains no $\zeta$-connectable $7\ell$-bounded switchers. Let $x,y,x',y'$ be distinct extendable vertices in $H$. If there exist distinct extendable vertices $w_0,w_1,w',z,z_1',z_2'$, and a set 
    $$U \subseteq V(H) \setminus \left(\{x,y,x',y',w_0,w_1,w',z,z'_1,z'_2 \}\right)$$ of extendable vertices of size $|U|> (\ell +3)/2$ such that the graph $\bigcap_{u \in U}\hat{R}_u$ contains 
    \begin{enumerate}[(i)]
        \item two vertex disjoint paths each on $\ell-1$ extendable vertices,
         \[P \coloneqq p_1p_2\cdots p_{\ell-1} \qquad \text{and} \qquad P' \coloneqq p'_1p'_2\cdots p'_{\ell-1},\]
        where the vertices are distinct from each other and from $x,y,x',y',w_0,w_1,w',z,z'_1,z'_2 $, and
         \item the paths $w_0w_1xyPx'y'z_1'z_2'$ and $zyxP'y'x'w'$ (see Figure~\ref{fig:xycycle}),
    \end{enumerate}
     then for all $u \in U$, we have
    \[c(uxy) - P(x) - P(y) = c(ux'y') - P(x') - P(y'),\] where $P$ is the function given by Corollary~\ref{cor:c(uxy) - c(vxy)}.
\end{lem}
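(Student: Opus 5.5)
The idea is to use the gadget to build a family of $\zeta$-connectable $7\ell$-bounded potential switchers of the `long' type (Figure~\ref{fig:long_switcher}). The hypothesis says none of them is colour-changing, so $c(\mathcal{S})=c(\mathcal{S}')$ for each. We then expand both sides using Corollary~\ref{cor:c(uxy) - c(vxy)} until only the terms in the statement remain.

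\textbf{Basic operation.} Every pair we use is an edge of $\bigcap_{u\in U}\hat R_u$. Given a graph path $a_1a_2\cdots a_m$ in this intersection, we insert distinct vertices of $U$ at every third position, as in $a_1a_2u_1a_3a_4u_2a_5\cdots$. This gives a tight path in $H$, because each triple has the form $a_ia_{i+1}u$ or $a_iua_{i+1}$ with $a_ia_{i+1}\in R_u\subseteq L_u$. Its end-pairs are edges of $\hat R_u$, hence $\zeta$-connectable. This is why we work in $\hat R$ and require all vertices to be extendable.

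\textbf{Long paths.} We apply the operation to the two walks of (ii).
\begin{itemize}
\item Along $w_0w_1xyPx'y'z_1'z_2'$ the path has $\ell+7$ vertices. We insert $(\ell+3)/2$ vertices of $U$ (this is where $|U|>(\ell+3)/2$ is used), producing a tight path with $\tfrac32(\ell+3)$ edges.
\item The second walk $zyxP'y'x'w'$ has the pairs $xy$ and $x'y'$ in reversed order. Because $\ell$ is odd, the insertion pattern along it is shifted by one position mod $3$ relative to the first.
\end{itemize}
This parity shift is what lets us build two long tight paths, to be used as $P_1$ and $P_1'$, with the same end-pairs and the same vertex set. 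In the one, a fixed $u\in U$ forms the edge $uxy$; in the other, $u$ forms the edge $ux'y'$ instead. In both, the remaining inserted vertices, and the vertices of $P$ and $P'$ that are displaced, are each accounted for exactly once.

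\textbf{Short paths and size.} The vertices left over in one collection but not the other (essentially the $\ell$ vertices of the unused path among $P,P'$ together with the displaced $U$-vertices) are absorbed into $\ell$ short paths $abvcd$ versus $abv'cd$. Here $ab$ and $cd$ are consecutive edges of $P$ or $P'$, so the short paths in $\mathcal{S}$ and $\mathcal{S}'$ differ only in the central vertex. The total number of vertices is at most $(\ell+7)+(\ell-1)+|U\text{-vertices used}|+\ell\le 7\ell$, so the potential switcher is $7\ell$-bounded.

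\textbf{Accounting.} By hypothesis $c(\mathcal{S})=c(\mathcal{S}')$. To evaluate both sides, we pair each edge $vab$ in $\mathcal{S}$ with the edge $v'ab$ in $\mathcal{S}'$ on the same pair $ab$, wherever such a match exists. For each matched pair we replace $c(vab)-c(v'ab)$ by $P(v)-P(v')$, using Corollary~\ref{cor:c(uxy) - c(vxy)}; this requires $ab$ to be agreeable in $\hat R_v\cap\hat R_{v'}$. For that we use the following:
\begin{itemize}
\item $ab$ lies in the quadratically large intersection $\bigcap_{u\in U}\hat R_u$;
\item by Lemma~\ref{annoying}, a non-agreeable component has at most linearly many edges;
\item so we may assume all pairs lie in a single large component (choosing the gadget inside it, or noting that agreeability follows once each pair lies on a $3$-edge path of the large component).
\end{itemize}
After these substitutions the $P(v)$ terms telescope, because each inserted vertex appears equally often in $\mathcal{S}$ and $\mathcal{S}'$. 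The only unmatched edges are $uxy$ versus $ux'y'$, together with edges whose vertex-triple changes between $\{x,y\}$-type and $\{x',y'\}$-type pairs. The latter again reduce, via Corollary~\ref{cor:c(uxy) - c(vxy)}, to $P(x)+P(y)$ and $P(x')+P(y')$. The identity $c(\mathcal{S})=c(\mathcal{S}')$ then becomes exactly $c(uxy)-P(x)-P(y)=c(ux'y')-P(x')-P(y')$.

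\textbf{Main obstacle.} The hard step is the combinatorial design of $P_1,P_1'$ and the short paths. The vertex sets must coincide, the end-pairs must agree, and every edge other than the $u$-edges at $xy$ and $x'y'$ must cancel in the colour sum. I would first work this out explicitly for $\ell=3$, tracking positions mod $3$ along both walks, and then check that the pattern extends periodically for general odd $\ell$.
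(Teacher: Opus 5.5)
\textbf{The central step cannot work as described.} Your proposal rests on a single potential switcher whose only unmatched triples are $uxy$ versus $ux'y'$, and you leave its construction open. In fact no such switcher can be built from the gadget in this way.

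The union of the two walks in (ii) is the cycle $C=x\,y\,p_1\cdots p_{\ell-1}\,x'\,y'\,p'_{\ell-1}\cdots p'_1\,x$ of length $2\ell+2$, with pendant paths at $x,y,x',y'$. It has $2\ell+8$ vertices and $2\ell+8$ edges, so it has cycle rank one. Hence two paths in it with the same initial and final pairs either coincide, or their edge sets have symmetric difference exactly $E(C)$.

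So if your long paths $P_1,P_1'$ come from inserting $U$-vertices into two such walks, then every edge of $P$ and of $P'$ lies in triples on one side only. Short paths that differ only in their central vertex carry the same pairs on both sides, so they cannot compensate. Corollary~\ref{cor:c(uxy) - c(vxy)} only compares $c(vab)$ with $c(v'ab)$ for a \emph{common} pair $ab$. Therefore terms such as $c(u\,p_ip_{i+1})$ survive, and a single identity $c(\mathcal S)=c(\mathcal S')$ never collapses to the target.

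For the same reason, your accounting step is wrong: triples on $\{x,y\}$ and triples on $\{x',y'\}$ are never related by the Corollary. Two further problems:
\begin{itemize}
\item Your description is internally inconsistent. If $P_1,P_1'$ had the same vertex set, no vertices would be left over for the short paths.
\item Your agreeability argument does not apply. The gadget is given, not chosen, and nothing guarantees that $\bigcap_{u\in U}\hat R_u$ is large.
\end{itemize}

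\textbf{What is missing.} You need two families of switchers and a subtraction. The paper uses $S_{\vec u}$, with long paths $(w_0w_1xyPx'w')_{\vec u}$ versus $(w_0w_1xP'y'x'w')_{\vec u}$, and $T_{\vec u}$, with $(zyPx'y'z_1'z_2')_{\vec u}$ versus $(zyxP'y'z_1'z_2')_{\vec u}$.

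The displaced vertices are paired as $(x,x')$ or $(y,y')$, together with $(p_i,p_i')$. Each pair is swapped via short paths $s_1s_2vs_3s_4$ versus $s_1s_2v's_3s_4$, built on a \emph{fresh} $3$-edge path in an agreeable component of $\hat R_v\cap\hat R_{v'}$. This is the only place the Corollary is used, and each such swap contributes $3P(v)-3P(v')$.

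The $P$-segments of $S$ and $T$ have the same insertion parity, and so do the $P'$-segments of $S'$ and $T'$. Hence in $c(S)-c(S')-c(T)+c(T')$ only a few end triples survive. The pair $xy$ lies in two triples of $S$ and one of $T'$, while $x'y'$ lies in two of $T$ and one of $S'$. This produces $3c(uxy)-3c(ux'y')$.

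The $U$-labels along $P$ are shifted by one index between $S$ and $T$. To handle this, the paper pairs $S_{j(\vec u)}$ with $T_{(j+1)(\vec u)}$ and sums over all cyclic rotations of $\vec u$; as a result, no gadget pair ever needs to be shown agreeable. Finally, the strict bound $|U|>(\ell+3)/2$ is used to isolate a single $u\in U$.
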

    \begin{figure}[h]
        \centering
        \includegraphics[scale=1]{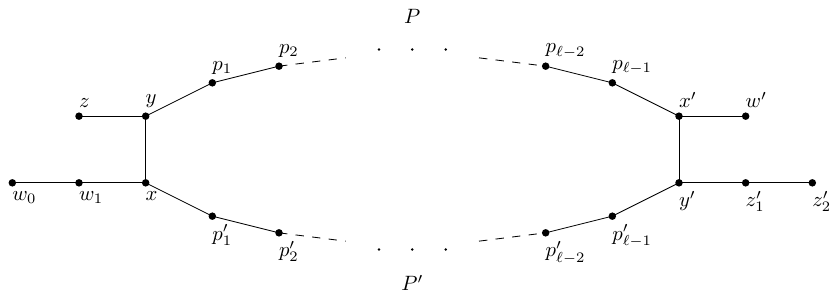}
        \caption{The vertices and edges that must be present in  $\bigcap_{u \in U}\hat{R}_u$ for Lemma~\ref{lem:colours_are_consistent}.}
        \label{fig:xycycle}
    \end{figure}
\begin{proof}
    Let $x,y,x',y'$ be distinct extendable vertices such that there exist vertices $w_0,w_1,w',z,$ $z'_1,z'_2$ and a set $U \subseteq V(H) \setminus \left(\{x,y,x',y',w_0,w_1,w',z,z'_1,z'_2 \}\right)$
        satisfying the conditions of the lemma.
    For each pair of vertices $(v,v') \in \{(x,x'),(y,y')\}\cup\{(p_i,p_i'):1 \le i \le \ell-1\}$ in turn, pick a path $s^{(v)}_1s^{(v)}_2s^{(v)}_3s^{(v)}_4$ on 3 edges in an agreeable component of $\hat{R}_v \cap \hat{R}_{v'}$ disjoint from the vertices $\{w_0, w_1, w', z, z'_1, z'_2\}\cup V(P) \cup V(P')$ and the vertices used in previously constructed paths. Note that these paths exist since 
     $\hat{R}_v \cap \hat{R}_{v'}$ contains quadratically many edges by Lemma~\ref{lem:Rhatint} and the number of non-agreeable edges is at most $6n$ by Lemma~\ref{annoying}.
     
     Now define a pair of tight paths in $H$: $Q_v = s^{(v)}_1s^{(v)}_2vs^{(v)}_3s^{(v)}_4$ and  $Q_{v}'=s^{(v)}_1s^{(v)}_2v's^{(v)}_3s^{(v)}_4$. 
     Note that 
     \begin{equation}
     \label{eq:cQvQv'}
     c(Q_v) - c(Q_{v}') = 3P(v) - 3P(v')
     \end{equation}
     by construction, Corollary~\ref{cor:c(uxy) - c(vxy)} (applicable as the vertices $v,v'$ are extendable).
    
    We will define a family of potential switchers. 
    First we need some notation. Given a positive integer $k$, a tuple $\vec{u} = (u_1,\ldots,u_{k})$ of distinct vertices from $U$, and a path $Q = q_1q_2\ldots q_{2k+2} \in \bigcap_{u \in U} \hat{R}_u$, define $Q_{\vec{u}}$ to be the tight 3-path in $H$ given by
    \[Q_{\vec{u}} = (q_1\cdots q_{2k+2})_{(u_1\ldots u_k)}\coloneqq q_1q_2u_1q_3q_4u_2q_5\cdots   q_{2k}u_{k}q_{2k+1}q_{2k+2}.\]
    See Figure~\ref{fig:Qu} for a diagram.
    \begin{figure}[h]
        \centering
        \includegraphics[scale=1]{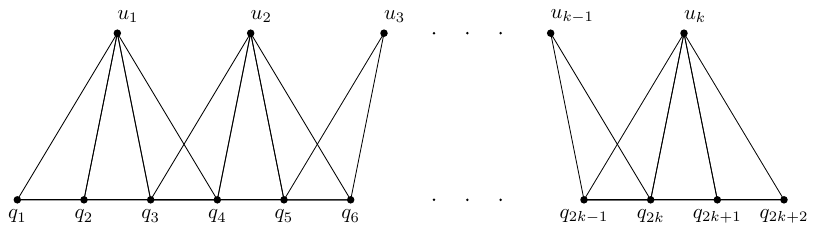}
        \caption{The 3-uniform path $Q_{\vec{u}}$ built from the path $Q$ in $\bigcap_{u \in U} \hat{R}_u$.}
        \label{fig:Qu}
    \end{figure}

    We define two potential switchers for each tuple $\vec{u} = (u_1,\ldots,u_{(\ell+3)/2})$ of $(\ell+3)/2$ distinct vertices in $U$. 
    First consider the following  two sets of paths:
    \[S_{\vec{u}} = (w_0w_1xyPx'w')_{\vec{u}} ~ \cup ~ Q'_{y} ~ \cup \bigcup_{1\le i\le\ell-1} Q_{p_i}',\]
    \[S'_{\vec{u}} = (w_0w_1xP'y'x'w')_{\vec{u}} ~ \cup ~ Q_{y} ~ \cup \bigcup_{1\le i\le\ell-1} Q_{p_i}.\]
    Note that $S_{\vec{u}}$ and $S'_{\vec{u}}$ are both sets of vertex-disjoint tight 3-paths in $H$ by definition, and the paths in $S_{\vec{u}}$ and $S'_{\vec{u}}$ span the same vertex set. Moreover, the initial and final pairs of vertices in the paths are all $\zeta$-connectable and are the same in corresponding paths in $S_{\vec{u}}$ and $S'_{\vec{u}}$. Thus $(S_{\vec{u}},S'_{\vec{u}})$ is a potential switcher. Specifically, it is a potential long switcher, as in~\cref{fig:long_switcher}. Since $H$ contains no $\zeta$-connectable $7\ell$-bounded switchers, we can conclude that the edge colour sums must be equal, that is,
    \begin{equation}
    \label{equation: cSu}
    c( S_{\vec{u}}) = c( S'_{\vec{u}}).
    \end{equation}

    Next consider the following  two sets of paths:    
    \[T_{\vec{u}} = (zyPx'y'z_1'z_2')_{\vec{u}} ~ \cup ~ Q_{x} ~ \cup \bigcup_{1\le i\le\ell-1} Q_{p_i}',\]
    \[T'_{\vec{u}} = (zyxP'y'z'_1z_2')_{\vec{u}} ~ \cup ~ Q'_{x} ~ \cup \bigcup_{1\le i\le\ell-1} Q_{p_i}.\]
    The tight paths in $T_{\vec{u}}$ and $T'_{\vec{u}}$ span the same vertex set and the initial and final pairs of vertices are $\zeta$-connectable and the same in corresponding paths. Thus $(T_{\vec{u}},T_{\vec{u}}')$ is a potential (long) switcher as well. Since $H$ contains no $\zeta$-connectable $7\ell$-bounded switchers, we can conclude that 
        \begin{equation}
    \label{equation: cTu}
    c(T_{\vec{u}}) = c( T'_{\vec{u}}).
    \end{equation}
    Combining \eqref{equation: cSu} with \eqref{equation: cTu} gives
    \[
    c(S_{\vec{u}}) - c(T_{\vec{u}}) - c( S'_{\vec{u}}) + c( T_{\vec{u}}') = 0.
    \]
    This holds for all choices of $\vec{u}$. In particular, it holds if we rotate the entries of $\vec{u}$ on any term. Fix arbitrary distinct vertices $u_1,u_2,\ldots,u_{(\ell+3)/2} \in U$ and let $$j(\vec{u}) := (u_j,u_{j+1},\ldots,u_{(\ell+3)/2},u_1,\ldots,u_{j-1}).$$
    Then 
        \begin{align} 0 &= \sum_{j=1}^{(\ell+3)/2}\left(
    c(S_{j(\vec{u})}) - c(T_{(j+1)(\vec{u})}) - c( S'_{j(\vec{u})}) + c( T'_{j(\vec{u})}) \right)\nonumber\\
    &= \sum_{j=1}^{(\ell+3)/2}\left(
   c(S_{j(\vec{u})}) - c(T_{(j+1)(\vec{u})})\right) + \sum_{j=1}^{(\ell+3)/2}\left( -c( S'_{j(\vec{u})}) +c( T'_{j(\vec{u})}) \right). \label{eq:edge_score_sum}
    \end{align}

    Observe that $ S_{j(\vec{u})}$ and $T_{(j+1)(\vec{u})}$ have many common edges using vertices of $P$. Cancelling these common edges we see that
    \begin{multline*}
    c(S_{j(\vec{u})}) - c(T_{(j+1)(\vec{u})})  \\ = c(w_0w_1u_j) + c(w_1u_jx) + c(u_jxy) + c(xyu_{j+1}) + c(u_{j-1}x'w') + c(Q'_y)
    \\- c(zyu_{j+1}) - c(u_{j-1}x'y') - c(x'y'u_{j}) - c(y'u_{j}z_1') - c(u_jz_1'z_2')  -c(Q_x).
    \end{multline*}
   Similarly,
      \begin{multline*}
    - c( S'_{j(\vec{u})}) +c( T'_{j(\vec{u})})  \\ = -c(w_0w_1u_j) - c(w_1u_jx) - c(y'u_{j-1}x') - c(u_{j-1}x'w') - c(Q_y)
    \\+ c(zyu_{j}) + c(yu_jx) + c(y'u_{j-1}z_1') + c(u_{j-1}z_1'z_2')  + c(Q'_x).
    \end{multline*}
    Substituting these expressions back into \eqref{eq:edge_score_sum} and cancelling (taking into account the sum over $j$), we obtain
      \begin{align} 0 
    &= \sum_{j=1}^{(\ell+3)/2}\left( 3c(u_jxy) - 3c(u_jx'y') + c(Q'_y) - c(Q_y) - c(Q_x) + c(Q'_x)\right) \nonumber \\
    & = \sum_{j=1}^{(\ell+3)/2}\left( 3c(u_jxy) - 3c(u_jx'y') + 3P(y') - 3P(y) - 3P(x) + 3P(x')\right),
    \label{eq:edge_score_simplified}
    \end{align}
   where in the last equality we used \eqref{eq:cQvQv'}. Since \eqref{eq:edge_score_simplified} holds for arbitrary choices of vertices $u_1,\ldots, u_{(\ell+3)/2} \in U$ and $|U| > (\ell+3)/2$, by swapping which choice of elements of $U$ is used we are able to conclude that each term of the summation is equal, which implies that, for all $u \in U$, we must have
    \[ 3c(uxy) - 3c(ux'y') + 3P(y') - 3P(y) - 3P(x) + 3P(x') = 0.\]
    Dividing by three and rearranging gives
    \[ c(uxy) - P(x) - P(y) = c(ux'y') - P(y') - P(x')\]
    as required.
\end{proof}

The following easy corollary of Lemma~\ref{lem:colours_are_consistent} will be more convenient to apply. 
\begin{cor}
\label{cor:colours_are_consistent}
    Given \cref{setup},  
      suppose that $H$ contains no $\zeta$-connectable $7\ell$-bounded switchers. Let $x,y,x',y'$ be distinct extendable vertices in $H$. Suppose that there exists a set $U \subseteq V(H) \setminus \left(\{x,y,x',y' \}\right)$ of extendable vertices of size $|U|> (\ell +3)/2$ such that the graph $\bigcap_{u \in U}\hat{R}_u$ contains $\{xy, x'y'\}$ and two $y$-$x'$ paths $P,Q$ and two $x$-$y'$ paths $P',Q'$ each on $\ell+1$ extendable vertices, such that $P,Q,P',Q'$ are internally vertex disjoint from each other and $x,y,x',y'$ (see  Figure~\ref{fig:xycycle2}). 
   Then for all $u \in U$, we have
    \[c(uxy) - P(x) - P(y) = c(ux'y') - P(x') - P(y'),\] where $P$ is the function given by Corollary~\ref{cor:c(uxy) - c(vxy)}.
\end{cor}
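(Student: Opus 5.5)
The plan is to deduce the statement directly from Lemma~\ref{lem:colours_are_consistent}. The paths $P,P'$ of that lemma will be the interiors of the paths $P,P'$ given here, and the auxiliary vertices $w_0,w_1,w',z,z_1',z_2'$ will be harvested from the two remaining paths $Q,Q'$.

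Write the $y$-$x'$ paths as $y\,p_1\cdots p_{\ell-1}\,x'$ and $y\,q_1\cdots q_{\ell-1}\,x'$, and the $x$-$y'$ paths as $x\,p'_1\cdots p'_{\ell-1}\,y'$ and $x\,q'_1\cdots q'_{\ell-1}\,y'$. Since $\ell\ge 3$, each interior has at least two vertices. Set $\tilde P=p_1\cdots p_{\ell-1}$ and $\tilde P'=p'_1\cdots p'_{\ell-1}$. These are vertex disjoint paths on $\ell-1$ extendable vertices in $\bigcap_{u\in U}\hat R_u$, so condition (i) of Lemma~\ref{lem:colours_are_consistent} holds. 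For the extra vertices I would take
\[
w_1=q'_1,\quad w_0=q'_2,\quad z_1'=q'_{\ell-1},\quad z_2'=q'_{\ell-2},\quad z=q_1,\quad w'=q_{\ell-1}.
\]
Then $w_0w_1x$, $y'z_1'z_2'$, $zy$ and $x'w'$ are all edges of $Q$ or $Q'$. Together with $xy$, $x'y'$ and the paths $P,P'$, this gives both required paths $w_0w_1xy\tilde Px'y'z_1'z_2'$ and $zyx\tilde P'y'x'w'$ inside $\bigcap_{u\in U}\hat R_u$, which is condition (ii). All these vertices are extendable, and $U$ avoids them because every vertex of $U$ lies in none of the paths. (If this is not part of the hypothesis, it is implicit: $u\notin$ its own link graph.)

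The main obstacle is distinctness when $\ell$ is small. If $\ell\ge 5$, the six vertices above are pairwise distinct and distinct from $x,y,x',y'$ and the $p_i,p'_i$, so Lemma~\ref{lem:colours_are_consistent} applies verbatim and gives the conclusion. If $\ell=3$, the interior of $Q'$ has only two vertices, so $w_0=z_1'$ and $w_1=z_2'$. To handle this, I would reread the proof of Lemma~\ref{lem:colours_are_consistent} and check that it only uses distinctness within each switcher separately.

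The vertices $w_0,w_1,w'$ occur only in the switcher $(S_{\vec u},S'_{\vec u})$, and $z,z_1',z_2'$ occur only in $(T_{\vec u},T'_{\vec u})$. Within each switcher our choice is genuinely distinct, since $\{q'_1,q'_2,q_{\ell-1}\}$ and $\{q_1,q'_{\ell-1},q'_{\ell-2}\}$ each consist of distinct vertices. The auxiliary $3$-edge paths $Q_v,Q_v'$ are chosen avoiding all of these vertices anyway, by Lemma~\ref{lem:Rhatint} and Lemma~\ref{annoying}. The subsequent cancellation argument over rotations $j(\vec u)$ only compares edge colour sums and never needs the two switchers to be disjoint from each other. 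So the same computation goes through and yields $c(uxy)-P(x)-P(y)=c(ux'y')-P(x')-P(y')$ for all $u\in U$.
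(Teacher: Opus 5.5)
Your proposal is the paper's proof: the paper applies Lemma~\ref{lem:colours_are_consistent} to $p_1\cdots p_{\ell-1}$ and $p'_1\cdots p'_{\ell-1}$ with exactly your choice $w_0=q'_2$, $w_1=q'_1$, $w'=q_{\ell-1}$, $z=q_1$, $z'_1=q'_{\ell-1}$, $z'_2=q'_{\ell-2}$, and your observation that $U$ avoids these vertices (as $u$ is isolated in $L_u$) makes explicit a point the paper leaves implicit. Your extra care at $\ell=3$ is warranted, since there this choice gives $w_0=z'_1$ and $w_1=z'_2$, violating the lemma's distinctness hypothesis, which the paper's one-line proof passes over; your fix is sound because $w_0,w_1,w'$ enter only $(S_{\vec u},S'_{\vec u})$, $z,z'_1,z'_2$ enter only $(T_{\vec u},T'_{\vec u})$, and the closing cancellation is a numerical identity unaffected by coinciding edges such as $w_0w_1u_j=u_jz'_1z'_2$ (though note that for $\ell<13$ the long switchers have $\tfrac{13}{2}(\ell+1)>7\ell$ vertices, a separate defect in the lemma itself that the paper's proof shares equally and that needs a relaxed boundedness constant or a larger $\ell$).
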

    \begin{figure}[h]
        \centering
        \includegraphics[scale=1]{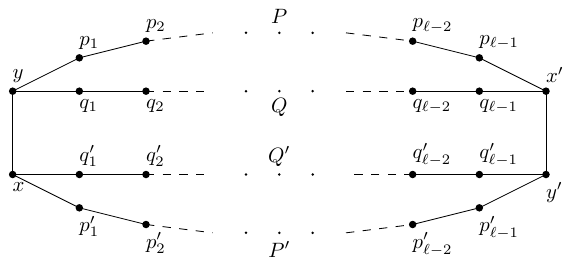}
        \caption{The vertices and edges that must be present in  $\bigcap_{u \in U}\hat{R}_u$ for Corollary~\ref{cor:colours_are_consistent}.}
        \label{fig:xycycle2}
    \end{figure}
\begin{proof}
Let $x,y,x',y'$ be distinct extendable vertices and let $U \subseteq V(H) \setminus \left(\{x,y,x',y' \}\right)$ be a set of extendable vertices of size $|U|> (\ell +3)/2$. Suppose that the graph $\bigcap_{u \in U}\hat{R}_u$ contains the following four 2-paths on $\ell+1$ extendable vertices which are internally vertex disjoint from each other and $x,y,x',y'$:
         \begin{gather*}P \coloneqq yp_1p_2\cdots p_{\ell-1}x' , \qquad P' \coloneqq xp'_1p'_2\cdots p'_{\ell-1}y',  \\
         Q \coloneqq yq_1q_2\cdots q_{\ell-1}x'  \qquad \text{and} \qquad Q' \coloneqq xq'_1q'_2\cdots q'_{\ell-1}y',
         \end{gather*} 
as in Figure~\ref{fig:xycycle2}.

Let
\[
\widetilde P=p_1\cdots p_{\ell-1},
\qquad
\widetilde P'=p'_1\cdots p'_{\ell-1},
\]
and choose
\[
w_0=q'_2,\quad w_1=q'_1,\quad w'=q_{\ell-1},
\quad z=q_1,\quad z'_1=q'_{\ell-1},\quad z'_2=q'_{\ell-2}.
\]
Then $\widetilde P,\widetilde P'$ together with these vertices satisfy the assumptions of Lemma~\ref{lem:colours_are_consistent}. The conclusion therefore follows immediately from the lemma.
\end{proof}

\subsection{From local agreement to global agreement}\label{sec:puttingtogether}

In this section, we will put together all of our results obtained so far to prove the Key Lemma. In particular, we will show that almost all pairs $xy$, $x'y'$ can be linked by chaining together the long gadgets of~\cref{cor:colours_are_consistent} with the local property of~\cref{cor:c(uxy) - c(vxy)}, to get the global colour agreement we desire.

First, we will need another connecting lemma.

\begin{lem}[Another Connecting Lemma]
\label{lem:anotherconnecting}
Given Setup~\ref{setup}, there exists a constant $\mu=\mu(\beta,\ell)>0$ such that 
for every subset of vertices $T\subseteq V(H)$ of size $|T|\leq \mu n$ the following holds.  
Let $x,a\in V(H)$ be distinct vertices and let 
$$
\emptyset \neq W\subseteq \{w\in V: x,a\in V(R_w)\}.
$$
Then there exists $U\subseteq W$ of size $|U|\geq  \mu |W|$ such that there exists an $x$-$a$ path $xr_1\cdots r_{\ell-1}a$ of length $\ell$ in $\cap_{u\in U} R_u$ with $r_i\notin T$ for all $i\in[\ell-1]$.
\end{lem}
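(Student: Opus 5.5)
This is a double counting argument using the robustness of each $R_w$. For each $w\in W$ we have $x,a\in V(R_w)$, and $R_w$ is $(\beta,\ell)$-robust. So there are at least $\beta|V(R_w)|^{\ell-1}\ge \beta (0.85n)^{\ell-1}$ $x$-$a$ paths of length $\ell$ in $R_w$. We then discard those paths whose interior meets $T$. Each interior position can be fixed to a vertex of $T$ in at most $|T| n^{\ell-2}$ ways, so at most $(\ell-1)\mu n^{\ell-1}$ paths are discarded. Choosing $\mu \le \beta (0.85)^{\ell-1}/(2\ell)$ leaves at least $\beta' n^{\ell-1}$ good paths in each $R_w$, where $\beta':=\beta(0.85)^{\ell-1}/2$.

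Next we consider the set $\Pi$ of all candidate interior sequences $(r_1,\dots,r_{\ell-1})$ of distinct vertices outside $T\cup\{x,a\}$. Then $|\Pi|\le n^{\ell-1}$. We form the bipartite incidence structure between $W$ and $\Pi$, joining $w$ to $\pi$ when $x r_1\cdots r_{\ell-1} a$ is a path in $R_w$. By the previous step, the number of incidences is at least $|W|\beta' n^{\ell-1}$. By averaging there is some $\pi\in\Pi$ incident to at least $|W|\beta' n^{\ell-1}/|\Pi|\ge \beta'|W|$ vertices $w\in W$. We let $U$ be this set of $w$. Then the path $x r_1\cdots r_{\ell-1} a$ lies in $R_u$ for every $u\in U$, i.e.\ in $\bigcap_{u\in U}R_u$, and it avoids $T$. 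Hence setting $\mu:=\min\{\beta',\ \beta(0.85)^{\ell-1}/(2\ell)\}$, which depends only on $\beta,\ell$, gives $|U|\ge\mu|W|$, and $U$ is nonempty since $W\neq\emptyset$ and $|U|\ge1$ follows from averaging over a positive count.

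The only point needing care is the lower bound $|V(R_w)|\ge 0.853n$ from Proposition~\ref{prop:robust}(i), which makes the robustness count comparable to $n^{\ell-1}$ rather than $|V(R_w)|^{\ell-1}$. Beyond that, the argument is routine. We must also ensure the paths counted by robustness have distinct interior vertices avoiding $x,a$, which holds since they are paths. There is no real obstacle; the main thing is to make sure $\mu$ is chosen independently of $n$, $T$ and $W$.
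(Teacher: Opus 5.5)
Your proof is correct and is essentially the paper's argument. Both double count incidences between $W$ and interior $(\ell-1)$-tuples, using $(\beta,\ell)$-robustness together with $|V(R_w)|\ge 0.85n$. The only difference is order: you discard the $T$-meeting paths for each $w$ before averaging, whereas the paper first shows that at least $2\mu\ell n^{\ell-1}$ tuples $\vec r$ satisfy $|D(\vec r)|\ge\mu|W|$, and then notes that at most $\mu\ell n^{\ell-1}$ tuples meet $T$.
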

\begin{proof}
Set 
\[
\mu:=  \frac{\beta}{3\ell} \left( \frac{2+\sqrt{2}}{4}\right)^{(\ell-1)}.
\] 
For any $(\ell-1)$-tuple of vertices $\vec{r} = (r_1, \dots, r_{\ell-1}) \in V^{\ell-1}$, define
\[
D(\vec{r}) := \{\, w \in W : x r_1 \cdots r_{\ell-1} a \text{ is a path in } R_{w} \,\}.
\]
By Setup~\ref{setup},  for every $w$, $R_w$ is $(\beta,\ell)$-robust and $V(R_w) \ge \frac{2+\sqrt{2}}{4}n$ by~\cref{prop:robust}. Thus we derive that 
\begin{equation}
\label{sumDr}
\sum_{\vec{r} \in V^{\ell-1}} |D(\vec{r})|
\geq |W|\cdot \beta \left(\frac{(2+\sqrt{2})n}{4}\right)^{\ell-1} = |W| \cdot 3 \mu\ell n^{\ell-1}.
\end{equation}
By averaging, $|D(\vec{r})|$ must be large for many tuples $\vec{r}$. More precisely, the number of tuples $\vec{r}\in V^{\ell-1}$ such that $|D(\vec{r})|\geq \mu|W| $ is at least $2\mu \ell n^{\ell-1}$, as otherwise
\begin{align*}
\sum_{\vec{r} \in V^{\ell-1}} |D(\vec{r})|&= \sum_{\substack{\vec{r} \in V^{\ell-1}\\ |D(\vec{r})|\geq \mu |W|}} |D(\vec{r})| + \sum_{\substack{\vec{r} \in V^{\ell-1}\\ |D(\vec{r})|< \mu|W|}} |D(\vec{r})|
\\ 
&< |W|\cdot 2\mu\ell  n^{\ell-1}+ \mu|W| \cdot n^{\ell-1} \\
&\leq |W|\cdot 3\mu\ell n^{\ell-1}, 
\end{align*}
contradicting \eqref{sumDr}.  For any $T\subseteq V(H)$ of size $|T|\leq \mu n$, the number of $\vec{r}\in V^{\ell-1}$ such that $r_i\in T$ for some $i\in[\ell-1]$ is at most $(\ell-1) |T|n^{\ell-2}\leq \mu\ell n^{\ell-1}$. We conclude that there exists $\vec{r}\in V^{\ell-1}$ such that $|D(\vec{r})|\geq \mu|W|$ and $r_i\notin T$ for every $i\in[\ell-1]$. The set $U:= D(\vec{r})$ satisfies the conclusion of the lemma.
\end{proof}

There are certain edges that are difficult to attach to paths. In particular, we cannot apply~\cref{cor:c(uxy) - c(vxy)} to $uxy$ if $u$ is a non-extendable vertex or if $xy$ is a non-agreeable edge in $\hat{R}_u$. Moreover, we also need to be able to find suitable $v$ such that $uxyv$ is a tight path and $xy$ is agreeable in $\hat{R}_v$. These properties inspire the following definitions.

\begin{defs}
    For $u \in V(H)$, we say a pair of vertices $x, y \in V(H)$ is \emph{$u$-problematic} if for more than $\zeta n$ choices of $v \in V(H)$, we have that $xy$ is a non-agreeable edge in $\hat{R}_{u}\cap \hat{R}_{v}$. 

\label{factprob}
    For a constant $\varepsilon > 0$, an edge $xyz$ in $H$ is said to be \emph{$\eps$-good} if there exists a permutation of $x,y,z$ such that $x,y,z$ are extendable (see~\cref{def:bad}), $xy\in E(R_{z})$ and $xy$ is both $\eps$-connectable 
    and not $z$-problematic.
\end{defs} 

We will show that there are few $u$-problematic pairs for all $u$, and use this to prove that most edges are $\eps$-good for a suitable choice of $\eps$.

\begin{fact}\label{lem:problematic}
Let $u\in V(H)$. The number of $u$-problematic pairs is at most $7n/\zeta$.
\end{fact}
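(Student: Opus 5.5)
This is a double-counting argument combined with Lemma~\ref{annoying}. Fix $u \in V(H)$ and count pairs $(v, xy)$ such that $xy$ is a non-agreeable edge of $\hat{R}_u \cap \hat{R}_v$.

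For each fixed $v$, the graph $\hat{R}_u \cap \hat{R}_v$ lives on the vertex set $V(H)$ of size $n$. Its non-agreeable components are vertex-disjoint, and a component on $n_i$ vertices has at most $6n_i$ edges by Lemma~\ref{annoying}. Since $\sum_i n_i \le n$, the total number of non-agreeable edges in $\hat{R}_u \cap \hat{R}_v$ is at most $6n$. If $u$ or $v$ is non-extendable this bound is still fine: Lemma~\ref{annoying} is purely graph-theoretic, so it does not need Lemma~\ref{lem:Rhatint}. Summing over the at most $n$ choices of $v$, the number of pairs $(v,xy)$ with $xy$ a non-agreeable edge of $\hat{R}_u\cap\hat{R}_v$ is at most $6n \cdot n = 6n^2$.

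On the other hand, each $u$-problematic pair $xy$ contributes more than $\zeta n$ such pairs $(v,xy)$, by definition. So if $N_u$ denotes the number of $u$-problematic pairs, then $N_u \cdot \zeta n < 6n^2$, which gives $N_u < 6n/\zeta \le 7n/\zeta$, as required.

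The only step that needs care is the application of Lemma~\ref{annoying}. The point is that its bound is linear in the number of vertices of each component, so it sums to a total of $6n$ over the disjoint non-agreeable components. The slack between $6$ and $7$ absorbs any rounding or degenerate cases, for instance $v = u$ or small components such as stars and triangles, which Lemma~\ref{annoying} already covers.
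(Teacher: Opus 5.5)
Your proof is correct and is the same double-counting argument the paper uses. You bound the number of triples $(x,y,v)$ with $xy$ non-agreeable in $\hat{R}_u\cap\hat{R}_v$ by $6n^2$ via Lemma~\ref{annoying}, and compare this with the more than $\zeta n$ such triples contributed by each $u$-problematic pair. Your explicit summation of the per-component bound $6n_i$, which the paper leaves implicit, in fact gives the slightly sharper bound $6n/\zeta$.
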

\begin{proof}
The number of triples $(x,y,v)$ such that $xy$ is non-agreeable in $\hat{R}_{u}\cap \hat{R}_{v}$ is at most $n \cdot 6n=6n^2$ by Lemma~\ref{annoying}. If there are more than $7n/\zeta$ many $u$-problematic pairs $x,y$ then there would be more than $\zeta n \cdot 7n/\zeta = 7n^2$ such triples $(x,y,v)$, a contradiction.
\end{proof}

\begin{lem}
\label{lem:goodedges}
Given~\cref{setup}, let $0 < \eps \ll \gamma$. The number of robust edges in $H$ which are not $\eps$-good is at most $\gamma n^3$. 
\end{lem}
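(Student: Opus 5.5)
The plan is a union bound over the ways a robust edge can fail to be $\eps$-good. Fix a robust edge $e$ and a labelling $e=\{x,y,z\}$ with $xy\in E(R_z)$; such a labelling exists because $e$ is robust. It suffices to show that, for all but at most $\gamma n^3$ ordered triples $(x,y,z)$ with $xy \in E(R_z)$, this particular labelling already witnesses $\eps$-goodness. The count of robust edges is at most the count of such triples, so the bound transfers directly.

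There are three failure modes to control.

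\textbf{Non-extendable vertices.} Some vertex among $x,y,z$ may be non-extendable. By Lemma~\ref{lem:badvertices} there are at most $100\zeta n$ non-extendable vertices. Each lies in at most $3n^2$ ordered triples, so this contributes at most $300\zeta n^3$ triples.

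\textbf{Connectability.} The pair $xy$ may fail to be $\eps$-connectable while $xy\in R_z$. By Fact~\ref{fact:connectable} applied with $\rho=\eps$, there are at most $\eps n^3$ such triples.

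\textbf{Problematic pairs.} The pair $xy$ may be $z$-problematic. By Fact~\ref{lem:problematic}, for each $z$ there are at most $7n/\zeta$ $z$-problematic pairs, each giving at most $2$ ordered triples. Summing over $z$ gives at most $14n^2/\zeta$ triples, which is $o(n^3)$ since $n\ge n_0$ and $1/n_0\ll\zeta$.

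Adding these, the number of bad triples is at most $300\zeta n^3+\eps n^3+14n^2/\zeta$. This is at most $\gamma n^3$ provided $\zeta\ll\gamma$, $\eps\ll\gamma$ and $n$ is large, all of which follow from the hierarchy \eqref{eq:hierarchy} and the hypothesis $\eps\ll\gamma$.

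The only point needing care is the dependence between $\zeta$ and $\gamma$. The problematic-pair bound carries a factor $1/\zeta$, and the extendability bound carries a factor $\zeta$. Both are harmless because $\zeta\ll\gamma$ and $1/n_0\ll\zeta$ in Setup~\ref{setup}, so I do not expect a genuine obstacle.
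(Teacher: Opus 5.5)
Your proposal is correct and is essentially the paper's proof: the same union bound over non-extendable vertices (Lemma~\ref{lem:badvertices}), non-$\eps$-connectable pairs with $xy\in E(R_z)$ (Fact~\ref{fact:connectable}), and $z$-problematic pairs (Fact~\ref{lem:problematic}), with $1/n_0 \ll \zeta \ll \gamma$ and $\eps \ll \gamma$ absorbing the error terms. Your bookkeeping over ordered triples, which gives the constants $300\zeta n^3$ and $14n^2/\zeta$, is if anything slightly more careful than the paper's count.
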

\begin{proof}
 By~\cref{fact:connectable}, the number of triples $(x,y,z)\in V(H)^3$ such that $xy\in R_z$ and $xy$ fails to be $\eps$-connectable is at most $\eps n^3$. By~\cref{lem:problematic}, for every $z\in V(H)$ the number of $z$-problematic pairs is at most $7n/\zeta$. 
 By~\cref{lem:badvertices}, the number of non-extendable vertices is at most $100 \zeta n$. 
 Hence the number of robust edges in $H$ which are not $\varepsilon$-good is at most 
\begin{equation*}
\eps n^3+ n \cdot \frac{7n}{\zeta} + 100 \zeta n\cdot n^2 < \gamma  n^3,
\end{equation*}
where we have used that $1/n_0 \ll \zeta \ll \gamma$ from~\eqref{eq:hierarchy}.
\end{proof}

Now we know which undesirable edges from $H$ to discard, we can proceed with proving the key lemma. We restate the lemma below (in its alternative form) for convenience.
\keyalt*

\begin{proof}[Proof]
    Let $P:V(H) \rightarrow \{-1,+1\}$ be as defined in Corollary~\ref{cor:c(uxy) - c(vxy)}.
    Define $g(uxy) := c(uxy) - P(u) - P(x) - P(y)$.   
   By Corollary~\ref{cor:c(uxy) - c(vxy)}, if $xy$ is an agreeable edge in $\hat{R}_u \cap \hat{R}_v$ for some extendable $u,v\in V(H)$ then \begin{equation}
   \label{nonannoying}
   g(uxy) - g(vxy) = c(uxy) - P(u) - c(vxy) + P(v) = 0.
   \end{equation}
  
Let $E_R\subseteq E(H)$ be the set of robust edges in $H$. Further, set $\eps = 1000\mu^{-8}\zeta$, where $\mu$ is taken from~\cref{lem:anotherconnecting}, and note that by \cref{setup} \eqref{eq:hierarchy}, we may ensure $\eps \ll \gamma$. Let $E\subseteq E_R$ be the set of edges which are $\eps$-good. Lemma~\ref{lem:goodedges} implies that
$ |E_R\setminus E|\leq \gamma n^3.$
 
To establish Lemma~\ref{lem:key_lemma_alternative}, it will therefore suffice to show that $g$ is constant on  $E$. In fact, it suffices to show that $g(e)=g(f)$ for vertex-disjoint $e,f \in E$. If $e,f$ intersect then take a third edge $e' \in E$ disjoint from both, which is possible as $|E| = \Theta(n^3)$. If $g(e) = g(e')$ and $g(e') = g(f)$ then we also have $g(e) = g(f)$.

Therefore let $e=vxy$ and $f=v'x'y'$ be disjoint edges in $E$. The `$\varepsilon$-goodness' of $e$ and $f$ are witnessed by the following: $x,x',y,y',v,v'$ are extendable, $xy \in E(R_v)$; $x'y' \in E(R_{v'})$; $xy$ is $\eps$-connectable; $x'y'$ is $\eps$-connectable; $xy$ is not $v$-problematic and $x'y'$ is not $v'$-problematic. Set
$$
W:=\{w\in V: xy \text{ is agreeable in } \hat{R}_w\cap \hat{R}_v\}
$$
and
$$
W':=\{w\in V: x'y' \text{ is agreeable in } \hat{R}_w\cap \hat{R}_{v'}\}.
$$
Since $xy$ is $\eps$-connectable, $x'y'$ is $\eps$-connectable, $e$ is not $v$-problematic and $f$ is not $v'$-problematic, we have that  $|W|\geq (\eps - \zeta)n \ge 999\mu^{-8}\zeta n$ and $|W'|\geq 999\mu^{-8}\zeta n$. Set ${s:=\lfloor 300\mu^{-8}\zeta n \rfloor}$.
By Lemma~\ref{lem:badvertices}, the number of non-extendable vertices is at most $100\zeta n$  and thus we can choose
$w_1, \ldots , w_s \in W$ and $w_1', \ldots , w_s' \in W'$ to be distinct, extendable vertices. Define
\[
I_{a,b} := \left\{ i \in [s] : ab \text{ is agreeable in } \hat{R}_{w_i} \cap \hat{R}_{w_i'} \right\}
\]
for any ordered pair $(a,b)$ of vertices from $V$. By double counting and Lemmas~\ref{lem:robust_intersections} and \ref{annoying}, and since $W$ and $W'$ only contain extendable vertices,
\begin{align}
\label{Iab1}
\nonumber
\sum_{(a,b)\in V^2} |I_{a,b}|
 &\geq \sum_{i=1}^s \left(\left| E(\hat{R}_{w_i}) \cap E(\hat{R}_{w_i'}) \right|-6n \right)
 \geq \sum_{i=1}^s \left(\left| E(R_{w_i}) \cap E(R_{w_i'}) \right|-2\frac{n^2}{100}-6n \right)\\
&\ge  \frac{n^2 s}{100}.
\end{align}
Further, by Lemma~\ref{lem:badvertices}, the trivial bound $|I_{a,b}|\leq s$, and $\zeta \ll \alpha \leq 1$ by \eqref{eq:hierarchy}, it holds that
\begin{align}
\label{Iab2}
\sum_{\substack{(a,b)\in V^2 \\ a \text{ or } b \text{ is non-extendable}}} |I_{a,b}|\leq 100 \zeta n^2 s \leq \frac{n^2s}{10000}.
\end{align}
Thus, by combining \eqref{Iab1} with \eqref{Iab2}, we obtain
\begin{align*}
\sum_{\substack{(a,b)\in V^2 \\ a \text{ and } b \text{ are extendable}}} |I_{a,b}|\geq \sum_{(a,b)\in V^2} |I_{a,b}| \ \ - \sum_{\substack{(a,b)\in V^2 \\ a \text{ or } b \text{ is non-extendable}}} |I_{a,b}| \geq \frac{n^2 s}{100} - \frac{n^2 s}{10000} \geq \frac{n^2 s}{120}.
\end{align*}

Let $(a,b)\in V^2$ such that $a$ and $b$ are extendable, and $|I_{a,b}|\geq s/120\geq 2\mu^{-8}\zeta n$.
We now see that by iteratively applying Lemma~\ref{lem:anotherconnecting} eight times there exists an index set
$I\subseteq I_{a,b}$ of size $|I|\geq \mu^{8} |I_{a,b}|\geq \mu^{8} \cdot 2 \mu^{-8}\zeta n=2\zeta n $ such that
\begin{itemize}
\item $\cap_{i\in I }R_{w_i}$ contains two $y$-$a$ paths $P_1,P_2$ of length $\ell$, 
\item $\cap_{i\in I }R_{w_i}$ contains two $x$-$b$ paths $P_3,P_4$ of length $\ell$, 
\item $\cap_{i\in I }R_{w_i'}$ contains two $y'$-$a$ paths $P_5,P_6$ of length $\ell$, and
\item $\cap_{i\in I }R_{w_i'}$ contains two $x'$-$b$ paths $P_7,P_8$ of length $\ell$. 
\end{itemize}
Further, all of these paths can be chosen so that they are internally
vertex-disjoint and all their internal vertices are extendable. Indeed, let
$B_{\rm ne}$ be the set of non-extendable vertices $v\in V(H)$. By Lemma~\ref{lem:badvertices}, we have
$|B_{\rm ne}|\leq 100\zeta n$.
Since $\zeta\ll \mu$, we may assume that $100\zeta n+10\ell<\mu n$. We construct the paths consecutively. Suppose that, for some
$0\leq i\leq 7$, we have already found paths
$P_1,\ldots,P_i$ with pairwise internally disjoint vertex sets, and
next we want to find the $f$-$g$ path $P_{i+1}$, where
$(f,g)\in \{(y,a),(x,b),(y',a),(x',b)\}$.
Set
\[
T_i :=
B_{\rm ne}
\cup \{x,y,x',y',a,b\}
\cup
\left(\bigcup_{j=1}^i V(P_j)\right).
\]
Then $|T_i|\leq 100\zeta n+6+8\ell<\mu n$. Applying Lemma~\ref{lem:anotherconnecting} with $T=T_i$ gives the
desired path $P_{i+1}$ whose internal vertices avoid $T_i$. Hence
$P_{i+1}$ is internally disjoint from the previously chosen paths and
all of its internal vertices are extendable.

Note that every pair of vertices appearing in the union of these 8 paths is $(|I|/n)$-connectable and thus in particular $\zeta $-connectable since $|I|\geq 2\zeta n$. Therefore  $P_1,P_2,P_3,P_4\in \cap_{i\in I }\hat{R}_{w_i}$, and $P_5,P_6,P_7,P_8\in \cap_{i\in I }\hat{R}_{w_i'}$.

Now, fix some $i\in I$ and let $u=w_i$ and $u'=w_i'$. Since $ab$ is agreeable in $\hat{R}_u\cap \hat{R}_{u'}$ and $u$ and $u'$ are extendable, by \eqref{nonannoying}, 
\begin{equation}
\label{g1}
g(uab)=g(u'ab).
\end{equation} Recall that $x, y, x', y'$ are all extendable and we chose $(a,b) \in V^2$ such that $a$ and $b$ are extendable. We can then apply Corollary~\ref{cor:colours_are_consistent} for the vertices $x,y,a,b$ and the paths $P_1,P_2,P_3,P_4$ to obtain 
\begin{equation}
\label{xyab}
c(uxy)-P(x)-P(y)=c(uab)-P(a)-P(b). 
\end{equation}
Similarly, by applying Corollary~\ref{cor:colours_are_consistent} for the vertices $x',y',a,b$ and the paths $P_5,P_6,P_7,P_8$ we obtain 
\begin{equation}
\label{x'y'ab}
c(u'x'y')-P(x')-P(y')=c(u'ab)-P(a)-P(b).
\end{equation}
Using the definition of $g$, \eqref{xyab} and \eqref{x'y'ab} together imply 
\begin{equation}
\label{g2}
g(uxy)=g(uab) \quad \text{and} \quad g(u'x'y')=g(u'ab).
\end{equation}
Further, since $xy$ is agreeable in $\hat{R}_u\cap \hat{R}_v$ as $u \in W$, and $x'y'$ is agreeable in $\hat{R}_{u'}\cap \hat{R}_{v'}$ as $u' \in W'$, by \eqref{nonannoying}, 
\begin{equation}
\label{g3}
g(vxy)=g(uxy) \quad \text{and} \quad g(u'x'y')=g(v'x'y').
\end{equation}
Combining \eqref{g1} and \eqref{g2} with \eqref{g3}, we conclude    \[g(vxy) = g(uxy) = g(uab) = g(u'ab) = g(u'x'y') = g(v'x'y'),\]
completing the proof of Lemma~\ref{lem:key_lemma_alternative}.    
\end{proof}

\section{Proof of Theorem~\ref{thm:main}}
\label{sec:fews}

We are now ready to complete the proof of the main theorem, restated below for convenience. 
\main*

\begin{proof}[Proof]
    Let $H$ be a 2-coloured 3-graph on $n \geq n_0$ vertices with $\delta_1(H) \geq (\frac{3}{4} + \alpha)\binom{n}{2}$. Our goal is to show that $H$ contains a tight Hamilton cycle with at least $(\frac{1}{2} + \delta)n$ edges of the same colour. We will use the robust graphs and constant hierarchy given by \cref{prop:robust} and \cref{setup}.

    First, apply Lemma~\ref{lem:lots_of_switchers} to obtain $\delta$ such that whenever $H$ contains $4\delta n$ disjoint $7\ell$-bounded $\zeta$-connectable switchers, then $H$ contains a tight Hamilton cycle with at least $(\frac{1}{2} + \delta)n$ edges of the same colour. So we may assume that $H$ contains fewer than $4\delta n$ disjoint $7\ell$-bounded $\zeta$-connectable switchers.

   Let $U$ be the vertex set of a largest set of disjoint $7\ell$-bounded $\zeta$-connectable switchers in $H$. Then, as $\ell\delta \ll \alpha$, we have $|U|\leq 28\ell\delta n \leq \frac{\alpha n}{100}$. Set $V'=V(H)\setminus U$ and $H'=H[V']$ and note that $\delta_1(H')\geq (3/4 + \frac{\alpha}{2})\binom{n'}{2}$. 

We wish to define  $\zeta$-connectable in $H'$ in a way that is harmonious with the definition of $\zeta$-connectable in $H$. To that end, we will show that $R_v[V']$  is a robust graph for all $v \in V'$. Note that for all $x,y \in R_v$ the number of $x,y$ paths of length $\ell$ in $R_v$ containing a vertex in $U$ is at most $|U||V(R_v)|^{\ell-2}$. Thus, since $R_v$ is $(\beta,\ell)$-robust, for all $x,y \in R_v[V']$, the number of $x,y$ paths of length $\ell$ in $R_v[V']$ is at least
\[\beta |V(R_v)|^{\ell-1} - |U||V(R_v)|^{\ell-2} \ge \left(\beta  - \frac{28\delta\ell n}{|V(R_v)|}\right)|V(R_v)|^{\ell-1} \ge \frac{\beta}{2}|V(R'_v)|^{\ell -1},\]
using $\ell\delta \ll \beta$ and $|V(R_v)| \ge \frac{2+\sqrt{2}}{4}n$.
In particular, $R'_v$ is $(\beta/2, \ell)$-robust. Therefore, using $R'_v \coloneqq R_v[V']$ as the robust graphs, we can define $\rho$-connectable in $H'$ in the same way as in~\cref{def:connectable}. According to this definition, we therefore have that  any pair $xy$ that is $2\zeta$-connectable in $H'$ was also $\zeta$-connectable in $H$ (as $|V'| \ge |V|/2$). In particular, any $7\ell$-bounded $2\zeta$-connectable switchers in $H'$ are also $7\ell$-bounded $\zeta$-connectable switchers in $H$. Since every such switcher contained a vertex in $U$, we can conclude that $H'$ contains no $7\ell$-bounded $2\zeta$-connectable switchers.

 
   Replacing $n$, $\alpha$, $\beta$, and $\zeta$ by  $n' \coloneqq|H'|$, $\alpha' \coloneqq \alpha/2$, $\beta' \coloneqq \beta/2$, and $\zeta' \coloneqq 2\zeta$ respectively in Setup~\ref{setup}, we can therefore apply~\cref{lem:key} to $H'$. Let $X\cup Y$ be the partition of $V'$ thus obtained.

   For each of the three cases in Lemma~\ref{lem:key}, let $E_{\textrm{bad}}$ be the set of robust edges to which the case does not apply, so $|E_{\textrm{bad}}| \le \gamma n^3$, and let $W\subseteq V'$ be the set of vertices that are incident to at least $\sqrt{\gamma}n^2$ edges of $E_{\textrm{bad}}$. Note that $|W|\leq \frac{3|E_{\textrm{bad}}|}{\sqrt{\gamma}n^2} \le 3\sqrt{\gamma}n$. Let $E^* \subseteq E_{\textrm{bad}}$ be the set of edges in $E_{\textrm{bad}}$ that do not contain a vertex in $W$.  

    Let $R\subseteq H$ be the sub $3$-graph consisting only of robust edges.
    Then, by Proposition~\ref{prop:robust}, it holds that
    \begin{align*}
    \delta_1(R)&\geq \min_{v\in V}e(R_v)\geq \left(\frac{3}{4}+\frac{\alpha}{2}\right)\frac{n^2}{2}-\frac{(n-|V(R_v)|)^2}{2} \\
    &\geq \frac{3n^2}{8}-\frac{(0.15n)^2}{2}\geq 0.36n^2.
    \end{align*}
  
    Now, let $G\subseteq R$ be the sub $3$-graph on $V$ with edge set $E(G)=E(R)\setminus E^*$. 
     By definition, each vertex is adjacent to at most $\sqrt{\gamma}n^2$ edges in $E^*$, and so 
        \begin{equation*}
 \delta_1(G)\geq \delta_1(R)- \sqrt{\gamma}n^2 \geq 0.36n^2-\sqrt{\gamma}n^2\geq 0.35n^2.
    \end{equation*} 
    Thus, by Theorem~\ref{thm:hamcyclenocolour}, $G$ contains a Hamilton cycle $C$, which is also a Hamilton cycle in $H$. Let  $E_1\cup E_2$ be a partition of $E(C)$, where $E_1$ is the set of edges incident to at least one vertex from $W\cup U$. Note that 
\begin{equation}\label{eq:e1}
    |E_1|\leq 3(|W|+|U|)\leq 9\sqrt{\gamma}n+84\ell\delta  n\leq \frac{n}{100}.
\end{equation}
Moreover, every edge in $E_2$ is a robust edge in $H'$ (as it is robust in $H$ and does not contain a vertex in $U$) and not in $E_{\textrm{bad}}$ (as it neither contains a vertex from $W$ nor is in $E^*$).
   We now separately analyse the three different cases given by the application of Lemma~\ref{lem:key} in $H'$. Given a vertex $v\in V'$, write $X_v:=V(R_v)\cap X$ and $Y_v:=V(R_v)\cap Y$. \\
   
   \noindent \textbf{Cases A and C:} We will give the details for Case A, and note that Case C will follow analogously by swapping the roles of $X$ and $Y$, and red with blue. It suffices to show that $|Y|\le 0.15n$. Indeed, if $|Y|\le 0.15n$ then the number of blue edges in $E_2$ is at most $3|Y| \le 0.45n$, since every blue edge in $E_2$ is incident to a vertex from $Y$ and every vertex from $Y$ is in at most three edges from $C$. Using \eqref{eq:e1}, the total number of blue edges in $C$ is at most $|E_1|+3|Y|\leq0.46n$. Thus, $C$ is (significantly) colour-biased.
      
    Suppose for a contradiction that $|Y|> 0.15n$. Recall that in this case any robust edge entirely in $Y$ must be in $E_{\textrm{bad}}$. Since $|E_{\textrm{bad}}|\leq \gamma n^3$ and $|Y|>0.15n$, there exists $y\in Y$ such that $y$ is incident to at most $100\gamma n^2$ robust edges from $E_{\textrm{bad}}$: if not, there would be at least $100\gamma n^2|Y|/3 \ge 5\gamma n^3$ edges in $E_{\textrm{bad}}$.  Using Proposition~\ref{prop:robust},
    \begin{equation}
    \label{ineq:lowerRya}
    e(R_y[V'])\geq e(R_y)- n|U|\geq\left(\frac{3}{4}+\frac{\alpha}{2}\right)\frac{n^2}{2}-\frac{(n-|X_y|-|Y_y|)^2}{2}-n|U|.
    \end{equation}
    On the other hand, as all robust edges not in $E_{\textrm{bad}}$ intersect $Y$ on 0 or 1 vertices,
        \begin{equation*}
    e(R_y[V'])\leq \frac{|X_y|^2}{2}+100\gamma n^2.
    \end{equation*}
    Combining the previous two equations and recalling $|U| \le \frac{\alpha}{100}n$ gives
            \begin{equation*}
 \left(\frac{3}{4}+\frac{\alpha}{2}\right)\frac{n^2}{2}-\frac{(n-|X_y|-|Y_y|)^2}{2}-\frac{\alpha}{100}n^2 \leq \frac{|X_y|^2}{2}+100\gamma n^2.
    \end{equation*} 
Since $\gamma\ll \alpha$, and $|X_y|+|Y_y| = |V(R_y[V'])|\geq 0.85n$ by Proposition~\ref{prop:robust}, it holds that
            \begin{equation*}
 \left(\frac{3}{4}+\frac{\alpha}{4}\right)\frac{n^2}{2}-\frac{(0.15n)^2}{2}\leq \frac{|X_y|^2}{2}.
    \end{equation*} 
    This implies $|X|\geq |X_y|\geq 0.85n $ and thus $|Y|\leq 0.15n$, a contradiction. \\
    
    \noindent \textbf{Case B:} 
    We assume that $|V'|/2\leq |Y|$ and thus $|Y|\geq 0.49n$; the other case follows analogously. The number of robust edges in $H'[X]$ and $H'[Y]$ in total is at most $|E_{\textrm{bad}}| \le \gamma n^3$. Thus, there exists $y\in Y$ such that $y$ is incident to at most $7\gamma n^2$ robust edges from $H'[Y]$. As above, we have \eqref{ineq:lowerRya}.
    On the other hand,
        \begin{equation}
        \label{ineq:upperRy}
    e(R_y[V'])\leq |X_y||Y_y|+\frac{|X_y|^2}{2}+7\gamma n^2.
    \end{equation}
    Setting $a:=|X_y|/n$, $b:=|Y_y|/n$ and combining \eqref{ineq:lowerRya} with \eqref{ineq:upperRy} and that $|U| \leq \frac{\alpha}{100}n$ we obtain
    \begin{equation*}
    \left(\frac{3}{4}+\frac{\alpha}{2} \right)\frac{1}{2}-\frac{(1 -a-b)^2}{2}-\frac{\alpha}{100} \leq ab+\frac{a^2}{2}+7\gamma .
    \end{equation*}
    Again using that $\gamma \ll \alpha$, we see that 
    \begin{equation}
    \label{ineq:lowerupperRy2}
  \frac{3}{8}+\frac{\alpha}{8}\leq ab+\frac{a^2}{2}+\frac{(1 -a-b)^2}{2}.
    \end{equation}
    Finally, let us prove that inequality \eqref{ineq:lowerupperRy2} cannot hold. Setting $c:=a+b$, we have $c = (|X_y|+|Y_y|)/n$ and so $1 \ge c \geq 0.85$ by Proposition~\ref{prop:robust}. Using this and $a\leq |X|/n\leq 1/2$, we obtain
        \begin{equation*}
    ab+\frac{a^2}{2}+\frac{(1 -a-b)^2}{2}\leq \frac{c^2}{4}+\frac{1}{8}+\frac{(1-c)^2}{2} = \frac{3}{8} - \frac{(3c-1)(1-c)}{4}\leq \frac{3}{8}.
    \end{equation*}
    This contradicts inequality \eqref{ineq:lowerupperRy2}.\end{proof}

\section{Concluding remarks}\label{sec:concl}

In this paper, we showed that the best possible minimum vertex degree conditions forcing colour-biased perfect matchings and colour-biased tight Hamilton cycles in two-coloured 3-graphs align asymptotically. It is natural to consider whether such a phenomenon persists to higher uniformities. As it turns out, for all $k\geq 4$ the (asymptotic) best possible minimum vertex degree condition forcing a colour-biased tight Hamilton cycle in two-coloured $k$-graphs is significantly higher than that of the (asymptotic) best possible minimum vertex degree condition forcing a colour-biased perfect matching.

To see this, first consider the following construction. 
Let $k \geq 3$ and $V = V_1 \sqcup V_2$, where $|V| = n$ is divisible by $2k$ and $|V_1| = (1 - \frac{1}{2k})n$ and $|V_2| = \frac{n}{2k}$.
Let $H$ be the $k$-graph on $V$ whose edge set consists of all possible edges except those with precisely $k-2$ vertices in $V_1$. 
Observe that any tight Hamilton cycle in $H$ must use only edges with at least $k-1$ vertices in $V_1$.
Indeed, there are no tight paths in $H$ containing both edges with at least $k-1$ vertices in $V_1$ and edges with at most $k-3$ vertices in $V_1$. 
Moreover, the large difference between $|V_1|$ and $|V_2|$ clearly ensures the vertices of $V_1$ cannot be covered by any tight path containing an edge from $V_2$. 
Now colour the edges with precisely $k$ vertices in $V_1$ with red, the edges with precisely $k-1$ vertices in $V_1$ with blue, and arbitrarily colour all other edges with red or blue. 
Since $k$ divides $n$, every tight Hamilton cycle $C$ in $H$ is made up of $k$ perfect matchings.
Together with the earlier observation that any tight Hamilton cycle in $H$ must use only edges with at least $k-1$ vertices in $V_1$, each of the perfect matchings making up $C$ must contain $n/2k$ red edges and $n/2k$ blue edges.
Thus, $C$ is colour balanced, i.e., it contains the same number of red and blue edges.
One can calculate that $$\delta_1(H) \geq \left(1 - \frac{(k-1)\left(1 - \frac{1}{2k}\right)^{k-2}}{2k} - o(1)\right)\binom{n}{k-1}.$$ 
Note that the above example was given only for $k=4$ in \cite{color-bias25}. Set $d_k := 1 - \frac{(k-1)\left(1 - \frac{1}{2k}\right)^{k-2}}{2k}$. 
Now, $d_3 \approx 0.72 < 3/4$, which accords with how the example witnessing that Theorem~\ref{thm:main} is (asymptotically) best possible is different to the above example.
However, one can calculate that $d_4 \geq 0.7$ and $d_k \geq 2/3$ for all $k \geq 5$, where the latter follows from $(1 - \frac{1}{2k})^{k-2} \leq(1 + \frac{k-2}{2k})^{-1}$.
Inspection of the values in \cite[Table 1]{color-bias25} together with \cite[Theorem~1.3]{color-bias25} reveal that, for all $k\geq 4$, 
the above example demonstrates the difference between the best possible minimum degree conditions forcing a colour-biased Hamilton cycle and colour-biased perfect matching.

Let $h_1(k,n)$ be the smallest integer $h$ such that every $n$-vertex $k$-graph $H$ with $\delta_1(H) \geq h$ contains a tight Hamilton cycle. Set $$z_k := 1 - \binom{k-1}{\lfloor (k-1)/2\rfloor}\frac{\lceil (k-1)/2 \rceil^{\lceil (k-1)/2 \rceil} (\lfloor (k-1)/2 \rfloor + 1)^{\lfloor (k-1)/2 \rfloor}}{k^{k-1}}.$$  Han and Zhao~\cite{hz16} showed the following.

\begin{thm}[Han and Zhao {\cite[Theorem~1.4]{hz16}}]\label{thm:hz}
    For all $k \geq 3$, we have that $h_1(k,n) \geq \left(z_k + o(1) \right)\binom{n}{k-1}$.
\end{thm}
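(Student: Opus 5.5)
The plan is to exhibit, for every $k\ge 3$ and every large $n$, an $n$-vertex $k$-graph $H$ with $\delta_1(H)\ge (z_k-o(1))\binom{n}{k-1}$ and no tight Hamilton cycle. It is a space/parity-type barrier, in the spirit of the example given above for colour-biased cycles. Set $j:=\lceil (k-1)/2\rceil$, so that $k-j=\lfloor (k-1)/2\rfloor+1$. Partition $V=A\sqcup B$ with $|A|=\lfloor jn/k\rfloor$. Let $H$ consist of all $k$-sets $e$ with $|e\cap A|\neq j$.

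\emph{No tight Hamilton cycle.} Let $C=v_1\cdots v_n$ be a tight Hamilton cycle. Consecutive edges $e_i=\{v_i,\dots,v_{i+k-1}\}$ and $e_{i+1}$ differ by exchanging one vertex. Hence $|e_i\cap A|$ changes by at most $1$ as we move around $C$. Since this value never equals $j$, either every edge of $C$ has at most $j-1$ vertices in $A$, or every edge has at least $j+1$. Each vertex lies in exactly $k$ edges of $C$, so
\[
\sum_{i=1}^n |e_i\cap A| = k|A| .
\]
With $|A|=\lfloor jn/k\rfloor$ we have $(j-1)n< k|A|\le jn<(j+1)n$ for $n$ large. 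This contradicts both alternatives, so $C$ cannot exist. For $n$ divisible by $k$ the average $|e_i\cap A|$ is exactly $j$; the floor only matters for the boundary cases.

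\emph{Degree computation.} Write $\alpha:=|A|/n\to j/k$. A vertex $b\in B$ misses exactly the $k$-sets containing $b$ whose other $k-1$ vertices include exactly $j$ from $A$. Their number is
\[
\binom{|A|}{j}\binom{|B|-1}{k-1-j}=\bigl(1+o(1)\bigr)\binom{k-1}{j}\alpha^{j}(1-\alpha)^{k-1-j}\binom{n}{k-1}.
\]
Substituting $\alpha=j/k$ and $k-1-j=\lfloor (k-1)/2\rfloor$ gives the missing fraction
\[
\binom{k-1}{\lfloor (k-1)/2\rfloor}\frac{j^{j}(k-j)^{\lfloor (k-1)/2\rfloor}}{k^{k-1}},
\]
which is exactly $1-z_k$. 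A vertex $a\in A$ misses the sets with exactly $j-1$ further vertices from $A$, a fraction $\binom{k-1}{j-1}\alpha^{j-1}(1-\alpha)^{k-j}$ of $\binom{n}{k-1}$. The ratio of this to the $B$-deficiency is
\[
\frac{j}{k-j}\cdot\frac{1-\alpha}{\alpha}=1 \quad\text{at }\alpha=j/k,
\]
so both deficiencies coincide asymptotically. Indeed $\alpha=j/k$ is precisely the choice that balances, and hence minimises, the larger of the two. Therefore $\delta_1(H)\ge (z_k-o(1))\binom{n}{k-1}$, which gives $h_1(k,n)>\delta_1(H)$ and hence the stated bound.

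The only delicate step is the integrality in the Hamiltonicity argument. One has to make sure that rounding $|A|$ keeps $k|A|$ strictly between $(j-1)n$ and $(j+1)n$; the choice $\lfloor jn/k\rfloor$ does this. The rounding then only perturbs the degree by $o(n^{k-1})$, which the $o(1)$ term absorbs.
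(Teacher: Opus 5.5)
Your proof is correct. The paper gives no proof of this statement to compare against: it imports the result from Han and Zhao as a black box, only to motivate Conjecture~\ref{conj:new}. Your argument is therefore a complete, self-contained proof, using the natural barrier that deletes every edge meeting $A$ in exactly $j$ vertices.

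Both halves check out.
\begin{itemize}
\item \emph{No tight Hamilton cycle.} Along a tight cycle, $|e_i\cap A|$ moves by at most one per step, so it stays on one side of the forbidden value $j$. On the other hand, $\sum_i|e_i\cap A|=k|A|$ with $|A|=\lfloor jn/k\rfloor$ forces the average strictly between $j-1$ and $j+1$ once $n>k$.
\item \emph{Degree.} The two deficiencies $\binom{k-1}{j}\alpha^{j}(1-\alpha)^{k-1-j}$ and $\binom{k-1}{j-1}\alpha^{j-1}(1-\alpha)^{k-j}$ coincide at $\alpha=j/k$. Using $\binom{k-1}{\lceil (k-1)/2\rceil}=\binom{k-1}{\lfloor (k-1)/2\rfloor}$, their common value is exactly $1-z_k$.
\end{itemize}

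The mechanism is the same one the paper uses informally for its $d_k$ example: along a tight path the intersection size with a part cannot jump over a missing value. For $k=3$ your construction ($|A|\approx n/3$, no edges meeting $A$ in exactly one vertex) shows that the constant $5/9$ in Theorem~\ref{thm:hamcyclenocolour} cannot be lowered, which is a useful sanity check.

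Your aside that $\alpha=j/k$ minimises the larger deficiency is not needed anywhere. Only the value at $\alpha=j/k$ enters the bound.
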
 

Note that $z_k > d_k$ for $k \geq 7$, but that $z_k < d_k$ for $k = 4,5,6$. In light of our earlier example and Theorem~\ref{thm:hz}, we make the following conjecture, which was originally stated only for $k=4$ in \cite{color-bias25}.

\begin{conj}\label{conj:new}
    For all $\alpha > 0$ and $k \geq 4$ there exist $\delta, n_0 > 0$ such that the following holds. Let $H$ be a red/blue coloured $k$-graph on $n \geq n_0$ vertices with $\delta_1(H) \geq (\max\{d_k, z_k\} + \alpha)\binom{n}{k-1}$. Then $H$ contains a tight Hamilton cycle with at least $(\frac{1}{2} + \delta)n$ edges of the same colour.  
\end{conj}

Let $H$ be a $k$-graph on $n$ vertices with $n$ divisible by $k-1$. 
Define a \emph{loose Hamilton cycle in $H$} to be a cyclic ordering of the vertices $v_1, \ldots, v_n$ of $H$ such that $v_i\cdots v_{i+k-1}$ is an edge for all $i \equiv 1 \mod (k-1)$.
Bu\ss, H\`{a}n and Schacht~\cite{bhs2013} proved that for a 3-graph $H$ the asymptotically best possible minimum vertex degree condition forcing a loose Hamilton cycle in $H$ is $\delta_1(H) \geq (\frac{7}{16} + o(1))\binom{n}{2}$.
To see that this result is tight, consider the following example based on \cite[Fact~4]{bhs2013}. For $n \in \mathbb{N}$ divisible by 8,
let $V := V_1 \sqcup V_2 \sqcup V_3$ be a partition of a set of $n$ vertices where $|V_1| = |V_2| = n/8$ and $|V_3| = 3n/4$.
Define $H$ to be the 3-graph whose edge set consists of all edges that intersect $V_1 \cup V_2$ in at least one vertex.
Colour all edges containing one vertex of $V_1$ and two vertices of $V_3$ using red and all edges containing one vertex of $V_2$ and two vertices of $V_3$ using blue. 
Colour all other edges arbitrarily.
Observe that for any loose Hamilton cycle $C$ in $H$, every vertex in $V_1 \cup V_2$ is in at most 2 edges, 
and the number of edges in a loose Hamilton cycle is $n/2$. Hence
every vertex in $V_1 \cup V_2$ must be contained in two edges of $C$ as $|V_1 \cup V_2| = n/4$.
Every vertex in $V_1$ thus contributes $2$ red edges to $C$ and every vertex in $V_2$ contributes $2$ blue edges to $C$ and therefore $C$ is colour balanced. 
One can calculate that $\delta_1(H) = \frac{7}{16}\binom{n}{2} - \frac{n}{32}$. 
We believe the minimum vertex degree condition in the result of Bu\ss, H\`{a}n and Schacht~\cite{bhs2013} should also force a colour-biased loose Hamilton cycle.

\begin{conj}\label{conj:final}
    For all $\alpha > 0$ there exist $\delta, n_0 > 0$ such that the following holds. 
    Let $H$ be a red/blue coloured $3$-graph on $n \geq n_0$ vertices with $\delta_1(H) \geq (\frac{7}{16} + \alpha)\binom{n}{2}$. 
    Then $H$ contains a loose Hamilton cycle with at least $(\frac{1}{4} + \delta)n$ edges of the same colour.
\end{conj}

\section{Acknowledgements}
In the first version of this paper, we had a version of Conjecture~\ref{conj:new} that suggested that the degree threshold was $\delta_1(H)\geq (d_k+\alpha)\binom{n}{k-1}$. Nicolás Sanhueza-Matamala, Richard Lang,  Matías  Pavez-Signé and Camila Zárate-Guerén drew our attention to the paper of 
Han and Zhao~\cite{hz16} showing that this original version of Conjecture~\ref{conj:new} was false. In the meantime, Deng, Tang and Yang~\cite{DengTangYang2026} released a paper also disproving that original conjecture in the case $k=17$ with a different construction.\\

While working on this project, Natalie Behague was initially supported by the European Research Council (ERC) under the European Union Horizon 2020 research and innovation programme (grant agreement No. 947978). 
Felix Clemen's research is supported by a PIMS Postdoctoral Fellowship (PIMS-20260730-PDF).
Natasha Morrison's research is supported by NSERC Discovery Grant RGPIN-2021-02511. Joseph Hyde was supported by UK Research and Innovation Future Leaders Fellowship MR/W007320/2 and is supported by a Leverhulme Early Career Fellowship.

\bibliographystyle{plain} 
\bibliography{refs}

\appendix{}

\section{Proof of Proposition~\ref{prop:robust}}\label{app:robust}

 Recall the statement of Proposition~\ref{prop:robust}.
\robust*

In fact, we can prove the following stronger proposition, which requires a weaker minimum degree condition on $H$. We include this more general version as it may be useful for future applications. 
\begin{prop}[cf.~{\cite[Prop.~2.3]{3hamcycle}}]
\label{prop:robust_alt} Let $ \frac{1}{2}<d\le1$ and $0 < \mu < d-\frac{1}{2}$. For every 3-graph $H$ on sufficiently many vertices $n$ with $\delta_1(H) \geq d\cdot\frac{n^2}{2}$, there exist $\beta>0$ and an odd integer $\ell\geq 3$ such that the following holds. For every $v \in V(H)$, there exists an induced subgraph $R_v\subseteq L_v$ satisfying 
\begin{itemize}
\item[(i)] $|V(R_v)|> \left(\frac{1}{2}+\sqrt{\frac{d-\frac{1}{2}-\mu}{2}}\right)n$,
\item[(ii)] $e_L(V(R_v),V\setminus V(R_v))
<\mu \frac{n^2}{2}$ and $e(R_v)> \left(d - \mu\right)\frac{n^2}{2}-\frac{(n-|V(R_v)|)^2}{2}$,
\item[(iii)] and $R_v$ is $(\beta,\ell)$-robust. 
\end{itemize}
\end{prop}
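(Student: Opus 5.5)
We follow the strategy of \cite[Prop.~2.3]{3hamcycle}: first find, inside each link graph $L_v$, one dense, well-connected piece, and then upgrade it to a robust subgraph via a regularity-type cleaning. The constants must be uniform in $v$. We therefore fix in advance a finite hierarchy of candidate parameters, depending only on $d$ and $\mu$, and at the end take $\beta$ as the minimum and $\ell$ as a common odd value over all outcomes.

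\emph{Step 1 (a dense, well-separated vertex set).} Fix $v$ and write $L=L_v$, so $e(L)\ge d n^2/2$. Apply Szemer\'edi's regularity lemma to $L$ with $\varepsilon\ll\mu$ and pass to the reduced graph $\Gamma$. Then $\Gamma$ has edge density at least $d-\mu/4$, since we discard irregular pairs, sparse pairs and the exceptional set. The components of $\Gamma$ with cluster proportions $c_1,c_2,\dots$ satisfy $\sum_i c_i^2\ge d-\mu/4>1/2$. Hence one component $K$ has $c:=c_1$ with $c^2\ge \sum c_i^2-(\sum_{i\ge2}c_i)^2\ge \sum c_i^2 - (1-c)^2$. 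Solving $c^2+(1-c)^2\ge d-\mu/2$ gives $c\ge \frac12+\sqrt{(d-\frac12-\mu/2)/2}$, which is the bound in (i) with room to spare.

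Let $V_0$ be the union of the clusters of $K$. Only edges of $L$ lying in irregular, sparse or exceptional pairs can cross from $V_0$ to the rest, so $e_L(V_0,V\setminus V_0)<\mu n^2/4$. Moreover $e(L[V_0])\ge e(L)-e(L[V\setminus V_0])-\mu n^2/4\ge (d-\mu)\frac{n^2}{2}-\frac{(n-|V_0|)^2}{2}$. This gives (i) and (ii) for $V_0$, and they are preserved by the small cleaning in Step 2.

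\emph{Step 2 (robustness).} The component $K$ is connected. We must also make it non-bipartite, so that walks of a fixed odd length $\ell$ connect every pair of clusters. If $K$ were bipartite, its edge density would be at most $c^2/2$, whereas $K$ must carry essentially all $(d-\mu)/2>1/4$ of the reduced density. Combining this with $c\le1$ gives $c^2/2\ge (d-\mu)-(1-c)^2$, and a short calculation shows this is impossible when $d>1/2+\mu$. So $K$ contains an odd cycle. Since $K$ has at most $M(\varepsilon)$ vertices, some odd $\ell_0\le 2M+1$ satisfies: every ordered pair of clusters of $K$ is joined by a walk of length exactly $\ell_0$. Such walks persist when the length is increased by $2$, so we may take one common odd $\ell$ for all $v$.

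Next delete from $V_0$ the $O(\sqrt\varepsilon n)$ vertices with low degree into their regular pairs, and let $R_v$ be the induced graph on what remains. The counting lemma applied along the walks of length $\ell$ in $K$ then yields, for any $x,y\in V(R_v)$, at least $\beta n^{\ell-1}$ $x$-$y$ paths of length $\ell$, with $\beta$ depending only on $\varepsilon$, $d$, $\ell$ and $M$. This is (iii).

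\emph{Main obstacle.} The delicate point is the endpoints in the counting step. An arbitrary vertex $x$ must have linearly many neighbours in some cluster adjacent in $K$, and this is exactly what the low-degree cleaning guarantees. We must also ensure that the cleaning does not destroy the bounds in (i) and (ii); the slack between $\mu$ and $\mu/2$ absorbs this loss. Finally, taking the minimum of $\beta$ and a common $\ell$ over the boundedly many possible reduced graphs makes both constants independent of $v$.
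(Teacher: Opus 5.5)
Your proposal is correct, but it takes a genuinely different route from the paper.

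\textbf{The paper's route.} The paper never applies the regularity lemma to $L_v$. It takes the largest $t$ for which $V$ splits as $V_1\sqcup\cdots\sqcup V_t$ into parts of size at least $\mu n/4$ with at most $t(\mu n/4)^2$ crossing edges of $L$. It shows $|V_1|>n/2$, since otherwise $e(L)<d\,n^2/2$. Solving the same quadratic as you then gives $|V_1|\ge \frac12\bigl(1+\sqrt{2d-1-\mu}\bigr)n$. Finally it greedily strips from $V_1$ the vertices of degree less than $\mu n/4$, and uses the maximality of $t$ to show that fewer than $\mu n/4$ vertices are stripped. So (i) and (ii) come from an elementary sparse-cut argument, and the robustness (iii) is simply deferred to \cite{3hamcycle}.

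\textbf{What your route buys and costs.} You replace the maximal-partition trick by the largest component $K$ of a reduced graph. In exchange you prove (iii) self-containedly:
\begin{itemize}
\item the bound $c^2/2+(1-c)^2\le 1/2<d-\mu$ on $[1/2,1]$ makes explicit why $K$ has an odd cycle, and hence why a single odd $\ell$ works;
\item padding walks by $2$ makes the uniformity of $\ell$ and $\beta$ over all $v$ (indeed over all $H$) transparent.
\end{itemize}
The price is that $\beta$ and $\ell$ depend on $\mu$ through the regularity lemma, so they are of tower type and $\ell$ can be huge. The paper's argument for (i) and (ii) is elementary, but it leaves (iii) to the earlier work.

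\textbf{Two points to tighten.}
\begin{itemize}
\item Take $K$ to be the \emph{largest} component and note that $c\ge \sum_i c_i^2>1/2$, since $\sum_i c_i^2\le \max_i c_i\sum_i c_i\le \max_i c_i$. This is what justifies taking the larger root of $c^2+(1-c)^2\ge d-\mu/2$.
\item In the counting step, each vertex $x$ is only guaranteed a linear neighbourhood in one fixed neighbouring cluster $C'_x$ of its own cluster; that is what the cleaning gives. So the walks you need are of length $\ell-2$ from $C'_x$ to $C'_y$, not of length $\ell$ between the clusters containing $x$ and $y$. Setting $\ell=\ell_0+2$ handles this, because every ordered pair of clusters of $K$ is joined by a walk of every odd length at least $\ell_0$.
\end{itemize}
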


Before we prove~\cref{prop:robust_alt}, note that~\cref{prop:robust} follows immediately by substituting $d = \frac{3}{4} + \alpha$ and  taking, say, $\mu = \alpha/100$, so that  $ \sqrt{1/8 + \alpha/2 - \mu/2} > \sqrt{1/8}+\alpha/2$ and $\mu < \alpha/2$.
We will now prove the stronger proposition.
\begin{proof}[Proof of~\cref{prop:robust_alt}]
The proof follows the exact same approach as~\cite[Prop.~2.3]{3hamcycle}. The only major deviations in our proof are when proving our induced subgraphs $R_v$ satisfy properties (i) and (ii). Hence we will show that induced subgraphs $R_v$ exist that satisfy properties (i) and (ii) and defer property (iii) to the proof of \cite[Prop.~2.3]{3hamcycle} for the interested reader.

    Since the proof will be the exact same for every $v \in V(H)$, we fix an arbitrary $v \in V(H)$ in what follows, writing $L$ for $L_v$ and  $R$ for $R_v$. 
  Fix $t \in \mathbb{N}$ to be the largest integer for which there exists a partition $V_1 \sqcup \cdots \sqcup V_t = V(H)$ with \begin{itemize}
        \item[(a)] $|V_1| \geq \cdots \geq |V_t| \geq \mu n/4$ and 
        \item[(b)]  $\sum_{1 \leq i < j \leq t}e_L(V_i, V_j) \leq t\left(\mu n/4\right)^2$.
    \end{itemize} 
    Clearly, $t \geq 1$ as the trivial partition $V_1 = V$ satisfies (a) and (b). 
    From (a) we infer that $t \leq 4/\mu$, and hence using (b) we have $$\sum_{1 \leq i < j \leq t} e_L(V_i, V_j) < \frac{\mu n^2}{4}.$$ 
    Let $\eta \in (0, 1]$ such that $$|V_1| = \eta n.$$ If $\eta \leq 1/2$, then we have that \begin{align*}
        \delta_1(H) \leq e(L) = \sum_{i=1}^t e_L(V_i) + \sum_{1 \leq i < j \leq t} e_L(V_i, V_j)  
        & <  \sum_{i=1}^t \frac{|V_i|^2}{2} + \frac{\mu n^2}{4} 
        \\& \leq \frac{n}{2}\sum_{i=1}^t\frac{|V_i|}{2} +\frac{\mu n^2}{4} 
       \\ &= \left(\frac{1+\mu}{2}  \right)\frac{n^2}{2} < d\frac{n^2}{2}, 
    \end{align*} contradicting our assumption on $\delta_1(H)$. Hence we can assume $\eta > 1/2$. Observe that $$\frac{\eta^2n^2}{2} \geq e_L(V_1) \ge e(L) - \frac{(n - |V_1|)^2}{2} - \frac{\mu n^2}{4} \geq \left(d - (1 - \eta)^2 - \tfrac{\mu}{2}\right)\frac{n^2}{2}.$$
    Rearranging, we have 
    \[ \eta^2 - \eta -\tfrac{1}{2}\left(d-1-\tfrac{\mu}{2}\right) \ge 0\]
    and so
    $$\Bigg(\eta - \frac{1}{2}\left(1+\sqrt{2d-1-\mu}\right)\Bigg)\Bigg(\eta   - \frac{1}{2}\left(1-\sqrt{2d-1-\mu}\right)\Bigg) \geq 0.$$ 
 Since $\eta > 1/2$, we must therefore have $\eta \ge \frac{1}{2}\left(1+\sqrt{2d-1-\mu}\right)$, and in particular \[|V_1| = \eta n \geq \left(1+\sqrt{2d-1-\mu}\right)\frac{n}{2}.\]
 Note also that $\mu + 1< 2d$. One can then calculate that  \[
 e_L(V_1) \ge \left(\frac{d}{2} - \frac{\mu}{4} + \frac{1}{2}\sqrt{2d - 1 - \mu}\right)\cdot\frac{n^2}{2} \ge \frac{\mu n^2}{4}.\]
 
    Let $W = \{w_1, \ldots, w_m\} \subseteq V_1$ be a maximal (ordered) subset of $V_1$ such that for every $i \in [m]$ we have $|N_L(w_i) \cap (V_1\setminus \{w_1, \ldots, w_{i-1}\})| < \mu n/4$. Since $e_L(V_1) > \frac{\mu n^2}{4}$, we have $V_1 \setminus W \neq \emptyset$.
    Moreover, $V_1\setminus W$ induces a subgraph of minimum degree at least $\mu n/4$ in $L$. 
    Set $U := V_1 \setminus W$ and $R = L[U]$. We now show that $R$ is our desired induced subgraph.  

    We claim $|W| < \mu n/4$. 
    Suppose for a contradiction that there exists a subset $W' = \{w_1, \ldots, w_{\lceil  \mu n/4 \rceil}\} \subseteq W$.
    Then we can replace $V_1$ in the partition $V_1 \sqcup \cdots \sqcup V_t = V$ by $W' \sqcup (V_1\setminus W')$ and obtain a partition into $t+1$ parts. 
    Observe that $$|V_1\setminus W'| \geq |V_1\setminus W| = |U| > \delta(R) \geq \mu n/4$$ and the ordering of the vertices in $W$ yields that $$e_L(W', V_1 \setminus W') \leq \sum_{w_i \in W'}|N_L(w_i) \cap (V_1\setminus \{w_1, \ldots, w_{i-1}\})| < \frac{\mu n}{4} |W'| \leq \left(\frac{\mu n}{4}\right)^2.$$ 
    Therefore the partition $W' \sqcup (V_1\setminus W') \sqcup V_2 \sqcup \cdots \sqcup V_t$  satisfies (a) and (b), contradicting that $t$ was chosen maximally.
    Hence $|W| < \mu n/4$.

    Properties (i) and (ii) of Proposition~\ref{prop:robust_alt} will now follow quickly. 
    Firstly, observe that since $|W| < \mu n/4$ that    \begin{align*}
       |U| = |V_1\setminus W| & = |V_1| - |W| \\ & > |V_1| - \frac{\mu}{4}n = \left(\eta - \frac{\mu}{4}\right)n > \left(\frac{1}{2}+\sqrt{\frac{2d-1-2\mu}{4}} \right)n,
    \end{align*} where the last inequality follows from $\eta \geq \frac{1}{2}(1 + \sqrt{2d - 1 - \mu})$. Hence $R$ satisfies Proposition~\ref{prop:robust_alt}(i). Secondly, by definition of $W$ and since $t \leq 4/\mu$ and $|W| < \mu n/4$, we have
    \begin{align*}
        e_L(U, V\setminus U) & = \sum_{i=2}^t e_L(U, V_i) + e_L(U, W) \\ & \leq \sum_{i=2}^t e_L(V_1, V_i) + \frac{\mu n}{4}  |W| 
        \\&< t\left(\frac{\mu n}{4}\right)^2 + \left(\frac{\mu n}{4}\right)^2 \le \left(\frac{\mu}{2}+ \frac{\mu^2}{8}\right)\frac{n^2}{2}.
    \end{align*} Since $\mu < d-\frac{1}{2} < 1$, this ensures $R$ satisfies the first inequality of Proposition~\ref{prop:robust_alt}(ii).
    The second inequality of Proposition~\ref{prop:robust_alt}(ii) is seen as follows \begin{align*} e(R) = e(L) - e_L(U, V\setminus U) - e_L(V\setminus U) & 
    \geq d\frac{n^2}{2} - \mu \frac{n^2}{2} - \frac{(n - |U|)^2}{2}.\end{align*}
\end{proof}

\end{document}